\newcommand{\printname}[1] {}
\newtheorem{theorem}{Theorem}[section]
\newtheorem{mtheorem}{Main Theorem}
\newtheorem{mmtheorem}{Main Theorem}
\newtheorem{proposition}[theorem]{Proposition}
\newtheorem{lemma}[theorem]{Lemma}
\newtheorem{corollary}[theorem]{Corollary}
\newtheorem{definition}[theorem]{Definition}
\newtheorem{example}[theorem]{Example}
\theoremstyle{remark}
\newtheorem{remark}{Remark}
\newcommand{\rmap}{\longrightarrow}
\begin{document}
\title{A Normal Form Theorem around Symplectic Leaves}
\author{Marius Crainic}
\address{Depart. of Math., Utrecht University, 3508 TA Utrecht,
The Netherlands}
\email{m.crainic@uu.nl}
\author{Ioan M\v arcu\c t}
\address{Depart. of Math., Utrecht University, 3508 TA Utrecht,
The Netherlands}
\email{i.t.marcut@uu.nl}
\begin{abstract}
We prove the Poisson geometric version of the Local Reeb Stability
(from foliation theory) and of the Slice Theorem (from equivariant geometry), which is also
a generalization of Conn's linearization theorem.  % Conn's linearization theorem corresponds to the case of one-point leaves.
\end{abstract}
%\date{October 2008}
%%% ----------------------------------------------------------------------
\maketitle
%%% ----------------------------------------------------------------------

\setcounter{tocdepth}{1}
%\tableofcontents

%%%%%%%%%%%%%%%%%%%%%%%%%%%%%%%%%%%%%%%%%%%%%%%%%%
%%%%%%%%%%%%%%%%%%%%%%%%%%%%%%%%%%%%%%%%%%%%%%%%%%
%%%%%%%%%%%%%%%%%%%%%%%%%%%%%%%%%%%%%%%%%%%%%%%%%%
%%%%%%%%%%%%%%%%%%%%%%%%%%%%%%%%%%%%%%%%%%%%%%%%%%
%%%%%%%%%%%%%%%%%%%%%%%%%%%%%%%%%%%%%%%%%%%%%%%%%%
%%%%%%%%%%%%%%%%%%%%%%%%%%%%%%%%%%%%%%%%%%%%%%%%%%
%%%%%%%%%%%%%%%%%%%%%%%%%%%%%%%%%%%%%%%%%%%%%%%%%%
%%%%%%%%%%%%%%%%%%%%%%%%%%%%%%%%%%%%%%%%%%%%%%%%%%
%%%%%%%%%%%%%%%%%%%%%%%%%%%%%%%%%%%%%%%%%%%%%%%%%%
\section*{Introduction}
%%%%%%%%%%%%%%%%%%%%%%%%%%%%%%%%%%%%%%%%%%%%%%%%%%
%%%%%%%%%%%%%%%%%%%%%%%%%%%%%%%%%%%%%%%%%%%%%%%%%%
%%%%%%%%%%%%%%%%%%%%%%%%%%%%%%%%%%%%%%%%%%%%%%%%%%
%%%%%%%%%%%%%%%%%%%%%%%%%%%%%%%%%%%%%%%%%%%%%%%%%%
%%%%%%%%%%%%%%%%%%%%%%%%%%%%%%%%%%%%%%%%%%%%%%%%%%
%%%%%%%%%%%%%%%%%%%%%%%%%%%%%%%%%%%%%%%%%%%%%%%%%%
%%%%%%%%%%%%%%%%%%%%%%%%%%%%%%%%%%%%%%%%%%%%%%%%%%
%%%%%%%%%%%%%%%%%%%%%%%%%%%%%%%%%%%%%%%%%%%%%%%%%%
%%%%%%%%%%%%%%%%%%%%%%%%%%%%%%%%%%%%%%%%%%%%%%%%%%

Recall that a \textbf{Poisson structure} on a manifold $M$ is a Lie bracket $\{\cdot,\cdot\}$ on the space $C^{\infty}(M)$ of smooth functions
on $M$ which acts as a derivation in each entry
\[\{fg,h\}=f\{g,h\}+\{f,h\}g, \quad (\forall) f,g,h \in C^{\infty}(M).\]
A Poisson structure can be given also by a bivector $\pi\in\mathfrak{X}^2(M)$, involutive with respect to the Schouten bracket, i.e.\
$[\pi,\pi]=0$; one has:
\[\langle\pi,df\wedge dg\rangle=\{f,g\},\quad (\forall) f,g \in C^{\infty}(M).\]
To each function $f\in C^{\infty}(M)$ one assigns the Hamiltonian vector field
\[X_f=\{f,\cdot\} \in\mathfrak{X}(M).\]
The flows of the Hamiltonian vector fields give a partition of $M$ into \textbf{symplectic leaves}; they carry
a canonical smooth structure, which makes them into regular immersed submanifolds, whose tangent spaces are spanned by
the Hamiltonian vector fields; each leaf $S$ is a symplectic manifold, with the symplectic structure:
\[\omega_S(X_f,X_g)=\{f,g\}.\]
In this paper we prove a normal form theorem around symplectic leaves, which generalizes Conn's linearization theorem \cite{Conn} (for 1-point
leaves) and is a Poisson geometric analogue of the local Reeb stability from foliation theory and of the slice theorem from group actions. We
will use the Poisson homotopy bundle of a %symplectic
leaf $S$, which is the analogue of to the holonomy cover from foliation theory
\[ P_x\rmap S\]
($x\in S$ a base point), whose ``structural group'' is the Poisson homotopy group $G_x$.

\begin{mtheorem} Let $(M, \pi)$ be a Poisson manifold and let $S$ be
a compact leaf. If the Poisson homotopy bundle over $S$ is a smooth compact manifold with vanishing second DeRham cohomology group, then, in a
neighborhood of $S$, $\pi$ is Poisson diffeomorphic to its first order model around $S$.
\end{mtheorem}

A detailed statement and reformulations appear in section \ref{The main theorem again: reformulations and some examples}. The proof uses ideas
similar to the ones in \cite{CrFe-Conn}: a Moser-type argument reduces the problem to a cohomological one (Theorem \ref{theorem-1}); a Van Est
argument and averaging reduces the cohomological problem to an integrability problem (Theorem \ref{TheoremRedInt}) which, in turn, can be
reduced to the existence of special symplectic realization (Theorem \ref{theorem-step-2.2}); the symplectic realization is then built by working
on the Banach manifold of cotangent paths (subsection \ref{Step 2.3: the needed symplectic realization}). For an outline of the paper, we advise
the reader to go through the introductory sentence(s) of each section.

There have been various attempts to generalize Conn's
linearization theorem to arbitrary symplectic leaves. While the desired conclusion was clear (the same as in our theorem), the assumptions (except for the compactness of $S$)
are more subtle. Of course, as for any (first order) local form result, one looks for assumptions on the first
jet of $\pi$ along $S$. Here are a few remarks on the assumptions.\\

$1.$ {\it Compactness assumptions}. It was originally believed that such a result could follow by first applying Conn's theorem to a transversal
to $S$. Hence the expected assumption was, next to the compactness of $S$, that the isotropy Lie algebra $\mathfrak{g}_x$ ($x\in S$) is
semi-simple of compact type. The failure of such a result was already pointed out in \cite{WadeLent}. A refined conjecture was made in
\cite{CrFe0}- revealing the compactness assumptions that appear in our theorem. The idea is the following: while the condition that
$\mathfrak{g}_x$ is semi-simple of compact type is equivalent to the fact that all (connected) Lie groups integrating the Lie algebra
$\mathfrak{g}_x$ are compact, one should require the compactness (and smoothness) of only one group associated to $\mathfrak{g}_x$- the Poisson
homotopy group $G_x$. This is an important difference because
\begin{itemize}
\item our theorem may be applied even when $\mathfrak{g}_x$ is abelian. %(and $G_x$ is a finite cover of a torus).
\item actually, under the assumptions of the theorem, $\mathfrak{g}_x$ can be semi-simple of compact type only when the leaf is a point!
\end{itemize}

$2.$ {\it Vanishing of $H^2(P_x)$}. 
%Requiring compactness of the Poisson homotopy bundle is natural also when drawing an analogy with the local
%Reeb stability theorem or the slice theorem. However, compactness alone is not enough (see Example \ref{main-example}). The subtle condition is
%$H^2(P_x)=0$, and its appearance is completely new in the context of normal forms:
The compactness condition on the Poisson homotopy bundle is natural also when drawing an analogy with other local normal form results like local
Reeb stability or the slice theorem. However, compactness alone is not enough (see Example \ref{main-example}). The subtle condition is
$H^2(P_x)=0$ and its appearance is completely new in the context of normal forms:

\begin{itemize}
\item In Conn's theorem, it is not visible (it is automatically satisfied!).
\item In the classical cases (foliations, actions) such a condition is not needed.
\end{itemize}
What happens is that the vanishing condition is related to integrability phenomena \cite{CrFe1, CrFe2}. In contrast with the case of foliations
and of group actions, Poisson manifolds give rise to Lie algebroids that may fail to be integrable. To clarify the role of this assumption, we mention
here:

{\it It implies integrability.}  The main role of this assumption is that it forces the Poisson manifold to be (Hausdorff) integrable around the
leaf. Actually, under such an integrability assumption, the normal form is much easier to establish, and the vanishing condition is not needed-
see our Proposition \ref{main-cor}, which can also be deduced from Zung's linearization theorem \cite{Zung}. Note however that such an
integrability condition refers to the germ of $\pi$ around $S$ (and not the first order jet, as desired!); and, of course, Conn's theorem does
not make such an assumption.

{\it It implies vanishing of the second Poisson cohomology.} Next to integrability, the vanishing condition also implies the vanishing of the
second Poisson cohomology group $H^2_{\pi}(U)$ (of arbitrarily small neighborhoods $U$ of $S$)- which is known to be relevant to infinitesimal
deformations (see e.g. \cite{CrFe0}). We would like to point out that the use of $H^2_{\pi}(U)=0$ only simplifies our argument but is not essential.
A careful analysis shows that one only needs a certain class in $H^2_{\pi}(U)$ to vanish, and this can be shown using only integrability. 
This is explained at the end of subsection \ref{Step 2.1: Reduction to integrability}, when concluding the proof of Proposition
\ref{main-cor} mentioned above.\\

\vskip 5pt
\noindent \textbf{Acknowledgments.} We would like to thank Rui Loja Fernandes, David Martinez Torres and Ezra Getzler for their very useful comments.
This research was supported by the NWO Vidi Project ``Poisson topology''.

%%%%%%%%%%%%%%%%%%%%%%%%%%%%%%%%%%%%%%%%%%%%%%%%%%
%%%%%%%%%%%%%%%%%%%%%%%%%%%%%%%%%%%%%%%%%%%%%%%%%%
%%%%%%%%%%%%%%%%%%%%%%%%%%%%%%%%%%%%%%%%%%%%%%%%%%
%%%%%%%%%%%%%%%%%%%%%%%%%%%%%%%%%%%%%%%%%%%%%%%%%%
%%%%%%%%%%%%%%%%%%%%%%%%%%%%%%%%%%%%%%%%%%%%%%%%%%
%%%%%%%%%%%%%%%%%%%%%%%%%%%%%%%%%%%%%%%%%%%%%%%%%%
%%%%%%%%%%%%%%%%%%%%%%%%%%%%%%%%%%%%%%%%%%%%%%%%%%
%%%%%%%%%%%%%%%%%%%%%%%%%%%%%%%%%%%%%%%%%%%%%%%%%%
\section{A more detailed introduction}
%%%%%%%%%%%%%%%%%%%%%%%%%%%%%%%%%%%%%%%%%%%%%%%%%%
%%%%%%%%%%%%%%%%%%%%%%%%%%%%%%%%%%%%%%%%%%%%%%%%%%
%%%%%%%%%%%%%%%%%%%%%%%%%%%%%%%%%%%%%%%%%%%%%%%%%%
%%%%%%%%%%%%%%%%%%%%%%%%%%%%%%%%%%%%%%%%%%%%%%%%%%
%%%%%%%%%%%%%%%%%%%%%%%%%%%%%%%%%%%%%%%%%%%%%%%%%%
%%%%%%%%%%%%%%%%%%%%%%%%%%%%%%%%%%%%%%%%%%%%%%%%%%
%%%%%%%%%%%%%%%%%%%%%%%%%%%%%%%%%%%%%%%%%%%%%%%%%%
%%%%%%%%%%%%%%%%%%%%%%%%%%%%%%%%%%%%%%%%%%%%%%%%%%
%%%%%%%%%%%%%%%%%%%%%%%%%%%%%%%%%%%%%%%%%%%%%%%%%%

In this section we give more details on the statement of the main theorem. We start by recalling some classical normal form theorems in
differential geometry. Then we discuss the local model associated to a principal bundle over a symplectic manifold. Next we describe the Poisson
homotopy bundle in detail and we finish the section with an overview of the notion of integrability of Poisson manifolds.

%%%%%%%%%%%%%%%%%%%%%%%%%%%%%%%%%%%%%%%%%%%%%%%%%%
%%%%%%%%%%%%%%%%%%%%%%%%%%%%%%%%%%%%%%%%%%%%%%%%%%
%%%%%%%%%%%%%%%%%%%%%%%%%%%%%%%%%%%%%%%%%%%%%%%%%%
%%%%%%%%%%%%%%%%%%%%%%%%%%%%%%%%%%%%%%%%%%%%%%%%%%
%%%%%%%%%%%%%%%%%%%%%%%%%%%%%%%%%%%%%%%%%%%%%%%%%%
%%%%%%%%%%%%%%%%%%%%%%%%%%%%%%%%%%%%%%%%%%%%%%%%%%
\subsection{The Slice Theorem}
\label{The Slice Theorem}
%%%%%%%%%%%%%%%%%%%%%%%%%%%%%%%%%%%%%%%%%%%%%%%%%%
%%%%%%%%%%%%%%%%%%%%%%%%%%%%%%%%%%%%%%%%%%%%%%%%%%
%%%%%%%%%%%%%%%%%%%%%%%%%%%%%%%%%%%%%%%%%%%%%%%%%%
%%%%%%%%%%%%%%%%%%%%%%%%%%%%%%%%%%%%%%%%%%%%%%%%%%
%%%%%%%%%%%%%%%%%%%%%%%%%%%%%%%%%%%%%%%%%%%%%%%%%%
%%%%%%%%%%%%%%%%%%%%%%%%%%%%%%%%%%%%%%%%%%%%%%%%%%

Let $G$ be Lie group acting on a manifold $M$, $x\in M$ and denote by $\mathcal{O}$ the orbit through $x$. The Slice Theorem (\cite{DK}) gives a
normal form for the $G$-manifold $M$ around $\mathcal{O}$. It is built out of the isotropy group $G_x$ at $x$ and its canonical representation
$V_x= T_xM/T_x\mathcal{O}$. Explicitly, the local model is:
\[ G\times_{G_x} V_x= (G\times V_x)/G_{x}\]
which is a $G$-manifold and admits $\mathcal{O}$ as the orbit corresponding to $0\in V_x$.

\begin{theorem}
If $G$ is compact, then a $G$-invariant neighborhood of $\mathcal{O}$ in $M$ is diffeomorphic, as a $G$-manifold, to a $G$-invariant
neighborhood of $\mathcal{O}$ in $G\times_{G_x} V_x$.
\end{theorem}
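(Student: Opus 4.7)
The plan is to use an averaging argument combined with the equivariant tubular neighborhood theorem.

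First I would use compactness of $G$ to produce, by Haar-measure averaging of any background Riemannian metric, a $G$-invariant Riemannian metric $g$ on $M$. Since $G$ is compact and the orbit $\mathcal{O} = G\cdot x$ is the image of the smooth map $G \to M$, $g \mapsto g\cdot x$ (which factors through $G/G_x$), the orbit is a compact embedded $G$-invariant submanifold of $M$.

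Next I would introduce the normal bundle $N\mathcal{O} \to \mathcal{O}$ defined using $g$, which comes equipped with a natural $G$-action lifting the $G$-action on $\mathcal{O}$. The fiber $N_x\mathcal{O} \subset T_xM$ is the $g$-orthogonal complement of $T_x\mathcal{O}$; it is $G_x$-invariant, and the natural projection identifies it $G_x$-equivariantly with $V_x = T_xM/T_x\mathcal{O}$. A standard identification then yields a $G$-equivariant diffeomorphism
\[
N\mathcal{O} \;\cong\; G \times_{G_x} N_x\mathcal{O} \;\cong\; G \times_{G_x} V_x ,
\]
sending the zero section to $\mathcal{O}$.

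Then I would invoke the Riemannian exponential map $\exp\colon N\mathcal{O} \to M$. Because $g$ is $G$-invariant, $\exp$ intertwines the $G$-action on $N\mathcal{O}$ with that on $M$. Along the zero section $\mathcal{O}$, the differential of $\exp$ is the identity on the tangent directions and the natural isomorphism $N_y\mathcal{O} \to T_yM/T_y\mathcal{O}$ on normal directions, hence is an isomorphism. Using compactness of $\mathcal{O}$ one finds an open neighborhood $U$ of the zero section on which $\exp$ is a diffeomorphism onto an open neighborhood of $\mathcal{O}$ in $M$. Averaging (or intersecting over $G$) the open set $U$ with its $G$-translates produces a $G$-invariant open neighborhood on which $\exp$ is still a diffeomorphism, giving the required $G$-equivariant local model.

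The main technical point, and the step that really uses compactness of $G$, is the existence of the $G$-invariant metric and the extraction of a $G$-invariant open neighborhood of $\mathcal{O}$ on which $\exp$ is a diffeomorphism; without compactness there is no reason the domain of injectivity of $\exp$ can be chosen $G$-invariant, nor that the averaging procedure converges. Everything else (the isomorphism $N\mathcal{O} \cong G \times_{G_x} V_x$ and the equivariance of $\exp$) is a formal consequence of the $G$-invariance of $g$.
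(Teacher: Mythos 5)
This theorem is recalled in the paper as classical background, with a citation to Duistermaat--Kolk; the paper gives no proof of its own, so there is nothing internal to compare against. Your argument is the standard proof of the Slice Theorem (equivariant tubular neighborhood built from an averaged invariant metric) and is correct. The only step worth tightening is the last one: the intersection $\bigcap_{g\in G} gU$ of an open set with all its $G$-translates is not obviously open; either replace $U$ by the $\varepsilon$-disk bundle of the invariant metric (which is $G$-invariant by construction and, for small $\varepsilon$, lies in the injectivity domain by compactness of $\mathcal{O}$), or apply the tube lemma to the compact group $G$ to extract a $G$-invariant open neighborhood of the zero section inside $U$.
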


It is instructive to think of the building pieces of the local model as a triple $(G_x, G\rmap \mathcal{O}, V_x)$ consisting of the Lie group
$G_x$, the principal $G_x$-bundle $G$ over $\mathcal{O}$ and a representation $V_x$ of $G_x$. This triple should be thought of as the first
order data (first jet) along $\mathcal{O}$ associated to the $G$-manifold $M$, while of the associated local model as the first order
approximation.

%%%%%%%%%%%%%%%%%%%%%%%%%%%%%%%%%%%%%%%%%%%%%%%%%%
%%%%%%%%%%%%%%%%%%%%%%%%%%%%%%%%%%%%%%%%%%%%%%%%%%
%%%%%%%%%%%%%%%%%%%%%%%%%%%%%%%%%%%%%%%%%%%%%%%%%%
%%%%%%%%%%%%%%%%%%%%%%%%%%%%%%%%%%%%%%%%%%%%%%%%%%
%%%%%%%%%%%%%%%%%%%%%%%%%%%%%%%%%%%%%%%%%%%%%%%%%%
%%%%%%%%%%%%%%%%%%%%%%%%%%%%%%%%%%%%%%%%%%%%%%%%%%
\subsection{Local Reeb stability}
%%%%%%%%%%%%%%%%%%%%%%%%%%%%%%%%%%%%%%%%%%%%%%%%%%
%%%%%%%%%%%%%%%%%%%%%%%%%%%%%%%%%%%%%%%%%%%%%%%%%%
%%%%%%%%%%%%%%%%%%%%%%%%%%%%%%%%%%%%%%%%%%%%%%%%%%
%%%%%%%%%%%%%%%%%%%%%%%%%%%%%%%%%%%%%%%%%%%%%%%%%%
%%%%%%%%%%%%%%%%%%%%%%%%%%%%%%%%%%%%%%%%%%%%%%%%%%
%%%%%%%%%%%%%%%%%%%%%%%%%%%%%%%%%%%%%%%%%%%%%%%%%%

Let $\mathcal{F}$ be a foliation on a manifold $M$, $x\in M$ and denote by $L$ the leaf through $x$. The Local Reeb Stability Theorem
(\cite{MM}) gives a normal form for the foliation around $L$ (we state below a weaker version). Denote by $\widetilde{L}$ the universal cover of
$L$, and consider the linear holonomy representation of $\Gamma_x:=\pi_1(L,x)$ on $N_x= T_xM/T_xL$. The local model is
$\widetilde{L}\times_{\Gamma_x} N_x$
 with leaves $\widetilde{L}\times_{\Gamma_x} (\Gamma_xv)$ for $v\in N_x$; $L$ corresponds to $v=0$.

\begin{theorem}
If $L$ is compact and $\Gamma_x$ is finite, then a saturated neighborhood of $L$ in $M$ is diffeomorphic, as a foliated manifold, to a
neighborhood of $L$ in $\widetilde{L}\times_{\Gamma_x} N_x$.
\end{theorem}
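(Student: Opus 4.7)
The plan is to produce the comparison diffeomorphism by rigidifying the holonomy on a transversal and then ``spreading'' it along the universal cover. Concretely, pick a base point $x\in L$ and a smooth transversal $T\subset M$ at $x$, so that $T_x T$ is naturally identified with $N_x=T_xM/T_xL$. The holonomy of $\mathcal{F}$ along loops at $x$ produces germs of diffeomorphisms of $(T,x)$, defining a homomorphism $\Gamma_x\to \mathrm{Diff}_x(T)$. Since $\Gamma_x$ is finite, standard arguments let me realize the germ-action as a genuine smooth $\Gamma_x$-action on a $\Gamma_x$-invariant neighborhood of $x$ in $T$.

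Next I would linearize this action. Because $\Gamma_x$ is finite and fixes $x$, Bochner's averaging trick gives a $\Gamma_x$-equivariant diffeomorphism from a $\Gamma_x$-invariant neighborhood of $x$ in $T$ onto a $\Gamma_x$-invariant neighborhood of $0$ in $N_x$, identifying the holonomy action with the canonical linear one. After this step I may as well assume $T$ is a $\Gamma_x$-invariant open neighborhood of $0$ in $N_x$ endowed with the linear holonomy action.

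Now I would build the comparison map. Fix a lift $\widetilde{x}\in\widetilde{L}$, and define
\[
\Phi:\widetilde{L}\times T\rmap M,\qquad \Phi(\widetilde{y},t)=\mathrm{hol}_{\gamma}(t),
\]
where $\gamma$ is the projection to $L$ of any path in $\widetilde{L}$ from $\widetilde{x}$ to $\widetilde{y}$ (well-defined because all such paths are homotopic rel endpoints, so their holonomies agree). By construction $\Phi$ carries the product foliation on $\widetilde{L}\times T$ (leaves $\widetilde{L}\times\{t\}$) to $\mathcal{F}$. The deck action of $\Gamma_x$ on $\widetilde{L}$ together with the holonomy action on $T$ make $\Phi$ equivariant for the diagonal $\Gamma_x$-action, so it descends to
\[
\bar{\Phi}:\widetilde{L}\times_{\Gamma_x}T\rmap M.
\]
At points of the zero section $\widetilde{L}\times_{\Gamma_x}\{0\}\cong L$ the differential of $\bar{\Phi}$ is easily seen to be an isomorphism (it is the identity in the $L$-direction and identifies $N_x$ with its image in $T_xM$ in the transversal direction). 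Compactness of $L$, together with injectivity on the zero section, then upgrades $\bar{\Phi}$ from a local diffeomorphism along $L$ to a diffeomorphism between open neighborhoods of $L$ in the two spaces; since these neighborhoods can be chosen saturated (they are unions of nearby leaves by construction), this yields the desired foliated diffeomorphism.

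The main obstacle I anticipate is the passage from a germ-action to an honest action in a common neighborhood of $x$ and, simultaneously, the control needed to upgrade $\bar{\Phi}$ from a pointwise local diffeomorphism into a diffeomorphism on a full neighborhood of the compact leaf. Both are essentially ``finite $\Gamma_x$ plus compact $L$'' arguments (averaging to obtain $\Gamma_x$-invariant open sets and a tube-like uniformity along $L$), and they are the only places where the hypotheses are really used; once the honest $\Gamma_x$-action on $T$ and its Bochner linearization are in hand, the rest of the proof is formal.
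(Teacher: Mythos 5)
Your argument is correct and is essentially the standard proof of this version of local Reeb stability; the paper itself gives no proof, citing only \cite{MM}, and your route (holonomy germ action on a transversal, Bochner linearization using finiteness of $\Gamma_x$, holonomy transport $\Phi(\widetilde{y},t)=\mathrm{hol}_{\gamma}(t)$ descending to $\widetilde{L}\times_{\Gamma_x}N_x$, then the ``local diffeomorphism injective on a compact set'' lemma) is the one in that reference. The two technical points you flag are indeed the only delicate ones, and both are handled as you suggest: averaging $\frac{1}{|\Gamma_x|}\sum_g (d_xh_g)^{-1}\circ h_g$ turns the germ action into the linear one, and a finite cover of the compact $\widetilde{L}$ by chains of foliation charts gives a uniform transversal on which all the holonomy transports are defined; saturation of the image follows because $\Phi(\widetilde{L}\times\{t\})$ is both open and closed in the (connected) leaf through $t$.
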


Again, the local model is build out of a triple $(\Gamma_x, \widetilde{L}\rmap L, N_x)$, consisting of the discrete group $\Gamma_x$, the
principal $\Gamma_x$-bundle $\widetilde{L}$ and a representation $N_x$ of $\Gamma_x$. The triple should be thought of as the first order data
along $L$ associated to the foliated manifold $M$ and the local model as the first order approximation.

%%%%%%%%%%%%%%%%%%%%%%%%%%%%%%%%%%%%%%%%%%%%%%%%%%
%%%%%%%%%%%%%%%%%%%%%%%%%%%%%%%%%%%%%%%%%%%%%%%%%%
%%%%%%%%%%%%%%%%%%%%%%%%%%%%%%%%%%%%%%%%%%%%%%%%%%
%%%%%%%%%%%%%%%%%%%%%%%%%%%%%%%%%%%%%%%%%%%%%%%%%%
%%%%%%%%%%%%%%%%%%%%%%%%%%%%%%%%%%%%%%%%%%%%%%%%%%
%%%%%%%%%%%%%%%%%%%%%%%%%%%%%%%%%%%%%%%%%%%%%%%%%%
\subsection{Conn's Linearization Theorem}
\label{Conn's theorem}
%%%%%%%%%%%%%%%%%%%%%%%%%%%%%%%%%%%%%%%%%%%%%%%%%%
%%%%%%%%%%%%%%%%%%%%%%%%%%%%%%%%%%%%%%%%%%%%%%%%%%
%%%%%%%%%%%%%%%%%%%%%%%%%%%%%%%%%%%%%%%%%%%%%%%%%%
%%%%%%%%%%%%%%%%%%%%%%%%%%%%%%%%%%%%%%%%%%%%%%%%%%
%%%%%%%%%%%%%%%%%%%%%%%%%%%%%%%%%%%%%%%%%%%%%%%%%%
%%%%%%%%%%%%%%%%%%%%%%%%%%%%%%%%%%%%%%%%%%%%%%%%%%

Let $(M,\pi)$ be a Poisson manifold and $x\in M$ be a zero of $\pi$. Conn's theorem \cite{Conn} gives a normal form for $(M, \pi)$
near $x$, built out of the isotropy Lie algebra $\mathfrak{g}_x$. Recall that $\mathfrak{g}_x=
T^*_{x}M$ with the bracket:
\begin{equation}\label{isotr}
[d_xf,d_xg]=d_x \{ f,g\},\ \ \ \   f,g \in C^{\infty}(M).
\end{equation}
Conversely, there is a Poisson bracket $\pi_{\mathrm{lin}}$ on the dual $\mathfrak{g}^*$ of any Lie algebra:
\[\pi_{\mathrm{lin}}(X,Y)_{\xi}:=\langle\xi,[X,Y]\rangle, \ (\forall) \xi\in\mathfrak{g}^*, \ X,Y\in\mathfrak{g}=T^*_{\xi}\mathfrak{g}^*.\]
\begin{theorem}
If $\mathfrak{g}_x$ is semi-simple of compact type then a neighborhood of $x$ in $M$ is Poisson-diffeomorphic to a neighborhood of the origin in
$(\mathfrak{g}_{x}^{*},\pi_{\mathrm{lin}})$.
\end{theorem}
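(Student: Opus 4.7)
My plan is to follow the Moser--cohomology--integration--averaging scheme outlined in the introduction as the backbone of the main theorem, now applied in the simpler setting of a zero of $\pi$. First I choose a linear chart identifying a neighborhood of $x$ in $M$ with a neighborhood $U$ of the origin in $\mathfrak{g}_x^{*}=T_x^{*}M$, so that the first jet of $\pi$ at $0$ is $\pi_{\mathrm{lin}}$. Using the rescaling $\mu_s(y)=sy$ I form the one-parameter family
\[
\pi_s := s\cdot\mu_s^{*}\pi, \qquad s\in(0,1],
\]
which extends smoothly to $\pi_0=\pi_{\mathrm{lin}}$ at $s=0$ (because $\pi$ vanishes at the origin with linear part $\pi_{\mathrm{lin}}$) and whose members are genuine Poisson bivectors, since $[\pi_s,\pi_s]=s^{2}\mu_s^{*}[\pi,\pi]=0$.

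Next, I apply Moser's trick: seek a time-dependent vector field $X_s$, vanishing at the origin, whose flow $\psi_s$ satisfies $\psi_s^{*}\pi_s=\pi_{\mathrm{lin}}$ for all $s\in[0,1]$. Differentiating in $s$ produces the cohomological equation
\[
[X_s,\pi_s] = -\dot\pi_s
\]
in Schouten brackets; that $\dot\pi_s$ is a cocycle follows by differentiating $[\pi_s,\pi_s]=0$. Because $\pi-\pi_{\mathrm{lin}}$ vanishes to second order at the origin, so does $\dot\pi_s$, and any sufficiently controlled solution $X_s$ will vanish at $0$ and integrate to a flow on a uniform small neighborhood for $s\in[0,1]$; evaluating at $s=1$ gives the desired Poisson diffeomorphism.

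The heart of the argument is the vanishing, on some neighborhood $U$ of $0$, of the class of $\dot\pi_s$ in the second Poisson cohomology of each $\pi_s$. At $s=0$ this is the Poisson cohomology of $(\mathfrak{g}_x^{*},\pi_{\mathrm{lin}})$: I would integrate $\pi_{\mathrm{lin}}$ to the action groupoid $G\ltimes\mathfrak{g}_x^{*}$ of the coadjoint action of the simply connected integration $G$ of $\mathfrak{g}_x$, which is compact by the semi-simple-of-compact-type hypothesis, so that the groupoid is proper. A Van Est argument then identifies the relevant local Poisson cohomology with differentiable groupoid cohomology, which is killed in positive degrees by averaging against a bi-invariant Haar measure on $G$. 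For $s\in(0,1]$ the structure $\pi_s$ is diffeomorphic (via $\mu_s$) to a rescaling of $\pi$ which converges smoothly to $\pi_{\mathrm{lin}}$, so the vanishing extends to all $s$ by a perturbation argument on a common neighborhood.

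The principal obstacle is the classical loss-of-derivatives issue: a naive solution of $[X_s,\pi_s]=-\dot\pi_s$ generally costs a derivative, so the ODE defining $\psi_s$ need not admit a smooth solution on a uniform neighborhood. Conn's original proof handles this by a Nash--Moser iteration. Following the philosophy of this paper, I would instead build a \emph{tame} homotopy operator from averaging over the proper groupoid $G\ltimes\mathfrak{g}_x^{*}$; such averaging is bounded in every $C^{k}$ norm with no loss of derivatives, and with such estimates in hand Moser's trick closes by a Gronwall-type argument. Establishing these tame bounds --- ultimately a consequence of properness of the integrating groupoid together with Whitehead's lemmas for $\mathfrak{g}_x$ --- is the technical crux.
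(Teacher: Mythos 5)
Your setup (the dilation path $\pi_s=s\,\mu_s^{*}\pi$ joining $\pi$ to $\pi_{\mathrm{lin}}$, the Moser equation $[X_s,\pi_s]=-\dot\pi_s$, and the idea that properness of the integrating groupoid plus Van Est should supply the cohomological input) matches the skeleton of the paper's argument. But there is a genuine gap at the step you yourself flag as the crux, and the way you propose to close it is not the paper's way and is not substantiated. You plan to solve the cohomological equation separately for each $s$ and then control the resulting family by ``tame'' estimates for an averaging homotopy operator plus a Gronwall argument --- i.e.\ you re-import the hard-analysis machinery of Conn's original proof. The claim that averaging over the proper groupoid yields a homotopy operator bounded in every $C^{k}$ norm with no loss of derivatives is not established (the operators that actually arise in the Van Est argument involve inverting the anchor and integrating over $s$-fibers, and do lose derivatives in the naive sense), and nothing in your proposal produces a family $X_s$ that is smooth in $s$, defined on a common neighborhood, and vanishing at the origin. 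The paper (Theorem \ref{theorem-1}, following \cite{CrFe-Conn}) avoids all estimates by an algebraic trick specific to the dilation path: one solves a \emph{single} equation $[\pi,X]=-\dot\pi_{1}$ using the genuine (not formal, not quantitative) vanishing of $H^{2}_{\pi}(U)$ on small $U$, then corrects $X$ so that its first jet at the origin vanishes --- this uses $H^{1}(\mathfrak{g}_x)=H^{1}(\mathfrak{g}_x;\mathfrak{g}_x^{*})=0$, i.e.\ Whitehead's lemmas, which enter here and not as ingredients of a tame estimate --- and then sets $X_s=s^{-1}\varphi_s(X)$. The vanishing of $j^{1}X$ at the origin makes this family smooth down to $s=0$, makes it vanish at $x$, and (by the tube lemma) makes its flow defined up to time $1$ on a neighborhood. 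No norms appear anywhere.

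The second gap is in how you obtain the cohomological vanishing itself. What the Moser step needs is $H^{2}_{\pi}(U)=0$ for arbitrarily small $U\ni x$ --- for $\pi$ itself, not for $\pi_{\mathrm{lin}}$. Your Van Est/averaging argument applies to the action groupoid $G\ltimes\mathfrak{g}_x^{*}$, which integrates $\pi_{\mathrm{lin}}$, and you then appeal to an unspecified ``perturbation argument'' to transfer the vanishing to $\pi_s$ for $s>0$; cohomology vanishing is not stable under perturbation in any obvious sense, and this is precisely where most of the work in the paper goes. One must prove that $(U,\pi|_{U})$ is itself integrable by a Hausdorff symplectic groupoid with compact, cohomologically $2$-connected $s$-fibers (Theorems \ref{TheoremRedInt} and \ref{theorem-step-2.2}, together with the construction of the symplectic realization from transversals in the Banach manifold of cotangent paths in subsection \ref{Step 2.3: the needed symplectic realization}); only then do properness and Van Est give $H^{2}_{\pi}(U)=0$. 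In the point-leaf case the fiber is $G(\mathfrak{g}_x)$, compact and $2$-connected, so the hypotheses are automatic --- but the integrability of $\pi$ near $x$ still has to be proved, and your proposal does not address it.
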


Again, the local data (the Lie algebra $\mathfrak{g}_x$) should be viewed as the first order data at $x$ associated to the Poisson manifold,
while the local model as the first order approximation. To make the analogy with the previous two theorems, we replace the Lie
algebra $\mathfrak{g}_x$ by $G_x:=G(\mathfrak{g}_x)$, the $1$-connected Lie group integrating $\mathfrak{g}_x$. The local data is then
\[ (G_x, G_x\rmap \{x\}, \mathfrak{g}_{x}^{*} )\]
and the local model is defined on $G_{x}\times_{G_x} \mathfrak{g}_{x}^{*}= \mathfrak{g}_{x}^{*}$. The most convincing argument for bringing
$G_x$ into the picture is the fact that the assumption of the theorem is equivalent to the fact that $G_x$ is compact.

%%%%%%%%%%%%%%%%%%%%%%%%%%%%%%%%%%%%%%%%%%%%%%%%%%
%%%%%%%%%%%%%%%%%%%%%%%%%%%%%%%%%%%%%%%%%%%%%%%%%%
%%%%%%%%%%%%%%%%%%%%%%%%%%%%%%%%%%%%%%%%%%%%%%%%%%
%%%%%%%%%%%%%%%%%%%%%%%%%%%%%%%%%%%%%%%%%%%%%%%%%%
%%%%%%%%%%%%%%%%%%%%%%%%%%%%%%%%%%%%%%%%%%%%%%%%%%
%%%%%%%%%%%%%%%%%%%%%%%%%%%%%%%%%%%%%%%%%%%%%%%%%%
\subsection{The local model}
\label{The local model}
%%%%%%%%%%%%%%%%%%%%%%%%%%%%%%%%%%%%%%%%%%%%%%%%%%
%%%%%%%%%%%%%%%%%%%%%%%%%%%%%%%%%%%%%%%%%%%%%%%%%%
%%%%%%%%%%%%%%%%%%%%%%%%%%%%%%%%%%%%%%%%%%%%%%%%%%
%%%%%%%%%%%%%%%%%%%%%%%%%%%%%%%%%%%%%%%%%%%%%%%%%%
%%%%%%%%%%%%%%%%%%%%%%%%%%%%%%%%%%%%%%%%%%%%%%%%%%
%%%%%%%%%%%%%%%%%%%%%%%%%%%%%%%%%%%%%%%%%%%%%%%%%%
In this subsection we explain the local model. The construction given below is standard in symplectic geometry and goes back to the local forms
of Hamiltonian spaces around the level sets of the moment map (cf. e.g. \cite{GS}) and also shows up in the work of Montgomery \cite{Montgomery}
(see also \cite{SymplecticFibrations}).

The starting data is a again a triple consisting of a symplectic manifold $(S,\omega_S)$, which will be our symplectic leaf, a principal
$G$-bundle $P$ over $S$, which will be the Poisson homotopy bundle and the coadjoint action of $G$ on $\mathfrak{g}^*$:
\[(G,P\rmap (S,\omega_S),\mathfrak{g}^*).\]
As before, $G$ acts diagonally on $P\times\mathfrak{g}^*$. As a manifold, the local model is:
\[ P\times_{G}\mathfrak{g}^*= (P\times \mathfrak{g}^*)/G.\]
To describe the Poisson structure, we choose a connection 1-form on $P$, $\theta\in \Omega^1(P, \mathfrak{g})$.
The $G$-equivariance of $\theta$ implies that the $1$-form $\tilde{\theta}$ on $P\times \mathfrak{g}^*$ defined by
\[\tilde{\theta}_{(p,\mu)}=\langle\mu,\theta_p\rangle\]
is $G$-invariant. Consider now the $G$-invariant 2-form
\[\Omega:=p^*(\omega_S)-d\tilde{\theta}\in \Omega^2(P\times \mathfrak{g}^*).\]
The open set $M\subset P\times \mathfrak{g}^*$ where it is non-degenerate contains $P\times\{0\}$, therefore $(M,\Omega)$ is a symplectic
manifold on which $G$ acts freely, in a Hamiltonian fashion, with moment map given by the second projection. Hence $N= M/G\subset P\times_G
\mathfrak{g}^*$ inherits a Poisson structure $\pi_N$. Notice that $S$ sits as a symplectic leaf in $(N, \pi_N)$:
\[(S,\omega_S)=(P\times \{0\},\Omega_{|P\times \{0\}})/G.\]
\begin{definition}\label{definition_Poisson_neighborhood}
A \textbf{Poisson neighborhood of $S$ in $P\times_{G}\mathfrak{g}^{*}$} is any Poisson structure of the type just described, defined on a
neighborhood $N$ of $S$.
\end{definition}
Note that different connections induce Poisson structures which have Poisson-diffeomorphic open neighborhoods of $S$. Also, intuitively, $\pi_N$
is constructed by combing the canonical Poisson structure on $\mathfrak{g}^*$ with the pullback of the symplectic structure $\omega_S$ to $P$.
For instance, if $P$ is trivial, using the canonical connection the resulting model is $(S, \omega)\times (\mathfrak{g}^*,\pi_{\mathrm{lin}})$.
%Moreover, by Weinstein 's Splitting Theorem \cite{Wein}, even if $\theta$ is more complicated, $S$ can be covered with opens in $N$ on which
%$\pi_N$ has this form.
Note also that, when $P$ is compact, one can find small enough Poisson neighborhoods of type:
\[ N= P\times_{G} V\]
with $V\subset \mathfrak{g}^*$ a $G$-invariant open containing $0$. Moreover, the resulting symplectic leaves do not depend, as manifolds, on
$\theta$. Denoting by $\mathcal{O}_{\xi}:=G\xi$, these are:
\[ P/G_{\xi}\cong P\times_{G}\mathcal{O}_{\xi} \subset P\times_{G} V, \ \ \  \xi\in V.\]

\begin{example}\label{torus-bundle}\rm \ To understand the role of the bundle $P$ it is instructive to look
at the case when $G= T^q$ is a $q$-torus. As a foliated manifold, the local model is:
\[ P\times_{G} \mathfrak{g}^*= S\times \mathbb{R}^q= \bigcup_{t} S\times \{t\} \ \ (t= (t_1, \ldots , t_q)\in \mathbb{R}^q).\]
%and the symplectic leaves, are just copies of $S$:
%\[ S\times \{t\}\ \ \ t= (t_1, \ldots , t_q)\in \mathbb{R}^q .\]
To complete the description of the local model as a Poisson manifold, we need to specify the symplectic forms $\omega_t$ on $S$- and this is
where $P$ comes in. Principal $T^q$-bundles are classified by $q$ integral cohomology classes $c_1, \ldots , c_q\in H^2(S)$; the choice of the
connection $\theta$ above corresponds to the choice of representatives $\omega_1, \ldots , \omega_q\in \Omega^2(S)$ and the resulting Poisson
structure corresponds to
\[ \omega_{t}= \omega_{S}+ t_1\omega_1+ \ldots + t_q\omega_q.\]
\end{example}

\begin{remark}\label{remark-Dirac}\rm
Dirac geometry (see \cite{BR} for the basic definitions) provides further insight into our construction. Recall that one of the main features of
Dirac structures is that, although they generalize closed 2-forms, they can be pushed-forward. In particular, our 2-form $\Omega$ can be
pushed-forward to give a Dirac structure $L(\theta)$ on the entire space $P\times_{G}\mathfrak{g}^*$. Another feature of Dirac structure is that
they generalize Poisson bivectors; actually, for a general Dirac structure $L\subset TM\oplus T^*M$ on $M$, one can %define the Poisson support  of $L$ (the largest open on which $L$ is Poisson):
talk about the largest open on which $L$ is Poisson (Poisson support of $L$):
\[ \textrm{sup}(L):= \{x\in M: pr_2(L_x)= T_{x}^{*}M\}.\]
Our local model arises from the fact that $S$ is inside the support of $L(\theta)$. Also the independence of $\theta$ fits well in this context:
if $\theta'$ is another connection, then $L(\theta')$ is the gauge transform of $L(\theta)$ with respect to
$d(\widetilde{\theta}-\widetilde{\theta'})$. A simple version of Moser's Lemma can be used to show that $L(\theta)$ and $L(\theta')$ are
isomorphic around $S$.
\end{remark}

%%%%%%%%%%%%%%%%%%%%%%%%%%%%%%%%%%%%%%%%%%%%%%%%%%
%%%%%%%%%%%%%%%%%%%%%%%%%%%%%%%%%%%%%%%%%%%%%%%%%%
%%%%%%%%%%%%%%%%%%%%%%%%%%%%%%%%%%%%%%%%%%%%%%%%%%
%%%%%%%%%%%%%%%%%%%%%%%%%%%%%%%%%%%%%%%%%%%%%%%%%%
%%%%%%%%%%%%%%%%%%%%%%%%%%%%%%%%%%%%%%%%%%%%%%%%%%
%%%%%%%%%%%%%%%%%%%%%%%%%%%%%%%%%%%%%%%%%%%%%%%%%%
\subsection{The Poisson homotopy bundle I: via cotangent paths}
\label{The Poisson homotopy bundle over a symplectic leaf}
%%%%%%%%%%%%%%%%%%%%%%%%%%%%%%%%%%%%%%%%%%%%%%%%%%
%%%%%%%%%%%%%%%%%%%%%%%%%%%%%%%%%%%%%%%%%%%%%%%%%%
%%%%%%%%%%%%%%%%%%%%%%%%%%%%%%%%%%%%%%%%%%%%%%%%%%
%%%%%%%%%%%%%%%%%%%%%%%%%%%%%%%%%%%%%%%%%%%%%%%%%%
%%%%%%%%%%%%%%%%%%%%%%%%%%%%%%%%%%%%%%%%%%%%%%%%%%
%%%%%%%%%%%%%%%%%%%%%%%%%%%%%%%%%%%%%%%%%%%%%%%%%%
For the statement of the main theorem, we still have to discuss the Poisson homotopy bundle over a symplectic leaf.
In this subsection we provide a first description, completely analogous to the construction of the universal cover of a manifold.
It is based on the idea that Poisson geometry is governed by ``contravariant geometry''- for which we use \cite{CrFe2,Fernandes}
as references. We recall here a few basic facts. The overall idea is that, in Poisson geometry, the relevant directions are the
``cotangent ones'', i.e., for a Poisson manifold $(M, \pi)$, one should replace the tangent bundle $TM$ by the cotangent one $T^*M$. The
two are related by
\[ \pi^{\sharp}: T^*M\longrightarrow TM , \ \pi^{\sharp}(\alpha)(\beta)= \pi(\alpha, \beta).\]
Of course, $T^*M$ should be considered together with the structure that allows us to treat it as a ``generalized tangent bundle'', i.e. with its
canonical structure of Lie algebroid: the anchor is $\pi^{\sharp}$, while the Lie bracket given by
\begin{equation}\label{bracket_cotangent}
[\alpha, \beta]_{\pi}= L_{\pi^{\sharp}(\alpha)}(\beta)-L_{\pi^{\sharp}(\beta)}(\alpha)-d\pi(\alpha,\beta),\quad (\forall)\alpha,\beta\in \Gamma(T^*M).
\end{equation}
According to this philosophy,  the analogue of the universal cover of a manifold should use ``cotangent paths'' instead of paths. Recall that a
\textbf{cotangent path} in $(M, \pi)$ is a path $a:[0,1]\rmap T^*M$, above some path $\gamma: [0, 1]\longrightarrow M$, such that
\[ \pi^{\sharp}(a(t))=\frac{d}{dt} \gamma(t) .\]
Similarly, one can talk about cotangent homotopies and one defines \textbf{the Poisson homotopy groupoid} of $(M, \pi)$, denoted $\Sigma(M,
\pi)$ (also called the Weinstein groupoid \cite{CrFe1, CrFe2}), as the space consisting of cotangent homotopy classes of paths:
\[ \Sigma(M,\pi)= \frac{\textrm{cotangent\ paths}}{\textrm{cotangent\ homotopy}}.\]
The source/target maps $s, t: \Sigma(M,\pi)\rmap M$ take a cotangent path into the initial/final point of the base path.

\begin{definition}
The \textbf{Poisson homotopy bundle} of $(M, \pi)$ at $x$ is $P_x:= s^{-1}(x)$ (the set of cotangent homotopy classes of cotangent paths starting at $x$).
\end{definition}

Recall that $\Sigma(M, \pi)$ is a groupoid, where the composition is given by concatenation of cotangent paths (here, to stay within the class
of smooth paths, the concatenation is slightly perturbed using a bump function; however, up to cotangent homotopy, the result does not depend on
the choice of the bump function- again, see \cite{CrFe2} for details). In particular, $G_x:= s^{-1}(x)\cap t^{-1}(x)$ is a group, which we will
call the \textbf{Poisson homotopy} group of $(M, \pi)$ at $x$. Also, the composition defines a free action of $G_x$ on $P_x$, and the quotient
is identified with the symplectic leaf $S_x$ through $x$, via the target map
\[ P_x\longrightarrow S_x,\ \ [a]\mapsto \gamma(1).\]

Regarding the smoothness of $\Sigma(M,\pi)$, one remarks that it is a quotient of the (Banach) manifold of cotangent paths of class $C^1$. We
are interested only in smooth structures which make the corresponding quotient map into a submersion. Of course, there is at most one such
smooth structure on $\Sigma(M, \pi)$; when it exists, one says that \textbf{$\Sigma(M, \pi)$ is smooth}, or that $(M, \pi)$ is
\textbf{integrable}. Note that in this case $\Sigma(M, \pi)$ will be a finite dimensional manifold, but which may fail to be Hausdorff. If also
the Hausdorffness condition is satisfied, we say that $(M, \pi)$ is \textbf{Hausdorff integrable}. Completely analogously, one makes sense of
the smoothness of $P_x$ and of $G_x$. Note however that, whenever smooth, these two will be automatically Hausdorff. Moreover, the smoothness of
$P_x$ is equivalent to that of $G_x$- and this is controlled by the \textbf{monodromy map} at $x$, which is a group homomorphism
\begin{equation}
\label{partial}
\partial: \pi_2(S)\rmap G(\mathfrak{g}_x)
\end{equation}
into the 1-connected Lie group $G(\mathfrak{g}_x)$ integrating $\mathfrak{g}_x$. Intuitively, $\partial$ encodes the variation of symplectic
areas, while in the smooth case, $\partial$ can also be identified with the boundary in the homotopy long exact sequence associated to $P_x\rmap
S_x$. From \cite{CrFe2}, we mention here:

\begin{proposition}\label{smoothness-all-in-one} The Poisson homotopy bundle $P_x$ at $x$ is smooth if and only if the image of $\partial_x$ is a discrete subgroup of $G(\mathfrak{g}_x)$.

In this case $P_x$ is smooth principal $G_x$-bundle over $S_x$, the Lie algebra of $G_x$ is $\mathfrak{g}_x$, $\pi_0(G_x)\cong \pi_1(S)$ and
identity component $G_{x}^{0}$ is isomorphic to $G(\mathfrak{g}_x)/Im(\partial_x)$.
\end{proposition}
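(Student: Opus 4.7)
The plan is to reduce the smoothness of $P_x$ to that of the Poisson homotopy group $G_x$. Since the target map $P_x\rmap S_x$ realizes $S_x$ as the quotient of $P_x$ by the free $G_x$-action by concatenation, once $G_x$ carries a (finite-dimensional, Hausdorff) Lie group structure this quotient map automatically becomes a principal $G_x$-bundle, transporting smoothness from $G_x$ and $S_x$ back to $P_x$. So the task reduces to analyzing $G_x$, and the backbone of the argument is the short exact sequence of groups
\[ 1\rmap G(\mathfrak{g}_x)/\mathrm{Im}(\partial_x)\rmap G_x\rmap \pi_1(S_x,x)\rmap 1.\]

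To construct the right-hand projection, send $[a]\in G_x$ to the homotopy class of its base loop; this is well-defined because cotangent homotopies project to ordinary homotopies, and surjective because the anchor $\pi^{\sharp}$ maps $T^*M|_{S_x}$ onto $TS_x$, so every smooth loop $\gamma$ in $S_x$ admits a cotangent lift. For the kernel, if the base loop $\gamma$ of $a$ is null-homotopic one chooses a disk $D\subset S_x$ filling $\gamma$ together with a cotangent lift of $D$ (again by surjectivity of $\pi^{\sharp}$ onto $TS_x$); this produces a cotangent homotopy from $a$ to a path concentrated over $x$, that is, a path valued in $\ker(\pi^{\sharp}|_{T_x^*M})=\mathfrak{g}_x$, which integrates to an element of the $1$-connected Lie group $G(\mathfrak{g}_x)$. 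Two choices of disk $D$ differ by a sphere in $S_x$, and the resulting indeterminacy in the $G(\mathfrak{g}_x)$-endpoint is by definition the monodromy map $\partial_x$, so the kernel is identified with $G(\mathfrak{g}_x)/\mathrm{Im}(\partial_x)$.

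With the exact sequence established, the rest is bookkeeping. The quotient $G(\mathfrak{g}_x)/\mathrm{Im}(\partial_x)$ carries a Lie group structure with Lie algebra $\mathfrak{g}_x$ precisely when $\mathrm{Im}(\partial_x)$ is a discrete (hence closed, since normal) subgroup of $G(\mathfrak{g}_x)$; in that case it becomes $G_x^0$, the full $G_x$ is Lie with $\pi_0(G_x)\cong \pi_1(S_x)$, and the reduction above transfers smoothness to $P_x$. Conversely, non-discreteness of $\mathrm{Im}(\partial_x)$ would force any candidate smooth structure on $G_x^0$ to pull back to a bad quotient on $G(\mathfrak{g}_x)$, obstructing smoothness. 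The main technical obstacle is the disk-filling step: turning a smooth homotopy of base paths into a genuine cotangent homotopy requires producing a $T^*M$-valued $1$-form on a square satisfying the anchor compatibility in both directions (this is where the cotangent bracket \eqref{bracket_cotangent} is used), and keeping careful track of how changes in the filling affect the resulting vertical endpoint is precisely the monodromy analysis of Crainic--Fernandes.
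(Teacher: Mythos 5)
The paper does not actually prove this proposition: it is quoted verbatim from \cite{CrFe2}, so there is no in-paper argument to compare against. Your sketch reconstructs the standard argument of that reference --- the exact sequence
$\pi_2(S)\stackrel{\partial_x}{\rmap} G(\mathfrak{g}_x)\rmap G_x\rmap \pi_1(S)\rmap 1$,
the identification of the kernel of $G_x\rmap\pi_1(S)$ with $G(\mathfrak{g}_x)/\mathrm{Im}(\partial_x)$, and the observation that this quotient is a Lie group with Lie algebra $\mathfrak{g}_x$ exactly when $\mathrm{Im}(\partial_x)$ is discrete --- and the stated consequences ($\pi_0(G_x)\cong\pi_1(S)$, $G_x^0\cong G(\mathfrak{g}_x)/\mathrm{Im}(\partial_x)$) do follow from that sequence as you say. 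So the architecture is the right one.

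However, the two load-bearing steps are asserted rather than proved. First, the smoothness transfer is stated with the logic inverted: smoothness of $P_x$ means, by the paper's definition, that the quotient map from the Banach manifold of cotangent paths admits a smooth structure making it a submersion. You cannot ``transport smoothness from $G_x$ and $S_x$ back to $P_x$'' by declaring $P_x\rmap S_x$ a principal bundle; one must first produce charts on $P_x$ compatible with the quotient topology (in \cite{CrFe1,CrFe2} this is where discreteness of the monodromy is actually used, via the exponential of the algebroid, to show the foliation on the path space is simple), and only then is the principal bundle structure available. The converse direction is likewise only gestured at. Second, exactness at $G(\mathfrak{g}_x)/\mathrm{Im}(\partial_x)$ requires knowing that the \emph{only} identifications among $\mathfrak{g}_x$-valued cotangent loops produced by cotangent homotopies through all of $M$ (not merely through the fiber over $x$) are those recorded by $\partial_x$; you flag this as ``the monodromy analysis of Crainic--Fernandes,'' but it is the substantive content of the proposition rather than a technicality, so deferring it leaves the proof incomplete as a standalone argument. (A minor point: discreteness of a subgroup of a Hausdorff group already implies closedness, so ``since normal'' is misplaced; what normality/centrality of $\mathrm{Im}(\partial_x)$ is actually needed for is that the quotient be a group, and this holds because the monodromy lands in the center of $G(\mathfrak{g}_x)$.)
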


Coming back to our normal forms:

\begin{definition}\label{first-order-jet-def1} Assuming that $P_x$ is smooth, \textbf{the first order local model} of $(M,\pi)$ around $S= S_x$
is defined as the local model (in the sense of subsection \ref{The local model}) associated to the Poisson homotopy bundle. The Poisson
structure on the local model (well defined up to diffeomorphisms) is denoted $j^{1}_{S}\pi$ and is called \textbf{the first order approximation
of $\pi$ along $S$}.
\end{definition}

The fact that the Poisson homotopy bundle encodes the first jet of $\pi$ along $S$ will be explained in the next subsection; the fact that
$j^{1}_{S}\pi$ deserves the name of first order approximation of $\pi$ along $S$ is explained in section \ref{Poisson structures around a
symplectic leaf: the algebraic framework} (subsection \ref{The dilatation operators and jets along $S$}).

%-
%and this can be made precise when $x$ is a regular point (when $G(\mathfrak{g}_x)= \mathfrak{g}_x$ is the conormal bundle of $S$ at $x$).

%%%%%%%%%%%%%%%%%%%%%%%%%%%%%%%%%%%%%%%%%%%%%%%%%%
%%%%%%%%%%%%%%%%%%%%%%%%%%%%%%%%%%%%%%%%%%%%%%%%%%
%%%%%%%%%%%%%%%%%%%%%%%%%%%%%%%%%%%%%%%%%%%%%%%%%%
%%%%%%%%%%%%%%%%%%%%%%%%%%%%%%%%%%%%%%%%%%%%%%%%%%
%%%%%%%%%%%%%%%%%%%%%%%%%%%%%%%%%%%%%%%%%%%%%%%%%%
%%%%%%%%%%%%%%%%%%%%%%%%%%%%%%%%%%%%%%%%%%%%%%%%%%

\subsection{The Poisson homotopy bundle II: via its Atiyah sequence}
\label{subsection-Atiyah sequences}
%%%%%%%%%%%%%%%%%%%%%%%%%%%%%%%%%%%%%%%%%%%%%%%%%%
%%%%%%%%%%%%%%%%%%%%%%%%%%%%%%%%%%%%%%%%%%%%%%%%%%
%%%%%%%%%%%%%%%%%%%%%%%%%%%%%%%%%%%%%%%%%%%%%%%%%%
%%%%%%%%%%%%%%%%%%%%%%%%%%%%%%%%%%%%%%%%%%%%%%%%%%
%%%%%%%%%%%%%%%%%%%%%%%%%%%%%%%%%%%%%%%%%%%%%%%%%%
%%%%%%%%%%%%%%%%%%%%%%%%%%%%%%%%%%%%%%%%%%%%%%%%%%

In this subsection we present a slightly different point of view on the Poisson homotopy bundle $P_x$. The main remark is that the
$P_x$ is not visible right away as a smooth principal bundle, but through its
infinitesimal data, i.e. an ``abstract Atiyah sequence''.

This point of view has several advantages. For instance, it will allow us to see that, indeed, $P_x$ encodes the first order jet of $\pi$ along
the symplectic leaf $S_x$. It also implies that the local model can be constructed without the smoothness assumption on $P_x$. Also, this
approach to $P_x$ does not really need the use of \cite{CrFe1} (integrability of Lie algebroids); actually, (abstract) Atiyah sequences appeared
outside the theory of Lie algebroids, as the infinitesimal counterparts of principal bundles. However, we will appeal to the language of Lie
algebroids, as it simplifies the discussion. In particular, an \textbf{abstract Atiyah sequence} over a manifold $S$ is simply a transitive Lie
algebroid $A$ over $S$, thought of as the exact sequence of Lie algebroids:
\begin{equation}
\label{abstract-Atiyah}
0\rmap Ker(\rho)\rmap A\stackrel{\rho}{\rmap} TS\rmap 0,
\end{equation}
where $\rho$ is the anchor map of $A$. Any principal $G$-bundle $p: P\longrightarrow S$ gives rise to such a sequence,
known as \textbf{the Atiyah sequence} associated to $P$:
\[ 0\longrightarrow P\times_{G} \mathfrak{g}^*\longrightarrow TP/G\stackrel{(dp)}{\longrightarrow} TS \rmap 0.\]
Here, the Lie algebroid is $A(P):= TP/G$ and the bracket on $\Gamma(A(P))$ comes from the Lie bracket of $G$-invariant vector fields on $P$, via
the identification
\[ \Gamma(A(P))= \mathfrak{X}(P)^G.\]

Given an abstract Atiyah sequence (\ref{abstract-Atiyah}) over $S$, one says that it is \textbf{integrable} if there exists a principal
$G$-bundle $P$ (for some Lie group $G$) such that $A$ is isomorphic to $A(P)$; one also says that $P$ integrates (\ref{abstract-Atiyah}). This
notion was already considered in \cite{Almeida} without any reference to Lie algebroids. However, it is clear that this condition is equivalent
to the integrability of $A$ as a transitive Lie algebroid (see also \cite{Mackenzie, MackenzieLL,CrFe1}). In particular, as for Lie groups, if
(\ref{abstract-Atiyah}) is integrable, then there exists a unique (up to isomorphism) 1-connected principal bundle integrating it.

\begin{remark}
\label{gauge-groupoids} The fact that abstract Atiyah sequences are the infinitesimal counterparts of principal bundles also follows from the
fact that transitive groupoids are essentially the same thing as principal bundles: any principal $G$-bundle $p: P\rmap S$ induces a transitive
Lie groupoid over $S$- the quotient of the pair groupoid of $P$ modulo the diagonal action of $P$ (called the gauge groupoid of $P$);
conversely, any transitive Lie groupoid $\mathcal{G}$ over $S$ arises in this way- just choose $x\in S$ and choose $t: P_x= s^{-1}(x)\rmap S$,
with structural group the isotropy group $G_x= s^{-1}(x)\cap t^{-1}(x)$.
\end{remark}

Back to our Poisson manifold $(M, \pi)$, one has an abstract Atiyah sequence
\begin{equation}
\label{abstract-Atiyah-leaf}
0\rmap \nu_{S}^{*} \longrightarrow T^{*}_{S}M\stackrel{\pi^{\sharp}}{\longrightarrow} TS\rmap 0 .
\end{equation}
above any symplectic leaf $S= S_x$. Of course, $T^{*}_{S}M$ is just the restriction to $S$ of the cotangent Lie algebroid (see the
previous subsection). The description of $P_x$ in terms of paths can also be seen as the general construction of \cite{CrFe1}
applied to this Lie algebroid. We conclude (using the above mentioned references):

\begin{proposition} Given $x\in S$, the Poisson homotopy bundle $P_x$ is smooth if and only if the abstract Atiyah sequence
(\ref{abstract-Atiyah-leaf}) is integrable. Moreover, in this case $P_x$ is the unique integration of (\ref{abstract-Atiyah-leaf}) which is
1-connected.
\end{proposition}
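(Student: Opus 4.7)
The plan is to reduce the statement to the general integrability theorem for Lie algebroids of \cite{CrFe1}, by identifying $P_x$ with the source fiber at $x$ of the Weinstein groupoid of the restricted cotangent Lie algebroid $A := T^*_S M$. The key geometric observation is that any cotangent path $a:[0,1]\to T^*M$ with $a(0)\in T^*_x M$ automatically takes values in $A$, with base path $\gamma$ entirely contained in $S$: along $S$ the anchor $\pi^{\sharp}$ lands in $TS$, so $\dot{\gamma}(t)=\pi^{\sharp}(a(t))$ is tangent to $S$ whenever $\gamma(t)\in S$, and uniqueness for the corresponding ODE keeps $\gamma$ in $S$. The same argument applies to cotangent homotopies, so the quotient $P_x$ computed in $(M,\pi)$ coincides tautologically with the source fiber $s^{-1}(x)$ of the Weinstein groupoid $\Sigma(A)$ built from $A$-paths and $A$-homotopies.

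Next, I would invoke the main result of \cite{CrFe1}: for any Lie algebroid $A$ over $S$, the topological quotient $\Sigma(A)$ admits a smooth structure making the projection from the Banach manifold of $A$-paths a submersion if and only if $A$ is integrable, and in that case $\Sigma(A)$ is (up to isomorphism) the unique source-simply-connected Lie groupoid integrating $A$. For the transitive Lie algebroid $A = T^*_S M$, integrability is equivalent to integrability of the abstract Atiyah sequence (\ref{abstract-Atiyah-leaf}): this is the content of the correspondence between transitive Lie groupoids and principal bundles recalled in Remark \ref{gauge-groupoids}, whereby an abstract Atiyah sequence integrates to some principal $G$-bundle precisely when the ambient transitive Lie algebroid integrates to a transitive Lie groupoid, which is then the gauge groupoid of the bundle. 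This gives the stated equivalence.

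For the ``moreover'' clause, assume the equivalent conditions. By Remark \ref{gauge-groupoids} the transitive Lie groupoid $\Sigma(A)$ is the gauge groupoid of the principal $G_x$-bundle $s^{-1}(x)\to S$ with structural group $G_x = s^{-1}(x)\cap t^{-1}(x)$, i.e.\ of $P_x$; since $\Sigma(A)$ is source-simply-connected, $P_x$ is 1-connected and therefore integrates (\ref{abstract-Atiyah-leaf}). Conversely, any 1-connected principal bundle $P'$ integrating (\ref{abstract-Atiyah-leaf}) has source-simply-connected gauge groupoid (its source fibers are diffeomorphic to $P'$), which by the uniqueness clause in \cite{CrFe1} is isomorphic to $\Sigma(A)$; restricting the isomorphism to source fibers at $x$ yields $P'\cong P_x$ as principal bundles. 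The main obstacle is purely bookkeeping---matching the Poisson-geometric notions (cotangent paths, cotangent homotopies, Poisson homotopy groupoid) with the Lie-algebroid ones ($A$-paths, $A$-homotopies, Weinstein groupoid) and then carrying out the translation via the gauge correspondence---so that no new analytic input beyond \cite{CrFe1} is required.
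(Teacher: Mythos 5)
Your proposal is correct and follows essentially the same route as the paper: the paper derives this proposition precisely by observing that the cotangent-path description of $P_x$ is the general construction of \cite{CrFe1} applied to the transitive Lie algebroid $T^*_SM$, and then invoking the integrability theory of \cite{CrFe1} together with the gauge-groupoid/principal-bundle correspondence of Remark \ref{gauge-groupoids}. You merely spell out the details (base paths of cotangent paths staying in the leaf, source fibers of the gauge groupoid being copies of the bundle) that the paper delegates to the cited references.
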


Next, we show that the abstract Atiyah sequence (\ref{abstract-Atiyah-leaf}) (hence also the Poisson homotopy bundle) encodes the first jet of
$\pi$ along $S$. Consider the differential graded Lie algebra $\mathfrak{X}^{\bullet}(M)$ of multivector fields on $M$, the sub-algebra
$\mathfrak{X}^{\bullet}_{S}(M)$ consisting of multivector fields whose restriction to $S$ belongs to $\mathfrak{X}^{\bullet}(S)$ and the square
$I^2(S)\subset C^{\infty}(M)$ of the ideal $I(S)$ of smooth functions which vanish on $S$. The first order jets along $S$ are controlled by the
map of graded Lie algebras
\[ j_{S}^{1}: \mathfrak{X}^{\bullet}_{S}(M)\longrightarrow \mathfrak{X}^{\bullet}_{S}(M)/I^2(S)\mathfrak{X}^{\bullet}(M).\]
We see that, for the symplectic $(S,\omega_S)$, first jets along $S$ of Poisson structures that have $(S,\omega_S)$ as a symplectic leaf
correspond to elements
\begin{equation*} %\label{tau}
\tau\in \mathfrak{X}^{2}_{S}(M)/I^2(S)\mathfrak{X}^{2}(M)\ \ \ \textrm{satisfying}\ \ [\tau, \tau]= 0
\end{equation*}
and with the property that the restriction map
\begin{equation*}%\label{tau-cond}
r_{S}: \mathfrak{X}^{2}_{S}(M)/I(S)^{2}\mathfrak{X}^{2}(M)\longrightarrow \mathfrak{X}^2(S),\ \ \textrm{sends}\ \tau \ \textrm{to} \ \omega_{S}^{-1}.
\end{equation*}
We denote by $J^{1}_{(S, \omega_s)}\textrm{Poiss}(M)$ the set of such elements $\tau$. It is interesting that any such $\tau$ comes from a
Poisson structure $\pi$ defined on a neighborhood of $S$ in $M$ (this follows from the discussion below). Note that, starting with $(S,
\omega_S)$, one always has a short exact sequence of vector bundles over $S$:
\begin{equation}
\label{pre-Atiyah} 0\rmap \nu_{S}^{*}\rmap T^{*}_{S}M
\stackrel{\rho_{\omega_S}}{\rmap} TS \rmap 0, \ \ \  \textrm{with}\ \ \  \rho_{\omega_S}(\xi)=\omega_S^{-1}(\xi_{|TS}).
\end{equation}

\begin{proposition}\label{1st-jet-Atiyah} Given a submanifold $S$ of $M$ and a symplectic form $\omega_{S}$ on $S$, there is a 1-1 correspondence between elements $\tau\in J^{1}_{(S, \omega_S)}\textrm{Poiss}(M)$
and Lie brackets $[\cdot, \cdot]_{\tau}$ on $T^{*}_{S}M$ making (\ref{pre-Atiyah}) into an abstract Atiyah sequence.
\end{proposition}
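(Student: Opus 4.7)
The plan is to describe explicit maps in both directions and verify they are mutually inverse.

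For the forward direction, I would choose any bivector $\pi$ on a neighborhood of $S$ representing $\tau$. The symplectic leaf condition forces $\pi^{\sharp}|_{S}$ to land in $TS$ and to coincide with $\rho_{\omega_S}$, with kernel $\nu_{S}^{*}$. This is precisely the transversality needed for the inclusion $i: S \hookrightarrow M$ to yield a pullback of the cotangent Lie algebroid: one obtains a transitive Lie algebroid structure on $i^{*}T^{*}M = T^{*}_{S}M$ fitting into the sequence (\ref{pre-Atiyah}). Inspection of formula (\ref{bracket_cotangent}) shows that $[\alpha,\beta]_{\pi}|_{S}$ uses $\pi$ only through its restriction to $S$ and its first normal derivatives (the $d\pi(\alpha,\beta)$ term contributing the latter), so the resulting bracket depends on $\pi$ only through $\tau$; denote it $[\cdot,\cdot]_{\tau}$. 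The Jacobi identity survives the passage to $T^{*}_{S}M$ because the Jacobiator for $[\cdot,\cdot]_{\pi}$ is a contraction of $[\pi,\pi] \in I^{2}(S)\mathfrak{X}^{3}(M)$ and therefore vanishes along $S$.

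For the backward direction, given a bracket $[\cdot,\cdot]$ making (\ref{pre-Atiyah}) into a transitive Lie algebroid, I would build a representative bivector as follows. Choose a tubular neighborhood $\phi: \nu_{S} \hookrightarrow M$ and a splitting $\sigma: TS \rmap T^{*}_{S}M$ of $\rho_{\omega_S}$. These choices decompose the given bracket into three pieces of data: (i) a fiberwise Lie algebra structure on $\nu_{S}^{*}$ (the bundle of isotropy algebras), (ii) a linear connection $\nabla$ on $\nu_{S}^{*}$ obtained by bracketing against $\sigma$, and (iii) a curvature two-form $\Omega \in \Omega^{2}(S,\nu_{S}^{*})$ measuring the defect of $\sigma$ from being a Lie algebra morphism. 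Dually on $\nu_{S}$, piece (i) provides a fiberwise linear Poisson structure (the standard Lie--Poisson structure on the dual of a bundle of Lie algebras); combining this with $\omega_{S}^{-1}$ and with the coupling term prescribed by $\nabla$ and $\Omega$ --- a linearized version of the local model of subsection \ref{The local model} --- one obtains a bivector $\pi$ on $\nu_{S}$. Transport via $\phi$ gives a bivector on a neighborhood of $S$ in $M$, and its class $\tau \in \mathfrak{X}^{2}_{S}(M)/I^{2}(S)\mathfrak{X}^{2}(M)$ is the candidate; by construction $r_{S}(\tau)=\omega_{S}^{-1}$.

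What remains is to verify well-definedness and the inverse property. Different choices of $\phi$ or $\sigma$ alter the representative by a term in $I^{2}(S)\mathfrak{X}^{2}(M)$, so $\tau$ is canonical. The identity $[\tau,\tau]=0$ decomposes, in coordinates adapted to these choices, into three components matching respectively the Jacobi identity for the fiber Lie algebras on $\nu_{S}^{*}$, compatibility of $\nabla$ with the fiber bracket, and a Bianchi-type identity relating $\Omega$, $\nabla$ and the anchor; collectively these are precisely the Jacobi identity for the Lie algebroid on $T^{*}_{S}M$. Reading this computation backwards shows the two maps are mutually inverse. The main obstacle is this last algebraic matching: each individual identity is a short computation, but isolating them cleanly from the Schouten square and recognizing them as the single Jacobi of the abstract Atiyah sequence requires careful bookkeeping of the filtration induced on $\mathfrak{X}^{\bullet}_{S}(M)/I^{2}(S)\mathfrak{X}^{\bullet}(M)$ by the conormal bundle $\nu_{S}^{*}$.
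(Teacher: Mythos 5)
Your proposal is correct and follows essentially the same route as the paper: there too the correspondence is established by decomposing both sides, relative to a tubular neighborhood and a splitting, into the triple consisting of a fiberwise Lie algebra structure on $\nu_{S}^{*}$, a linear connection, and a curvature $2$-form $\sigma\in\Omega^{2}(S,\nu_{S}^{*})$, and by matching the components of $[\tau,\tau]=0$ with the structure equations of a transitive Lie algebroid (this is carried out in Remark \ref{reconciliation}, as a compact form of Vorobjev's coupling construction). The only minor imprecision is that the Jacobiator of $[\cdot,\cdot]_{\pi}$ is a first-order differential expression in $[\pi,\pi]$ rather than a pointwise contraction, but since elements of $I^{2}(S)\mathfrak{X}^{3}(M)$ vanish along $S$ together with their first derivatives, your conclusion stands.
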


\begin{remark}\label{non-smooth-model}
The local model (from our main theorem) can be described without using the smoothness $P_x$. This was explained by Vorobjev (see
\cite{Vorobjev,Vorobjev2} and our section \ref{Poisson structures around a symplectic leaf: the algebraic framework}). Here we indicate a
different approach using Dirac structures. We start with a symplectic manifold $(S, \omega_S)$ and an abstract Atiyah sequence over $S$
\[ 0 \rmap K\rmap A\stackrel{\rho}{\rmap} TS \rmap 0,\]
and we construct a Poisson structure around the zero section of the dual $K^*$ of the vector bundle $K$, generalizing the integrable case (note
that, if $A\cong A(P)$, then $K^*= P\times_{G}\mathfrak{g}^*$). Choose a splitting $\theta: A\rmap K$. As for Lie algebras, one has a
``fiberwise linear Poisson structure'' $\pi_{\textrm{lin}}$ on $A^{*}$. Using $\theta^*: K^*\rmap A^*$, we can pull-back $\pi_{\textrm{lin}}$ to
a Dirac structure $L_\theta$ on $K^*$. Let $\omega\in \Omega^2(K^*)$ be the pull-back of $\omega_S$ along the projection. The gauge transform of
$L_\theta$ of with respect to $\omega$, $L(\theta):= L_\theta^{\omega}$, gives the local model in full generality (here, again, $S\subset
supp(L(\theta)$). Indeed, one checks that, when $A$ comes from a principal bundle $P$, this is precisely the Dirac structure mentioned in Remark
\ref{remark-Dirac}); for full details, see \cite{teza}.
\end{remark}

%%%%%%%%%%%%%%%%%%%%%%%%%%%%%%%%%%%%%%%%%%%%%%%%%%
%%%%%%%%%%%%%%%%%%%%%%%%%%%%%%%%%%%%%%%%%%%%%%%%%%
%%%%%%%%%%%%%%%%%%%%%%%%%%%%%%%%%%%%%%%%%%%%%%%%%%
%%%%%%%%%%%%%%%%%%%%%%%%%%%%%%%%%%%%%%%%%%%%%%%%%%
%%%%%%%%%%%%%%%%%%%%%%%%%%%%%%%%%%%%%%%%%%%%%%%%%%
%%%%%%%%%%%%%%%%%%%%%%%%%%%%%%%%%%%%%%%%%%%%%%%%%%

\subsection{More on integrability}
\label{The Poisson homotopy bundle III: via  symplectic groupoids}
%%%%%%%%%%%%%%%%%%%%%%%%%%%%%%%%%%%%%%%%%%%%%%%%%%
%%%%%%%%%%%%%%%%%%%%%%%%%%%%%%%%%%%%%%%%%%%%%%%%%%
%%%%%%%%%%%%%%%%%%%%%%%%%%%%%%%%%%%%%%%%%%%%%%%%%%
%%%%%%%%%%%%%%%%%%%%%%%%%%%%%%%%%%%%%%%%%%%%%%%%%%
%%%%%%%%%%%%%%%%%%%%%%%%%%%%%%%%%%%%%%%%%%%%%%%%%%
%%%%%%%%%%%%%%%%%%%%%%%%%%%%%%%%%%%%%%%%%%%%%%%%%%
There are a few more aspects of $\Sigma(M, \pi)$ that deserve to be recalled here. We use as references
\cite{CrFe1, CaFe, CrFe2, MackXu, MW}.

First of all, while $\Sigma(M, \pi)$ uses the cotangent Lie algebroid $T^*M$ of the Poisson manifold, a similar construction applies to any
Lie algebroid $A$. The outcome is a topological groupoid $\mathcal{G}(A)$ whose smoothness if equivalent to the
integrability of $A$. For instance, when $A= \mathfrak{g}$ is a Lie algebra,
$\mathcal{G}(\mathfrak{g})$ is the unique 1-connected Lie group with Lie algebra $\mathfrak{g}$. When $A= TM$, then $\mathcal{G}(TM)$ is the usual
homotopy groupoid of $M$. Implicit in the previous discussion above is the fact that, for a symplectic leaf $S$ of a Poisson manifold $(M, \pi)$,
$\mathcal{G}(T^*M|_{S})= \mathcal{G}(T^*M)|_{S}$ is encoded by the Poisson homotopy bundle (see Remark \ref{gauge-groupoids}).

The second point is that, while $\mathcal{G}(A)$ makes sense for any Lie algebroid $A$, in the Poisson case, $\mathcal{G}(T^*M)= \Sigma(M, \pi)$
is a symplectic groupoid, i.e, comes endowed with a symplectic structure $\omega$ which is compatible with the groupoid composition (i.e. is
multiplicative) .  There are two uniqueness phenomena here:
\begin{itemize}
\item $\omega$ is the unique multiplicative form for which $s: \Sigma\rmap M$ is a Poisson map.
\item For a general symplectic groupoid $(\Sigma, \omega)$ over a manifold $M$, there is a unique Poisson structure $\pi$ on $M$ such that $s: \Sigma\rmap M$ is a Poisson map.
\end{itemize}
Combining with Lie II for Lie algebroids we deduce that, for a Poisson manifold $(M, \pi)$, if $(\Sigma, \omega)$ is a symplectic Lie groupoid
with 1-connected $s$-fibers, and if $s: (\Sigma, \omega)\rmap (M,\pi)$ is a Poisson map, then $\Sigma$ is isomorphic to $\Sigma(M,\pi)$. For us,
this gives a way of computing Poisson homotopy bundles more directly.

\begin{example} \label{P-bdle-ex1}\rm
Consider the linear Poisson structure $(\mathfrak{g}^*,\pi_{\mathrm{lin}})$ on the dual of a Lie algebra $\mathfrak{g}$. Let $G=
G(\mathfrak{g})$. We have that $T^*G$, endowed with the canonical symplectic structure, is the $s$-fiber 1-connected symplectic groupoid
integrating $\pi_{\mathrm{lin}}$. Using the identifying $T^*G= G\times \mathfrak{g}^*$, given by left translations, the groupoid structure is
that of the action groupoid of $G$ on $\mathfrak{g}^*$. It follows that, for any $\xi\in \mathfrak{g}^*$, the symplectic leaf through $\xi$ is
the coadjoint orbit $\mathcal{O}_{\xi}$ and the associated Poisson homotopy bundle is precisely the $G_{\xi}$-bundle $G\rmap \mathcal{O}_{\xi}$.
\end{example}

%%%%%%%%%%%%%%%%%%%%%%%%%%%%%%%%%%%%%%%%%%%%%%%%%%
%%%%%%%%%%%%%%%%%%%%%%%%%%%%%%%%%%%%%%%%%%%%%%%%%%
%%%%%%%%%%%%%%%%%%%%%%%%%%%%%%%%%%%%%%%%%%%%%%%%%%
%%%%%%%%%%%%%%%%%%%%%%%%%%%%%%%%%%%%%%%%%%%%%%%%%%
%%%%%%%%%%%%%%%%%%%%%%%%%%%%%%%%%%%%%%%%%%%%%%%%%%
%%%%%%%%%%%%%%%%%%%%%%%%%%%%%%%%%%%%%%%%%%%%%%%%%%
\section{The main theorem again: reformulations and some examples}
\label{The main theorem again: reformulations and some examples}
%%%%%%%%%%%%%%%%%%%%%%%%%%%%%%%%%%%%%%%%%%%%%%%%%%
%%%%%%%%%%%%%%%%%%%%%%%%%%%%%%%%%%%%%%%%%%%%%%%%%%
%%%%%%%%%%%%%%%%%%%%%%%%%%%%%%%%%%%%%%%%%%%%%%%%%%
%%%%%%%%%%%%%%%%%%%%%%%%%%%%%%%%%%%%%%%%%%%%%%%%%%
%%%%%%%%%%%%%%%%%%%%%%%%%%%%%%%%%%%%%%%%%%%%%%%%%%
%%%%%%%%%%%%%%%%%%%%%%%%%%%%%%%%%%%%%%%%%%%%%%%%%%
In this section we give a complete statement the main theorem, two equivalent formulations and several examples.

As a summary of the previous section: given $(M, \pi)$ and the symplectic leaf $S$ through $x$, the first order jet of $\pi$ along $S$ is
encoded in the Poisson homotopy bundle $P_x\rmap S$; out of it we produced  the local model $P_x\times_{G_x}\mathfrak{g}_{x}^{*}$ which, around
$S$, is a Poisson manifold admitting $S$ as a symplectic leaf.

\begin{mmtheorem}[complete version] \label{Main-Theorem-2p}
Let $(M, \pi)$ be a Poisson manifold, $S$ a compact symplectic leaf and $x\in S$. If $P_x$, the Poisson homotopy bundle at $x$, is smooth,
compact, with
\begin{equation}\label{subtle-cond}
H^2(P_x; \mathbb{R})= 0,
\end{equation}
then there exists a Poisson diffeomorphism between an open neighborhood of $S$ in $M$ and a Poisson neighborhood of $S$ in the local model
$P_x\times_{G_x}\mathfrak{g}_{x}^{*}$ associated to $P_x$, which is the identity on $S$.
\end{mmtheorem}
\begin{remark}
The open neighborhood of $S$ in $M$ can be chosen to be saturated. Indeed, by the comments following Definition
\ref{definition_Poisson_neighborhood}, the open in the local model can be chosen of the type $P_x\times_{G_x}V$, and this is a union of compact
symplectic leaves.
\end{remark}
Comparing with the classical results from foliation theory and group actions, the surprising condition is (\ref{subtle-cond}). As we shall soon
see, this condition is indeed necessary and it is related to integrability. However, as the next proposition shows, this condition is
not needed in the Hausdorff integrable case. 

\begin{proposition}\label{main-cor}
In the main theorem, if $S$ admits a neighborhood which is Hausdorff integrable, then the assumption (\ref{subtle-cond}) can be dropped.
\end{proposition}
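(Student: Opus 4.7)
\textbf{Proof plan for Proposition \ref{main-cor}.} The plan is to revisit the three-step strategy of the main theorem---(i) a Moser isotopy reducing the normal form to a cohomological equation in $H^2_{\pi_t}(U)$, (ii) a Van~Est-plus-averaging step reducing that cohomology problem to integrability of $(M,\pi)$, (iii) building the required symplectic realization on the Banach manifold of cotangent paths---and identify exactly where the hypothesis $H^2(P_x;\mathbb{R})=0$ is invoked. Under the Hausdorff integrability assumption of Proposition \ref{main-cor} the symplectic groupoid $(\Sigma,\omega_\Sigma)$ is given directly, so step (iii) is unnecessary. Moreover, smoothness and compactness of $P_x$, combined with compactness of $S$, force the $s$-fibers of $\Sigma$ over leaves near $S$ to remain compact (they are perturbations of $P_x$) and the isotropies to be compact; hence $\Sigma|_U$ is a \emph{proper} symplectic groupoid over a sufficiently small saturated neighborhood $U$ of $S$. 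This is the key gain, since properness supplies a Haar system and the associated averaging operator.

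Next I would carry out the Moser interpolation: choose a smooth path of Poisson structures $\pi_t$ on $U$ joining the first order model $j^1_S\pi$ to $\pi$, all sharing the same first jet along $S$. The Moser equation $\mathcal{L}_{X_t}\pi_t=-\dot\pi_t$ is solvable once one produces a time-dependent $1$-form $\alpha_t$ with $d_{\pi_t}\alpha_t=-\dot\pi_t$ in the Lichnerowicz complex; the vector field $X_t=\pi_t^\sharp(\alpha_t)$ then generates an isotopy whose time-$1$ flow yields a Poisson diffeomorphism between a neighborhood of $S$ in $(M,\pi)$ and a Poisson neighborhood of $S$ in the local model, fixing $S$ pointwise.

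The main obstacle, and the only genuine issue, is producing $\alpha_t$ using integrability alone, i.e.\ showing that the specific class $[\dot\pi_t]\in H^2_{\pi_t}(U)$ vanishes without invoking $H^2(P_x;\mathbb{R})=0$. The idea is that this class has very special shape: because $\pi_t$ and $j^1_S\pi$ agree to first order on $S$, the $2$-cocycle $\dot\pi_t$ vanishes to first order along $S$ and lifts via the Van~Est map to a multiplicative $2$-cocycle on $\Sigma|_U$ which vanishes along the identity bisection. Averaging this lift against the Haar system of the proper groupoid $\Sigma|_U$ produces a multiplicative primitive, which descends (again via Van~Est) to the required $\alpha_t$; crucially, no hypothesis on $H^2(P_x;\mathbb{R})$ is needed because the averaging argument only kills cocycles of this restricted form rather than the whole second cohomology. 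This is the ``careful analysis'' alluded to at the end of Subsection \ref{Step 2.1: Reduction to integrability}, and it is what makes Proposition \ref{main-cor} cleaner than the main theorem.
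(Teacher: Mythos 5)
Your overall architecture agrees with the paper's: by Corollary \ref{remark-corollary2}, only one class needs to vanish (the linearization class $l_{\pi,S}=[\pi]-[\pi|_S]$, equivalently $[\dot{\gamma}_1]$), and Hausdorff integrability supplies a proper symplectic groupoid $\Sigma(V)$ over a small connected saturated $V$, so that properness (vanishing of differentiable cohomology) plus the Van Est map should kill this class. Up to that point you are on track, and you correctly locate where $H^2(P_x;\mathbb{R})=0$ enters in the general argument.

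The gap is in the one step that carries all the weight: why is this particular class in the \emph{image} of the Van Est map in degree $2$? In the main theorem this is exactly what $H^2(P_x;\mathbb{R})=0$ buys (cohomological $2$-connectedness of the $s$-fibers makes Van Est surjective in degree $2$), so Proposition \ref{main-cor} must circumvent it for $l_{\pi,S}$ specifically. Your justification --- that $\dot{\pi}_t$ vanishes to first order along $S$, hence lifts to a multiplicative cocycle vanishing along the identity bisection --- does not suffice. By Corollary 2 of \cite{Cra}, membership in the image of Van Est is detected by the periods of the associated right-invariant foliated $2$-form over $2$-spheres in \emph{all} $s$-fibers of $\Sigma(V)$, not just those over $S$. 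Vanishing of the cocycle along $S$ controls only the fibers over $S$; over a nearby point $y\notin S$ the $s$-fiber is still a copy of $P_x$, may have nontrivial $H^2$, and the foliated class on it can vary nontrivially in the transverse directions --- this is precisely the Duistermaat--Heckman phenomenon and is what makes Example \ref{main-example} fail. The paper closes this gap with two ingredients absent from your plan: (a) the identification (via \cite{WeinXu} and an explicit computation) of $l_{\pi,S}$ with the form $\omega=\Omega-t^*(p^*\omega_S)$ on $\Sigma(V)$, which is \emph{globally closed} on $\Sigma(V)$, not merely leafwise closed; and (b) a homotopy, through all of the connected $\Sigma(V)$ rather than within a single $s$-fiber, carrying an arbitrary $2$-sphere in an $s$-fiber to one lying in an $s$-fiber over $S$, where $\omega$ vanishes identically; Stokes for the globally closed $\omega$ then kills all periods. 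Without the global closedness of this particular representative the homotopy cannot leave an $s$-fiber and the period argument collapses, so your ``restricted form of the cocycle'' claim needs exactly these two steps to become a proof.
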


Note also that, in contrast with the proposition, if $M$ is compact, then the 
conditions of our main theorem cannot hold {\it at all} points $x\in M$ (since it would follow that $\Sigma(M, \pi)$ is compact
and its symplectic form is exact- see \cite{CrFe0}).

%Note the conceptual difference between the proposition and the main theorem: in the theorem, the assumptions depend only on the first jet
%$j^{1}_{S}\pi$. As a consequence, the proof of the proposition will be considerably easier. The proposition can also be derived from Zung's
%results on linearization of proper groupoids \cite{Zung}.
Next, since the conditions of the theorem may be difficult to check in explicit examples, we reformulate them in terms of the Poisson
homotopy group $G_x$ at $x$.

\begin{proposition}\label{Prop_restatement_1} The conditions of the main theorem are equivalent to:
\begin{enumerate}
\item[(1)] The leaf $S$ is compact.
\item[(2)] The Poisson homotopy group $G_x$ is smooth and compact.
\item[(3)] The dimension of the center of $G_x$ equals to the rank of $\pi_2(S, x)$.
\end{enumerate}
\end{proposition}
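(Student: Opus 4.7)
The plan is to translate each of the three hypotheses of the Main Theorem (smoothness of $P_x$; compactness of $P_x$; vanishing of $H^2(P_x;\mathbb{R})$) into the three conditions of the proposition, using the description of $P_x$ as a principal $G_x$-bundle over $S$ together with a routine homotopical computation.

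First, I would invoke Proposition~\ref{smoothness-all-in-one}: the smoothness of $P_x$ is equivalent to the smoothness of $G_x$, and when smooth, $P_x\rmap S$ is a principal $G_x$-bundle. Since the fibre is $G_x$ and the base is $S$, the total space $P_x$ is compact if and only if both $G_x$ and $S$ are compact. This matches the first two hypotheses of the Main Theorem with (1) and (2).

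Granting (1) and (2), I would analyse the cohomological condition as follows. By its construction via cotangent homotopies, $P_x$ is $1$-connected, and by assumption compact. Hurewicz and universal coefficients then give
\[
H^2(P_x;\mathbb{R})\ \cong\ \mathrm{Hom}\bigl(\pi_2(P_x),\,\mathbb{R}\bigr),
\]
so $H^2(P_x;\mathbb{R})=0$ is equivalent to $\mathrm{rank}\,\pi_2(P_x)=0$. I then plug this into the long exact homotopy sequence of the fibration $G_x\rmap P_x\rmap S$. Using $\pi_2(G_x)=0$ (a general fact for Lie groups) and $\pi_1(P_x)=0$, it collapses to the short exact sequence
\[
0\rmap \pi_2(P_x)\rmap \pi_2(S)\rmap \pi_1(G_x)\rmap 0.
\]
Tensoring with $\mathbb{R}$ and taking dimensions yields
\[
\mathrm{rank}\,\pi_2(S)\ =\ \mathrm{rank}\,\pi_2(P_x)\ +\ \mathrm{rank}\,\pi_1(G_x),
\]
so the vanishing of $H^2(P_x;\mathbb{R})$ amounts to the equality $\mathrm{rank}\,\pi_1(G_x)=\mathrm{rank}\,\pi_2(S)$.

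To finish, I would identify $\mathrm{rank}\,\pi_1(G_x)$ with $\dim Z(G_x)$. Since $G_x$ is compact, $\pi_1(G_x)=\pi_1(G_x^0)$; and for a compact connected Lie group $H$, writing its universal cover as the product of a torus (of dimension $\dim\mathfrak{z}(\mathfrak{h})$) with a compact semisimple simply connected group, one obtains $\mathrm{rank}\,\pi_1(H)=\dim Z(H)=\dim\mathfrak{z}(\mathfrak{h})$. Applied to $H=G_x^0$, this gives $\mathrm{rank}\,\pi_1(G_x)=\dim\mathfrak{z}(\mathfrak{g}_x)=\dim Z(G_x)$, which is condition (3). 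The only subtle point I expect is this last identification, where one has to keep track of the fact that the residual $\pi_0(G_x)=\pi_1(S)$ plays no role in the rational rank; everything else is bookkeeping via Proposition~\ref{smoothness-all-in-one} and the standard homotopy long exact sequence.
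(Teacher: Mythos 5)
Your argument is correct and is essentially the paper's: the short exact sequence $0\to\pi_2(P_x)\to\pi_2(S)\to\pi_1(G_x)\to 0$ that you extract from the homotopy long exact sequence of $G_x\to P_x\to S$ is exactly the sequence the paper builds from the monodromy map $\partial_x$ (whose kernel is $\pi_2(P_x)$ and whose image is $\pi_1(G_x^0)$), and in both cases the compactness of $G_x$ is what forces the rank of the right-hand term to equal $\dim Z(\mathfrak{g}_x)$. One small slip worth fixing: the universal cover of a compact connected Lie group is $\mathbb{R}^k\times K$ with $K$ compact semisimple and $1$-connected, not a torus times $K$ (a torus is not simply connected); this does not affect your conclusion $\operatorname{rank}\pi_1(G_x^0)=\dim Z(\mathfrak{g}_x)$, which holds because compactness of $G_x^0$ forces $\pi_1(G_x^0)$ to project onto a full lattice in the $\mathbb{R}^k$ factor.
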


\begin{proof} We already know that the smoothness of $P_x$ is equivalent to that of $G_x$ while, under this
smoothness condition, compactness of $P_x$ is clearly equivalent to that of $S$ and $G_x$. Hence, assuming (1) and (2), we still have to
show that (3) is equivalent to $H^2(P_x)=0$. Since $G_x$ is compact,
$\mathfrak{g}_x$ is a product of a semi-simple Lie algebra $\mathfrak{H}$ of compact type with its center $\zeta$.
Therefore $G(\mathfrak{g}_x)= H\times \zeta$, with $H$ compact 1-connected, and $\partial_x$ takes values in $Z\times \zeta$, where $Z= Z(H)$ is
a finite group. Since $\pi_2(P_x)$ can be identified with $\mathrm{Ker}(\partial_x)$ we have an exact sequence
\[ 0\rmap \pi_2(P_x)\otimes_{\mathbb{Z}}\mathbb{R}\rmap \pi_2(S)\otimes_{\mathbb{Z}}\mathbb{R}\stackrel{\partial_{\mathbb{R}}}{\rmap} \zeta \]
where, since $P_x$ is 1-connected, the first term is canonically isomorphic to $H_2(P_x; \mathbb{R})$. Finally, since the connected component of
the identity in $G_x$ is $(H\times \zeta)/Im(\partial_x)$, its compactness implies that $\partial_{\mathbb{R}}$ is surjective. Hence a short
exact sequence
\[ 0\rmap H_2(P_x)\rmap \pi_2(S)\otimes_{\mathbb{Z}}\mathbb{R}\stackrel{\partial_{\mathbb{R}}}{\rmap} \zeta \rmap 0.\]
Therefore, condition (3) is equivalent to the vanishing of $H_2(P_x)$.
\end{proof}

Next, using the monodromy group, one can also get rid of the $G_x$. Recall \cite{CrFe2, CrFe1} that the monodromy group of $(M, \pi)$ at $x$,
denoted by $\mathcal{N}_x$, is the image of $\partial_x$ (see (\ref{partial})) intersected with the connected component of the center of
$G(\mathfrak{g}_x)$. Using the exponential it can be viewed as a subgroup of the center of $\mathfrak{g}_x$
\[ \mathcal{N}_x\subset Z(\mathfrak{g}_x).\]

\begin{proposition}\label{Prop_restatement_2} The conditions of the main theorem are equivalent to:
\begin{enumerate}
\item The leaf $S$ is compact with finite fundamental group.
\item The isotropy Lie algebra $\mathfrak{g}_x$ is of compact type.
\item $\mathcal{N}_x$ is a lattice in $Z(\mathfrak{g}_x)$.
\item The dimension of $Z(\mathfrak{g}_x)$ equals the rank of $\pi_2(S,x)$.
\end{enumerate}
\end{proposition}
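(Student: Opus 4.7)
The plan is to derive Proposition \ref{Prop_restatement_2} directly from Proposition \ref{Prop_restatement_1} by translating each condition through the structural data already exposed in the proof of the latter. I would start by setting up the same decomposition: assuming $\mathfrak{g}_x$ is of compact type, write $\mathfrak{g}_x=\mathfrak{H}\oplus\zeta$ with $\mathfrak{H}$ semi-simple and $\zeta=Z(\mathfrak{g}_x)$, so that $G(\mathfrak{g}_x)=H\times\zeta$ with $H$ compact 1-connected and $Z(H)$ finite. Then $\mathrm{Im}(\partial_x)\subseteq Z(H)\times\zeta$, and by definition of the monodromy group $\mathcal{N}_x=\mathrm{Im}(\partial_x)\cap\zeta$, which realizes $\mathrm{Im}(\partial_x)$ as a finite extension of $\mathcal{N}_x$ with finite quotient inside $Z(H)$.

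The heart of the argument is reinterpreting ``$G_x$ smooth and compact'' in terms of $\mathcal{N}_x$. For smoothness, $G_x$ is smooth iff $\mathrm{Im}(\partial_x)$ is discrete in $G(\mathfrak{g}_x)$; since $\mathrm{Im}(\partial_x)$ is a finite union of cosets of $\mathcal{N}_x$, this is equivalent to $\mathcal{N}_x$ being discrete in $\zeta$. For compactness of the identity component $G_x^0=(H\times\zeta)/\mathrm{Im}(\partial_x)$, projecting onto the $\zeta$-factor gives a surjection onto $\zeta/\mathrm{pr}_2(\mathrm{Im}(\partial_x))$ with compact fibers (quotients of $H$), and $\mathrm{pr}_2(\mathrm{Im}(\partial_x))$ is a finite-index extension of $\mathcal{N}_x$ in $\zeta$; consequently, $G_x^0$ is compact iff $\zeta/\mathcal{N}_x$ is compact. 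Combining these two statements, the smoothness of $G_x$ together with the compactness of $G_x^0$ is equivalent to $\mathcal{N}_x$ being a lattice in $Z(\mathfrak{g}_x)$, which is condition (3).

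Everything else is then bookkeeping. Condition (2), that $\mathfrak{g}_x$ is of compact type, is exactly the consequence of $G_x$ being smooth and compact used at the start of the proof of Proposition \ref{Prop_restatement_1}, and conversely it is what allows the decomposition above. Condition (1) follows from the identification $\pi_0(G_x)\cong\pi_1(S)$: given $G_x^0$ compact, upgrading to $G_x$ compact is equivalent to finiteness of $\pi_1(S)$, while compactness of $S$ is common to both propositions. Condition (4) matches condition (3) of Proposition \ref{Prop_restatement_1} via $\dim Z(G_x)=\dim\zeta=\dim Z(\mathfrak{g}_x)$, which is the dimension actually appearing in the short exact sequence computation used in the proof of Proposition \ref{Prop_restatement_1}.

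The only genuine obstacle is to be careful with the finite factor $Z(H)$ when relating $\mathcal{N}_x$ and $\mathrm{Im}(\partial_x)$: neither discreteness nor cocompactness transfers automatically across an arbitrary extension, but here the quotient is finite, so the ``finite union of cosets'' observation both times reduces everything to properties of $\mathcal{N}_x$ inside the vector space $\zeta$. Once that is handled, the four conditions of Proposition \ref{Prop_restatement_2} line up one-to-one with the three conditions of Proposition \ref{Prop_restatement_1}, and the equivalence is established.
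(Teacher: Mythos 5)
Your argument is correct and follows essentially the same route as the paper: decompose $G(\mathfrak{g}_x)=H\times\zeta$ with $Z(H)$ finite, use the finite-index containment $\mathcal{N}_x\subset\widetilde{\mathcal{N}}_x=\mathrm{Im}(\partial_x)$ to transfer discreteness and cocompactness between the two, identify $\pi_0(G_x)\cong\pi_1(S)$ for condition (1), and match condition (4) with condition (3) of Proposition \ref{Prop_restatement_1}. The only cosmetic difference is that the paper quotes \cite{CrFe2} for the equivalence of discreteness of $\mathcal{N}_x$ and of $\widetilde{\mathcal{N}}_x$ and handles the cocompactness step via an injection and a surjection of quotients, whereas you rederive both from the ``finite union of cosets'' observation --- which is legitimate here since the compact-type hypothesis is available in both directions of the equivalence.
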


\begin{proof} Let $\zeta= Z(\mathfrak{g}_x)$ and
denote by $\widetilde{\mathcal{N}}_x$ the image of $\partial_x$. The discreteness of $\mathcal{N}_x$ is equivalent to that of
$\widetilde{\mathcal{N}}_x$ \cite{CrFe2}, hence to the smoothness of $G_x$. The compactness of $G_x$ is equivalent to the following two
conditions: $\pi_0(G_x)$ is finite and the connected component of the identity $G_{x}^{\circ}$ is compact. From Proposition
\ref{smoothness-all-in-one}, the first condition is equivalent to $\pi_1(S)$-finite, while the second one to $(Z\times
\zeta)/\widetilde{\mathcal{N}}_x$ being compact. The last condition is equivalent to $\zeta/\mathcal{N}_x$ being compact (hence to
$\mathcal{N}_x$ being a lattice in $\zeta$). For this, note that $\zeta/\mathcal{N}_x$ injects naturally into $(Z\times
\zeta)/\widetilde{\mathcal{N}}_x$ and there is a surjection of $Z\times \zeta/\mathcal{N}_x$ onto $(Z\times \zeta)/\widetilde{\mathcal{N}}_x$.
Condition (4) is equivalent to (3) from Proposition \ref{Prop_restatement_1}.
\end{proof}

\begin{example}\label{main-example}\rm \ We now give an example in which all conditions of the theorem are satisfied, except for the vanishing of
$H^2(P_x)$, and in which the conclusion of the theorem fails. Consider the $2$-sphere $S^2$, with coordinates denoted $(u, v, w)$, endowed with
the Poisson structure $\pi_{S^2}$ which is the inverse of the area form
\[ \omega_{S^2}=  (udv\wedge dw+ vdw\wedge du+ wdu\wedge dv) .\]
Consider also the linear Poisson structure on $so(3)^*\cong \mathbb{R}^3=\{(x,y,z)\}$:
\[ \pi_{\textrm{lin}}=x\frac{\partial}{\partial y}\wedge\frac{\partial}{\partial z}+y\frac{\partial}{\partial z}\wedge\frac{\partial}{\partial x}+z\frac{\partial}{\partial x}\wedge\frac{\partial}{\partial y}.\]
Its symplectic leaves are the spheres of radius $r>0$, $S^{2}_{r}$, with the symplectic form
\[ \omega_r= \frac{1}{r^2} (x dy\wedge dz+ y dz\wedge dx+ z dx\wedge dy),\]
and the origin. Finally, let $(M, \pi_0)$ be the product of these two Poisson manifolds
\[ M= S^2\times \mathbb{R}^3,  \ \pi_0= \pi_{S^2}+ \pi_{\textrm{lin}}.\]
The symplectic leaves are: $(S,\omega_S):=(S^{2}\times \{0\}, \omega_{S^2})$ and, for $r>0$, $(S^{2}\times S^{2}_{r},
\omega_{S^2}+\omega_r)$. The abstract Atiyah sequence above $S$ is the product of the Lie algebroids $TS$ and $so(3)$. Hence, for
$x\in S$, $G_x$ equals to $G(so(3))= \textrm{Spin}(3)$ and the Poisson homotopy bundle is $P_x= S^2\times
\textrm{Spin}(3)\cong S^2\times S^3$. Using the trivial connection on $P_x$, one finds that $(M, \pi_0)$ coincides with the
resulting local model. Note that all the conditions of the theorem are satisfied, except for the vanishing of $H^2(P_x)$.

Let us now modify $\pi_0$ without modifying $j^{1}_{S}\pi_0$; we consider
\[ \pi= (1+ r^2)\pi_{S^2}+ \pi_{\textrm{lin}},\]
Note that $\pi$ has the same leaves as $\pi_0$, but with different symplectic forms:
\[(S,\omega_S), \ \ \  (S^2\times S^{2}_{r},\frac{1}{1+ r^2} \omega_{S^2}+ \omega_r), \ r>0.\]
%Note that our construction can be performed in greater generality: we start with a symplectic manifold $S$, take the product with the dual
%$\mathfrak{g}^*$ of a Lie algebra and multiply the first Poisson structure by a Casimir function on $\mathfrak{g}^*$. This class of Poisson
%structures (and their associated local models) are discussed in \cite{WadeLent}.
We claim that $\pi$ is not Poisson diffeomorphic, around $S$, to $\pi_0$. Assume it is. Then, for any $r$ small enough, we find $r'$ and a
symplectomorphism
\[ \phi: (S^2\times S^{2}_{r}, \frac{1}{1+ r^2} \omega_{S^2}+ \omega_r)\rmap (S^2\times S^{2}_{r'}, \omega_{S^2}+ \omega_{r'}).\]
Comparing the symplectic volumes, we find $r'=r/(1+r^2)$.
%\[ r'= \frac{r}{1+ r^2}.\]
On the other hand, $\phi$ sends the first generator $\gamma_1$ of $\pi_2(S^2\times S^{2}_{r})$ into a combination $m\gamma_1+ n\gamma_2$ with
$m$ and $n$ integers. Computing the symplectic areas of these elements, we obtain:
\[ \int_{\gamma_1} (\frac{1}{1+r^2} \omega_{S^2}+ \omega_r)= \int_{m\gamma_1+ n\gamma_2} (\omega_{S^2}+ \omega_{r'}),\]
%% \[ \frac{1}{1+ r^2} \cdot (4\pi)= m\cdot (4\pi)+ n\cdot (4r'\pi),\]
%\[ \frac{1}{1+r^2}= m+ nr'= m+ \frac{nr}{1+r^2}.\]
thus $1/(1+r^2)= m+ nr'= m+ nr /(1+r^2)$. This cannot be satisfied for all $r$ (even small enough), because it forces $r$ to be an algebraic
number.

Also, the monodromy group of the leaf $S^2\times S^2_{r}$ is the subgroup of $\mathbb{R}$ generated by $4\pi$ and $4\pi(-2r)/(1+r^2)^2$; therefore, if it is discrete, then again $r$ is algebraic. This shows that $\pi$ is not integrable on any open neighborhood of $S$.
\end{example}

\begin{example}[the regular case]\label{second-main-example}\rm
Assume that $(M, \pi)$ is regular and fix a symplectic leaf $S$. Then the resulting normal form is a refinement of local Reeb
stability: the model for the underlying foliation is the same, while our theorem also specifies the leafwise symplectic forms.
To see this, let
\[ \Gamma= \pi_1(S, x),\quad \nu_x= T_xM/T_xS \]
In this case $\mathfrak{g}_x= \nu_{x}^{*}$ is abelian
and Proposition \ref{smoothness-all-in-one} gives a short exact sequence
\[ G_{x}^{\circ}\rmap G_x\rmap \Gamma .\]
Hence $G_{x}^{\circ}$ is abelian, the Poisson homotopy bundle is a principal $G_{x}^{\circ}$-bundle over the universal cover  $\widetilde{S}$;
in conclusion, as a foliated manifold, the local model is
\[ P_x\times_{G_x} \nu_x\cong \widetilde{S}\times_{\Gamma} \nu_x.\]
The Poisson structure comes from a family of symplectic forms $\omega_{t}$ on $\widetilde{S}$ of the form:
%\[ \omega_{t}= p^*(\omega_S)+ t_1\omega_1\otimes e_1+ \ldots +t_q\omega_q\otimes e_q,\ \ (\{e_1, \ldots, e_{q}\}-\textrm{basis\ of}\ \nu_x),\]
\[ \omega_{t}= p^*(\omega_S)+ t_1\omega_1+ \ldots +t_q\omega_q,\ \ t=(t_1,\ldots, t_q)\in \nu_x,\]
where we have chosen a basis of $\nu_x$, $p:\widetilde{S}\rmap S$ is the projection and $\omega_i\in \Omega^2(\widetilde{S})$ are
representatives of the components of the monodromy map $\partial: \pi_2(S)\rmap \nu_{x}^{*}$, i.e.\
\[ \partial(\sigma)=(\int_{\sigma}\omega_1,\ldots,\int_{\sigma}\omega_q),\ [\sigma]\in \pi_2(S,x).\]
Note that this condition determines uniquely $[\omega_i]\in H^2(\widetilde{S})$.
%Since $\widetilde{S}$ is simply connected and $\pi_2(S)\cong \pi_2(\widetilde{S})$, this condition determines uniquely the cohomology class of
%each $\omega_i$, and different choices produce local models that are Poisson diffeomorphic in a neighborhood of our leaf. Moreover, we also have
Since the rank of $\pi_2(S)$ equals the $b_2(\widetilde{S})$, by Proposition \ref{Prop_restatement_2}, the assumptions of
the theorem become: $\widetilde{S}$ is compact (so local Reeb stability applies) and $[\omega_1],\ldots, [\omega_q]$ is
a basis of $H^2(\widetilde{S})$.

Also the description of the monodromy map simplifies as shown in \cite{CrFe2}. Let $\sigma$ be a 2-sphere in $S$, with north pole
$P_N$ at $x$ and $t\in\nu_x$. Consider a smooth family $\sigma_{\epsilon}$, defined for $\epsilon$ small, of leafwise 2-spheres such that
$\sigma_0=\sigma$ and the vector $\dot{\sigma}_{0}(P_N)$ represents $t$. Then the monodromy map on $\sigma$ is:
\begin{equation}\label{monodromy-regular}
 \partial(\sigma)(t)= \frac{d}{d\epsilon} _{|_{\epsilon= 0}}\int_{\sigma_{\epsilon}} \omega_{\epsilon}.
\end{equation}

For instance, when $S$ is simply connected, then by Reeb stability $M= S\times \mathbb{R}^q$ with the trivial foliation so that the Poisson
structure on $M$ is determined by a family $\{\omega_t\}$ of symplectic forms with $\omega_0= \omega_{S}$. Then $\omega_{k}$ is simply
\[\omega_k=\frac{\partial}{\partial t_k}_{|t=0} \omega_t.\]

This class of examples reveals again the necessity of the condition (\ref{subtle-cond}). In order to see this, observe that the other conditions
of the theorem are equivalent to the first three conditions of Proposition \ref{Prop_restatement_2}, whereas the first two are automatically
satisfied in this case. By the third condition, that $\mathcal{N}_x$ is a lattice in $\nu_x^*$, we can choose a basis of $\nu_x^*$ which is also
a $\mathbb{Z}$-basis of $\mathcal{N}_x$. As a result, the $[\omega_i]$'s belong to $H^2(S;\mathbb{Z})$ and are linearly independent. If the
fourth condition is not satisfied, we can find a closed $2$-form $\lambda$, with $[\lambda]\in H^2(S;\mathbb{Z})$ and which is not in the span
of $[\omega_1], \ldots, [\omega_q]$. The Poisson structure corresponding to the family of $2$-forms
\[\omega_t:=\omega_S+t_1\omega_1+\ldots+t_q\omega_q+ t_{1}^{2} \lambda \]
satisfies all the conditions of the theorem except for (\ref{subtle-cond}), but it is not linearizable around $S$. Otherwise, we could find a
diffeomorphism of the form $(x,t)\mapsto(\phi_t(x), \tau(t))$ with $\tau(0)= 0$, $\phi_0(x)= x$ and such that
\[ \phi_{t}^{*}\omega_S+ \sum \tau_i(t)\phi_{t}^{*}\omega_i= \omega_S+ \sum t_i\omega_i+ t_{1}^{2}\lambda.\]
Since $\phi_{t}^{*}$ is the identity in cohomology, we get a contradiction.

Related to our Proposition \ref{main-cor}, we see that this Poisson structure is not integrable on any open neighborhood of $S$. This follows by
computing the monodromy groups using (\ref{monodromy-regular}) at $(t_1,\ldots,t_q)$; they are discrete if and only if $t_1\in\mathbb{Q}$.
\end{example}

\begin{example}[Duistermaat-Heckman variation formula] \rm
Next, we indicate the relationship of our theorem with the theorem of Duistermaat and Heckman % on the linear variation in cohomology of the
% reduced symplectic forms
from \cite{DH}. We first recall their result. Let $(M, \omega)$ be a symplectic manifold endowed with a Hamiltonian
action of a torus $T$ with proper moment map $J: M\rmap \mathfrak{t}^*$ and let $\xi_0\in \mathfrak{t}^*$ be a regular value of $J$. Assume
that the action of $T$ on $J^{-1}(\xi_0)$ is free. Let $U$ be a ball around $\xi_0$ consisting of regular values of $J$. The
symplectic quotients
\[ S_{\xi}:= J^{-1}(\xi)/T \]
come with symplectic forms denoted $\sigma_{\xi}$, $\xi\in U$. There are canonical isomorphisms
\[ H^2(S_{\xi})\cong H^2(S_{\xi_0})\]
for $\xi\in U$ and the Duistermaat-Heckman theorem asserts that, in cohomology,
\[ [\sigma_{\xi}]= [\sigma_{\xi_0}]+ \langle c, \xi- \xi_0\rangle, \]
where $c$ is the Chern class of the $T$-bundle $J^{-1}(\xi_0)\rmap S_{\xi_0}$. This is related to our theorem applied to the Poisson manifold
$N/ T$, where $N:=J^{-1}(U)$. The symplectic leaves of $N/T$ are precisely the $S_{\xi}$'s and $N/T$ is integrable by the symplectic groupoid
(see Proposition 4.6 in \cite{FeOrRa})
\[\mathcal{G}:=(N\times_J N)/T\rightrightarrows N/T,\]
with symplectic structure induced by $pr_1^*(\omega)-pr_2^*(\omega)\in \Omega^2(N\times N)$. The isotropy groups of $\mathcal{G}$ are all
isomorphic to $T$ and the $s$-fibers are isomorphic (as principal $T$-bundles) to the fibers of $J$. As shown in \cite{DH}, if $U$ is small
enough all fibers of $J$ are diffeomorphic as $T$-bundles, so if we are assuming that $J^{-1}(\xi_0)$ is 1-connected, then $\mathcal{G}$ is the
1-connected symplectic groupoid integrating $N/T$. In particular the Poisson homotopy bundle corresponding to $S_{\xi_0}$ is the $T$-bundle
$J^{-1}(\xi_0)\rmap S_{\xi_0}$, hence the Chern class $c$ is the same appearing in the construction of the local model (see Example
\ref{torus-bundle}) and is also the monodromy map (\ref{monodromy-regular}), interpreted as a cohomology class with coefficients in $\nu_{x}=
\mathfrak{t}$. In this case the condition $H^2(J^{-1}(\xi_0))=0$ is not required, since we can apply directly Proposition \ref{main-cor}.
\end{example}

\begin{example}\label{example-linear-Poisson}\rm
Consider $(\mathfrak{g}^*,\pi_{\mathrm{lin}})$, the dual of a Lie algebra $\mathfrak{g}$, endowed with the linear Poisson structure. Let $\xi\in
\mathfrak{g}^{*}$. As we have seen in Example \ref{P-bdle-ex1}, the leaf through $\xi$ is the coadjoint orbit $\mathcal{O}_{\xi}$
and the corresponding Poisson homotopy bundle is $P_{\xi}= G$, the 1-connected Lie group integrating $\mathfrak{g}$. So the hypothesis of our
theorem reduces to the condition that $\mathfrak{g}$ is semi-simple of compact type.
Note also that the resulting local form around $\mathcal{O}_{\xi}$ implies the linearizability of the transversal Poisson structure \cite{Wein} to $\mathcal{O}_{\xi}$, which fails for general Lie algebras \cite{Wein}-Errata. %, \cite{CuRo, Cruz}. %Our theorem shows that it does hold if $\mathfrak{g}$ is semi-simple of compact type.

Of course, one may wonder about a direct argument. This is possible- and actually reveals a slightly weaker hypothesis: one needs that
$\mathcal{O}_{\xi}$ is an embedded submanifold and that $\xi\in\mathfrak{g}$ is split in the sense that there is a $G_{\xi}$-invariant
projection $p:\mathfrak{g}\rmap \mathfrak{g}_{\xi}$. For the details, we use \cite{SymplecticFibrations} and \cite{Montgomery}. Observe that the
normal bundle of $\mathcal{O}_{\xi}$ is isomorphic to $G\times_{G_{\xi}}\mathfrak{g}_{\xi}^*$. The projection $p$ induces a tubular
neighborhood:
\begin{equation*}
\varphi:G\times_{G_{\xi}}\mathfrak{g}_{\xi}^{*}\to\mathfrak{g}^{*},\quad
[g,\eta]\to Ad_{g^{-1}}^*(\xi+p^{*}(\eta)),
\end{equation*}
and a $G$-invariant principle connection on $G\rmap \mathcal{O}_{\xi}$:
\[\theta\in\Omega^1(G;\mathfrak{g}_{\xi}), \quad \theta_g=l_{g^{-1}}^*(p).\]
Let $\Omega\in \Omega^2(G\times\mathfrak{g}^*_{\xi})$ be the resulting 2-form (see subsection \ref{The local model}). The nondegeneracy locus of
$\Omega$ can be described more explicitly. Let $\mathcal{N}\subset \mathfrak{g}_{\xi}^*$ be the set of points $\eta\in \mathfrak{g}_{\xi}^*$,
for which the coadjoint orbit through $\xi+p^*(\eta)$ and the affine space $\xi+p^*(\mathfrak{g}_{\xi}^*)$ are transversal at $\xi+p^{*}(\eta)$.
section 2.3.1 and Theorem 2.3.7 in \cite{SymplecticFibrations} show that:
\begin{itemize}
\item $G\times\mathcal{N}$ is the open in $G\times\mathfrak{g}^*_{\xi}$ on which $\Omega$ is nondegenerate,
\item $G\times_{G_{\xi}}\mathcal{N}$ is the open in $G\times_{G_{\xi}}\mathfrak{g}^*_{\xi}$ on which the differential of
$\varphi$ is invertible.
\end{itemize}
Let $\pi_{\xi}$ denote the Poisson structure on $G\times_{G_{\xi}}\mathcal{N}$ obtained by reduction. Theorem 1 from section 1.3 in
\cite{Montgomery} shows that $\varphi$ is a Poisson map
\[\varphi:(G\times_{G_{\xi}}\mathcal{N},\pi_{\xi})\rmap(\mathfrak{g}^*,\pi_{\mathrm{lin}}).\]
We deduce that, around the embedded $\mathcal{O}_{\xi}$, $\varphi$ provides a Poisson diffeomorphism.
\end{example}

%%%%%%%%%%%%%%%%%%%%%%%%%%%%%%%%%%%%%%%%%%%%%%%%%%
%%%%%%%%%%%%%%%%%%%%%%%%%%%%%%%%%%%%%%%%%%%%%%%%%%
%%%%%%%%%%%%%%%%%%%%%%%%%%%%%%%%%%%%%%%%%%%%%%%%%%
%%%%%%%%%%%%%%%%%%%%%%%%%%%%%%%%%%%%%%%%%%%%%%%%%%
%%%%%%%%%%%%%%%%%%%%%%%%%%%%%%%%%%%%%%%%%%%%%%%%%%
%%%%%%%%%%%%%%%%%%%%%%%%%%%%%%%%%%%%%%%%%%%%%%%%%%
%%%%%%%%%%%%%%%%%%%%%%%%%%%%%%%%%%%%%%%%%%%%%%%%%%
%%%%%%%%%%%%%%%%%%%%%%%%%%%%%%%%%%%%%%%%%%%%%%%%%%
%%%%%%%%%%%%%%%%%%%%%%%%%%%%%%%%%%%%%%%%%%%%%%%%%%
\section{Poisson structures around a symplectic leaf: the algebraic framework}
\label{Poisson structures around a symplectic leaf: the algebraic framework}
%%%%%%%%%%%%%%%%%%%%%%%%%%%%%%%%%%%%%%%%%%%%%%%%%%
%%%%%%%%%%%%%%%%%%%%%%%%%%%%%%%%%%%%%%%%%%%%%%%%%%
%%%%%%%%%%%%%%%%%%%%%%%%%%%%%%%%%%%%%%%%%%%%%%%%%%
%%%%%%%%%%%%%%%%%%%%%%%%%%%%%%%%%%%%%%%%%%%%%%%%%%
%%%%%%%%%%%%%%%%%%%%%%%%%%%%%%%%%%%%%%%%%%%%%%%%%%
%%%%%%%%%%%%%%%%%%%%%%%%%%%%%%%%%%%%%%%%%%%%%%%%%%
%%%%%%%%%%%%%%%%%%%%%%%%%%%%%%%%%%%%%%%%%%%%%%%%%%
%%%%%%%%%%%%%%%%%%%%%%%%%%%%%%%%%%%%%%%%%%%%%%%%%%
%%%%%%%%%%%%%%%%%%%%%%%%%%%%%%%%%%%%%%%%%%%%%%%%%%

In this section we discuss the algebraic framework which encodes the behavior of Poisson structures around symplectic leaves (an improvement
of the framework of \cite{CrFe-stab}). This will allow us to regard the local model as a first order approximation and to
produce smooth paths of Poisson bivectors (to be used in the next section).

Since we are interested in the local behavior of Poisson structures around an embedded symplectic leaf, we may restrict our attention to a
tubular neighborhood. Throughout this section $p: E\rmap S$ is a vector bundle over a manifold $S$. Consider the vertical sub-bundle and the
space of vertical multi-vector fields on $E$ denoted
\[ V:=\ker(dp)\subset TE,\ \ \ \ \ \ \mathfrak{X}^{\bullet}_{\mathrm{V}}(E)=\Gamma(\wedge^{\bullet} V) \subset \mathfrak{X}^{\bullet}(E).\]
We have that $\mathfrak{X}^{\bullet}_{\mathrm{V}}(E)$ is a graded Lie sub-algebra of the space $\mathfrak{X}^{\bullet}(E)$ of multivector fields
on $E$, with respect to the Schouten bracket. The Lie algebra grading is:
\[ \textrm{deg}(X):= |X|-1= q- 1\ \ \ \textrm{for}\ X\in \mathfrak{X}^{q}(E).\]
Given a vector bundle $F$ over $S$, denote the space of $F$-valued forms on $S$ by:
\[ \Omega^{\bullet}(S, F):= \Gamma(\Lambda^{\bullet}T^*S\otimes F)= \Omega^{\bullet}(S)\otimes_{C^{\infty}(S)}\Gamma(F).\]
More generally, for any $C^{\infty}(S)$-module $\mathfrak{X}$, denote by
\[\Omega^{\bullet}(S, \mathfrak{X}):=\Omega^{\bullet}(S)\otimes_{C^{\infty}(S)} \mathfrak{X},\]
the space of antisymmetric forms on $S$ with values in $\mathfrak{X}$.

%%%%%%%%%%%%%%%%%%%%%%%%%%%%%%%%%%%%%%%%%%%%%%%%%%
%%%%%%%%%%%%%%%%%%%%%%%%%%%%%%%%%%%%%%%%%%%%%%%%%%
%%%%%%%%%%%%%%%%%%%%%%%%%%%%%%%%%%%%%%%%%%%%%%%%%%
%%%%%%%%%%%%%%%%%%%%%%%%%%%%%%%%%%%%%%%%%%%%%%%%%%
%%%%%%%%%%%%%%%%%%%%%%%%%%%%%%%%%%%%%%%%%%%%%%%%%%
%%%%%%%%%%%%%%%%%%%%%%%%%%%%%%%%%%%%%%%%%%%%%%%%%%
\subsection{The graded Lie algebra $(\widetilde{\Omega}_E,[\cdot,\cdot]_{\ltimes})$ and horizontally non-degenerate Poisson structures}
 \label{The graded Lie algebra}
 %%%%%%%%%%%%%%%%%%%%%%%%%%%%%%%%%%%%%%%%%%%%%%%%%%
%%%%%%%%%%%%%%%%%%%%%%%%%%%%%%%%%%%%%%%%%%%%%%%%%%
%%%%%%%%%%%%%%%%%%%%%%%%%%%%%%%%%%%%%%%%%%%%%%%%%%
%%%%%%%%%%%%%%%%%%%%%%%%%%%%%%%%%%%%%%%%%%%%%%%%%%
%%%%%%%%%%%%%%%%%%%%%%%%%%%%%%%%%%%%%%%%%%%%%%%%%%
%%%%%%%%%%%%%%%%%%%%%%%%%%%%%%%%%%%%%%%%%%%%%%%%%%

We first recall the graded Lie algebra $\Omega_{E}$ of \cite{CrFe-stab}. We introduce $\Omega_{E}$ as the bi-graded vector space whose elements
of bi-degree $(p, q)$ are $p$-forms on $S$ with values in the $C^{\infty}(S)$-module $\mathfrak{X}^{q}_{\mathrm{V}}(E)$:
\[\Omega^{p,q}_{E}=\Omega^{p}(S, \mathfrak{X}^{q}_{\mathrm{V}}(E)).\]
The $\mathbb{Z}$- grading is $\textrm{deg}= p+q-1$ on $\Omega^{p,q}_{E}$, and the bracket is
\[[\varphi\otimes X,\psi\otimes
Y]=(-1)^{|\psi|(|X|-1)}\varphi\wedge\psi\otimes[X,Y].\]
We will need an enlargement $\widetilde{\Omega}_E$ of $\Omega_E$. As a bi-graded vector space, it is
\[ \widetilde{\Omega}_{E}= \Omega^{\bullet}(S, \mathfrak{X}^{\bullet}_{\mathrm{V}}(E)) + \Omega^{\bullet}(S, \mathfrak{X}_{\mathrm{P}}(E)) \subset \Omega^{\bullet}(S, \mathfrak{X}^{\bullet}(E)), \]
where $\mathfrak{X}_{\mathrm{P}}(E)$ is the space of \textbf{projectable vector fields} on $E$, i.e. vector fields $X\in \mathfrak{X}(E)$ with
the property that there is a vector field on $S$, denoted $p_S(X)\in \mathfrak{X}(S)$, such that $dp(X)=p_S(X)$. Hence, in bi-degree $(p, q)$ we
have
\begin{eqnarray*}
\nonumber \widetilde{\Omega}_{E}^{p,q}= \left\{
\begin{array}{rl}
\Omega^{p}(S, \mathfrak{X}^{q}_{\mathrm{V}}(E)) & \text{if } q\neq 1\\
\Omega^{p}(S, \mathfrak{X}_{\mathrm{P}}(E)) & \text{if } q = 1\\
\end{array} \right..
\end{eqnarray*}

The relationship between $\widetilde{\Omega}_E$ and $\Omega_E$ is similar to the one between the Lie algebras $\mathfrak{X}_{V}(E)$ and
$\mathfrak{X}_{\mathrm{P}}(E)$: we have a short exact sequence of vector spaces:
%. The last two fit into an exact sequence of Lie algebras
% \[0\rmap\mathfrak{X}_{\mathrm{V}}(E)\rmap \mathfrak{X}_{\mathrm{P}}(E)\stackrel{p_S}{\rmap}\mathfrak{X}(S)\to 0.\]
% As a consequence, $\Omega_E$ and $\widetilde{\Omega}_E$ also fit into an exact sequence of vector spaces:
\[ 0\rmap \Omega_E\rmap \widetilde{\Omega}_E\stackrel{p_S}{\rmap} \Omega(S, TS) \rmap 0.\]
Next, we show that this is naturally a sequence of graded Lie algebras. On $\Omega(S, TS)$ we consider the
\textbf{Fr\"{o}hlicher-Nijenhuis-Bracket}, denoted $[\cdot, \cdot]_{F}$, which we recall using section 13 of \cite{Kumpera-Spencer}. The
key-point is that $\Omega(S, TS)$ can be identified with the space of derivations of the graded algebra $\Omega(S)$, which commute with the
DeRham differential, and, as a space of derivations, it inherits a natural Lie bracket. In more detail, for $u= \alpha\otimes X\in \Omega^r(S,
TS)$, the operator $\mathcal{L}_u:= [i_u, d]$ on $\Omega(S)$ is given by:
\[ \mathcal{L}_{u}(\omega)= \alpha\wedge \mathcal{L}_{X}(\omega)+ (-1)^r d\alpha \wedge i_X(\omega) .\]
The resulting commutator bracket on $\Omega(S, TS)$ is:
\[ [u, v]_{F}= \mathcal{L}_{u}(\beta)\otimes
Y-(-1)^{rs}\mathcal{L}_{v}(\alpha)\otimes X+ \alpha\wedge\beta\otimes[X,Y]\] for $u= \alpha\otimes X\in \Omega^r(S, TS)$, $v= \beta\otimes Y\in
\Omega^s(S, TS)$. With these $(\Omega(S, TS),[\cdot,\cdot]_F)$ is a graded Lie algebra, where the grading $\textrm{deg}= r$ on $\Omega^r(S, TS)$.
% is given by \[ \textrm{deg}(u)=r, \textrm{ for }u\in \Omega^r(S, TS).\]
Consider the element corresponding to the identity map of $TS$, denoted by:
\[ \gamma_S\in \Omega^1(S, TS)\]
Then $\gamma_S$ is central in $(\Omega(S, TS),[\cdot,\cdot]_{F})$ and it represents the DeRham differential:
\begin{equation} \label{gamma-represents-d}
\mathcal{L}_{\gamma_S}=d:\Omega^{\bullet}(S)\rmap \Omega^{\bullet +1}(S).
\end{equation}

Next, the operations involving $\Omega(S, TS)$ have the following lifts to $E$:
\begin{itemize}
\item With the short exact sequence
\[ 0\rmap \Omega(S, \mathfrak{X}_{\mathrm{V}}(E))\rmap \Omega(S, \mathfrak{X}_{\mathrm{P}}(E))\stackrel{p_S}{\rmap} \Omega(S, TS)\rmap 0\]
in mind, there is a natural lift of $[\cdot, \cdot]_{F}$ to the middle term,
which we denote by the same symbol. Actually, realizing
\[\Omega(S, \mathfrak{X}_{\mathrm{P}}(E))\stackrel{p^*}{\hookrightarrow} \Omega(E, TE),\]
this is just the restriction of the Fr\"{o}hlicher-Nijenhuis-Bracket on $\Omega(E, TE)$.
\item The action $\mathcal{L}$ of $\Omega(S, TS)$ on $\Omega(S)$ induces an action of
$\Omega(S, \mathfrak{X}_{\mathrm{P}}(E))$ on $\Omega_E$, for
$u=\alpha\otimes X\in \Omega(S, \mathfrak{X}_{\mathrm{P}}(E))$ and
$v=\omega\otimes Y\in \Omega_E$, we have:
\[ \mathcal{L}_{u}(v)= \mathcal{L}_{p_S(u)}(\omega)\otimes Y+ \alpha\wedge \omega\otimes [X, Y].\]
\end{itemize}

Putting everything together, the following is straightforward:

\begin{proposition}
The following bracket defines a graded Lie algebra on $\widetilde{\Omega}_{E}$:
\[[u,v]_{\ltimes}=\left\{
\begin{array}{cc}
[u,v]&\textrm{for } u, v\in \Omega_E, \\
\mathcal{L}_{u}(v) &
\textrm{for } u\in \Omega(S, \mathfrak{X}_{\mathrm{P}}(E)), v\in\Omega_E,\\
\phantom{} [u,v]_{F} &
\textrm{for } u,v\in \Omega(S, \mathfrak{X}_{\mathrm{P}}(E)),
\end{array}
 \right.
\]
Moreover, we have a short exact sequence of graded Lie algebras:
\[0\to(\Omega_{E}^{\bullet},[\cdot,\cdot])\to (\widetilde{\Omega}^{\bullet}_E,[\cdot,\cdot]_{\ltimes})\stackrel{p_S}{\to} (\Omega^{\bullet}(S, TS),[\cdot,\cdot]_F)\to 0.\]
\end{proposition}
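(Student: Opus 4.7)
The plan is to verify four things in order: (i) the three prescriptions agree on the overlap of their domains and produce an element of $\widetilde{\Omega}_E$; (ii) $[\cdot,\cdot]_\ltimes$ is graded skew-symmetric; (iii) it satisfies the graded Jacobi identity; (iv) inclusion and $p_S$ are graded Lie algebra morphisms.

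For (i), the only overlap is $\Omega_E\cap \Omega(S,\mathfrak{X}_P(E))=\Omega(S,\mathfrak{X}_V(E))$, corresponding to $q=1$ projectable vector fields that are in fact vertical. If $u$ is of this form, then $p_S(u)=0$, so the Fr\"ohlicher-Nijenhuis formula collapses to $\alpha\wedge\beta\otimes[X,Y]$, matching the Schouten-based bracket on $\Omega_E$; the same reduction applies to $\mathcal{L}_u = [i_u,d]$ acting on $\Omega_E$. The containment $[u,v]_\ltimes\in \widetilde{\Omega}_E$ reduces, in the mixed case, to showing that the Schouten bracket of a projectable vector field $X\in \mathfrak{X}_P(E)$ with a vertical multivector $Y\in \mathfrak{X}^q_V(E)$ is again vertical. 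In local bundle coordinates $(x^\alpha,y^i)$, projectability means that the $\partial_{x^\alpha}$-coefficients of $X$ depend only on $x$, so no horizontal components are generated upon taking the Schouten bracket with a $\partial_{y^{\bullet}}$-polyvector; thus $\mathcal{L}_u(v)\in \Omega_E$ as required.

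For (ii), skew-symmetry is immediate in the purely vertical and purely projectable cases (Schouten and Fr\"ohlicher-Nijenhuis being graded skew), and in the mixed case it amounts to $\mathcal{L}_u(v) = -(-1)^{(|u|-1)(|v|-1)}\mathcal{L}_v(u)$ shifted by the appropriate form-degree sign, which follows from the local expression above. For (iii), by trilinearity it suffices to check the graded Jacobi identity with each argument chosen in $\Omega_E$ or in $\Omega(S,\mathfrak{X}_P(E))$. The all-vertical case is the already-established Jacobi for $(\Omega_E,[\cdot,\cdot])$. The all-projectable case follows by viewing $\Omega(S,\mathfrak{X}_P(E))\hookrightarrow \Omega(E,TE)$ via $p^*$, where Jacobi for $[\cdot,\cdot]_F$ is classical. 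The two mixed cases reduce to the two statements
\[
\mathcal{L}_{[u,v]_F}=[\mathcal{L}_u,\mathcal{L}_v],\qquad \mathcal{L}_u[w,w']=[\mathcal{L}_u(w),w']\pm [w,\mathcal{L}_u(w')],
\]
i.e.\ that $u\mapsto \mathcal{L}_u$ is a graded Lie algebra morphism from $(\Omega(S,\mathfrak{X}_P(E)),[\cdot,\cdot]_F)$ into graded derivations of $(\Omega_E,[\cdot,\cdot])$. Both identities hold at the level of the underlying Lie derivative of projectable vector fields on vertical multivectors (the first because Lie derivative of multivectors corresponds to Schouten bracket, so this is just Schouten-Jacobi; the second because Lie derivative is a derivation of Schouten), and then pass to $\Omega(S)$-valued forms by the same tensor-product extension used to define $[\cdot,\cdot]_F$ itself, with the signs matching the bigraded convention $\mathrm{deg}=p+q-1$.

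Finally, for (iv), the inclusion $\Omega_E\hookrightarrow \widetilde{\Omega}_E$ is a Lie algebra map by the first line of the definition. The projection $p_S$ intertwines the brackets: on pairs in $\Omega_E$ both sides vanish; on mixed pairs $p_S(\mathcal{L}_u(v))=0=[p_S(u),0]_F$ using $\mathcal{L}_u(v)\in \Omega_E$ shown in (i); on purely projectable pairs this is the definition of $[\cdot,\cdot]_F$ on $\Omega(S,TS)$. I expect the main obstacle to be Step (iii), the mixed Jacobi identities: the computation is essentially the semi-direct-product identity $\mathfrak{X}(M)\ltimes \Omega(M)$ adapted to a bigraded setting, and the sign bookkeeping between the $p$-degree from $\Omega^\bullet(S)$ and the $(q{-}1)$-shift from $\mathfrak{X}_V^q(E)$ is where the proof is likely to consume the most effort.
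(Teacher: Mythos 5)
Your verification is correct and amounts to exactly the routine checks the paper elides by declaring the proposition ``straightforward'': consistency of the three formulas on the overlap $\Omega(S,\mathfrak{X}_{\mathrm{V}}(E))\subset\Omega(S,\mathfrak{X}_{\mathrm{P}}(E))$, graded skew-symmetry, the semidirect-product-style reduction of the Jacobi identity to $\mathcal{L}_{[u,v]_F}=[\mathcal{L}_u,\mathcal{L}_v]$ plus the derivation property, and the compatibility of the inclusion and $p_S$ with the brackets. The one substantive point, that $[X,Y]$ is again vertical for $X$ projectable and $Y$ a vertical multivector (so that $\mathcal{L}_u(v)$ really lands in $\Omega_E$), you justify correctly in local coordinates.
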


We encode a bit more of the structure of the algebra $(\widetilde{\Omega}_E,[\cdot,\cdot]_{\ltimes})$ in the following lemma, whose proof is
also straightforward:

\begin{lemma}\label{OmegaS-Central}
Identifying $\Omega(S)\cong p^*(\Omega(S))\subset \widetilde{\Omega}_E$, $\Omega(S)$ is a central ideal in $\Omega_E$. The induced
representation of $\widetilde{\Omega}_E$ on $\Omega(S)$ factors through $p_S$, i.e.\
\[[u,\omega]_{\ltimes}=\mathcal{L}_u(\omega)=\mathcal{L}_{p_S(u)}(\omega),\  (\forall)\ u\in \widetilde{\Omega}_E,\  \omega\in\Omega(S).\]
\end{lemma}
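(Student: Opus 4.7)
The plan is to unwind the embedding $\Omega(S)\hookrightarrow \widetilde{\Omega}_E$ and then do a short case analysis on the bracket formula. First I would make the embedding explicit: an element $\omega\in \Omega^p(S)$ sits in bi-degree $(p,0)$ as the tensor $\omega\otimes 1\in \Omega^p(S)\otimes_{C^\infty(S)}C^\infty(E)=\Omega^{p,0}_E\subset \Omega_E$, the ``$1$'' being the constant function $1\in C^\infty(E)=\mathfrak{X}^0_{\mathrm{V}}(E)$. In particular $\Omega(S)$ is contained in the subalgebra $\Omega_E$, and $p_S$ vanishes on it (as it does on all of $\Omega_E$).

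For centrality in $\Omega_E$, I would apply the product formula $[\varphi\otimes X,\psi\otimes Y]_\ltimes= (-1)^{|\psi|(|X|-1)}\varphi\wedge\psi\otimes[X,Y]$ to $u=\omega\otimes 1$ and an arbitrary generator $v=\psi\otimes Y$ with $Y\in \mathfrak{X}^q_{\mathrm{V}}(E)$. The Schouten bracket of the constant function $1$ with any multivector $Y$ is $\pm i_{d1}Y=0$, so $[u,v]_\ltimes=0$. By bilinearity this extends to all of $\Omega_E$, giving that $\omega$ (and hence $\Omega(S)$) is central in $\Omega_E$; in particular $\Omega(S)$ is an ideal there.

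Next I would compute the action of an arbitrary $u\in \widetilde{\Omega}_E$ on $\omega\in \Omega(S)$ by splitting according to the short exact sequence $0\to \Omega_E\to \widetilde{\Omega}_E\stackrel{p_S}{\to}\Omega(S,TS)\to 0$. If $u\in \Omega_E$, then $[u,\omega]_\ltimes=0$ by the previous step, and on the other hand $p_S(u)=0$ so $\mathcal{L}_{p_S(u)}(\omega)=0$ as well. If instead $u=\alpha\otimes X$ with $X\in \mathfrak{X}_{\mathrm{P}}(E)$ projectable over $p_S(X)\in \mathfrak{X}(S)$, then by definition of the bracket on $\widetilde{\Omega}_E$ we have $[u,\omega]_\ltimes=\mathcal{L}_u(\omega\otimes 1)$, and the explicit formula for $\mathcal{L}_u$ on $\Omega_E$ gives $\mathcal{L}_{p_S(u)}(\omega)\otimes 1 + \alpha\wedge \omega\otimes [X,1]$. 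The second summand vanishes ($[X,1]=X(1)=0$), leaving $\mathcal{L}_{p_S(u)}(\omega)$ under our identification. Bilinearity completes the case analysis.

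There is no real obstacle here; the content is a bookkeeping check that the ``function part'' of any $\omega\in \Omega(S)$ is constant in the fibre direction, so it is killed by every Schouten bracket and every vertical action, and only the horizontal Lie derivative induced by $p_S$ survives. The only mild care required is in observing that $p_S$ genuinely vanishes on all bi-degrees $(p,q)$ of $\Omega_E$ (trivially when $q\neq 1$, and because vertical vector fields project to $0$ when $q=1$), which is what makes the two cases fit together into the single formula $[u,\omega]_\ltimes=\mathcal{L}_{p_S(u)}(\omega)$.
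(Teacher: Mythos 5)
Your proof is correct, and it is exactly the routine verification the paper has in mind (the paper omits the argument, declaring it ``straightforward''): under the identification $\omega\mapsto\omega\otimes 1$ the vertical factor is the constant function, so every Schouten bracket $[1,Y]$ and every term $[X,1]=X(1)$ vanishes, leaving only $\mathcal{L}_{p_S(u)}(\omega)$. Your closing observation that $p_S$ vanishes on all of $\Omega_E$ (including bi-degree $(p,1)$, where vertical fields project to zero) is precisely what reconciles the two cases into the single stated formula.
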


As an illustration of the use of $\widetilde{\Omega}_E$, we look at \textbf{Ehresmann connections} on $E$. Viewing such a connection as a
$C^{\infty}(S)$-linear map which associates to a vector field $X$ on $S$ its horizontal lift to $E$, we see that it is the same thing as an
element in $\Gamma\in \widetilde{\Omega}_{E}^{1, 1}$ which satisfies $p_S(\Gamma)= \gamma_S$. Also the curvature $R_{\Gamma}$ of $\Gamma$ is
just $R_{\Gamma}=\frac{1}{2}[\Gamma,\Gamma]_{\ltimes}\in\Omega^{2,1}_E$.

We introduce the following generalization of flat Ehresmann connections (the terminology will be explained in Remark \ref{lomod-Dirac} below).

\begin{definition} A \textbf{Dirac element} on $p:E\rmap S$ is an element $\gamma\in \widetilde{\Omega}_{E}^{2}$, satisfying
\[ [\gamma, \gamma]_{\ltimes}= 0, \ \ \ \ pr_{S}(\gamma)= \gamma_S.\]
We use the following notations for the components of $\gamma$:
\begin{itemize}
\item $\gamma^{\mathrm{v}}$ for the $(2, 0)$ component- an element in $\mathfrak{X}_{\mathrm{V}}^{2}(E)$.
\item $\Gamma_{\gamma}$ for the $(1, 1)$ component- an Ehresmann connection on $E$.
\item $\mathbb{F}_{\gamma}$ for the $(0, 2)$ component- an element in $\Omega^2(S, C^{\infty}(E))= \Gamma(p^*\Lambda^2T^*S)$.
\end{itemize}
The \textbf{Poisson support} of $\gamma$ is the set of points $e\in E$ at which $\mathbb{F}_{\gamma}$ is non-degenerate.
\end{definition}

The relevance of such elements to the study of Poisson structures around a symplectic leaf comes from the fact that, while $E$ plays the role of
small tubular neighborhoods, on such $E$'s the following condition will be satisfied.

%Poisson structures will automatically satisfy the following condition.

\begin{definition} A bivector $\theta\in \mathfrak{X}^2(E)$ is called \textbf{horizontally non-degenerate}
if
\[ V_e+ \theta^{\sharp}(V_{e}^{\circ})= T_eE, \ (\forall) \ e\in E,\]
 where $V_{e}^{\circ}\subset T^{*}_{e}E$ is the annihilator of $V_e$ in $T^*_eE$.
\end{definition}

Moreover, Vorobjev's Theorem 2.1 in \cite{Vorobjev} can be summarized in the following:

\begin{proposition}\label{Dirac-Poisson} There is a 1-1 correspondence between
\begin{enumerate}
\item Dirac elements $\gamma\in \widetilde{\Omega}_{E}^2$ with support equal to $E$.
\item Horizontally non-degenerate Poisson structures $\theta$ on $E$.
\end{enumerate}
\end{proposition}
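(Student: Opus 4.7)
The plan is to make explicit mutually inverse constructions on the level of the data, and then match the two integrability conditions component by component.

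Given an Ehresmann connection on $E$, we have a decomposition $TE=V\oplus H$, and dually $T^*E=H^\circ\oplus V^\circ$, with canonical identifications $V^\circ\cong H^*\cong p^*T^*S$ and $H^\circ\cong V^*$. First I would construct, from a Dirac element $\gamma=(\gamma^v,\Gamma_\gamma,\mathbb{F}_\gamma)$ with $\mathrm{supp}(\gamma)=E$, a horizontally non-degenerate Poisson bivector. The connection part $\Gamma_\gamma$ supplies $H$, and the assumption that $\mathbb{F}_\gamma\in\Gamma(p^*\Lambda^2 T^*S)$ is everywhere non-degenerate lets me invert it fibrewise to obtain a section $\mathbb{F}_\gamma^{-1}\in\Gamma(\Lambda^2 H)$. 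Setting $\theta:=\gamma^v+\mathbb{F}_\gamma^{-1}$, one checks $\theta^\sharp(V^\circ)=H$, so $V+\theta^\sharp(V^\circ)=TE$ — horizontal non-degeneracy.

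For the converse, start with horizontally non-degenerate $\theta$ and define $H:=\theta^\sharp(V^\circ)$. A dimension count shows $\dim H\le\dim V^\circ=\dim S$, while $V+H=TE$ forces $\dim H\ge\dim S$; hence $\dim H=\dim S$ and $V\cap H=0$, so $H$ is a genuine Ehresmann connection $\Gamma_\gamma$. The inclusion $\theta^\sharp(V^\circ)\subset H$ means the ``mixed'' $V$--$H$ component of $\theta$ relative to the splitting $TE=V\oplus H$ vanishes, so $\theta=\theta^v+\theta^h$ with $\theta^v\in\Gamma(\Lambda^2 V)$ and $\theta^h\in\Gamma(\Lambda^2 H)$. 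Set $\gamma^v:=\theta^v$ and $\mathbb{F}_\gamma:=(\theta^h)^{-1}\in\Gamma(\Lambda^2 V^\circ)=\Gamma(p^*\Lambda^2 T^*S)$. The two constructions are visibly inverse at the level of the triple of components.

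It remains to show that under this correspondence $[\gamma,\gamma]_\ltimes=0$ if and only if $[\theta,\theta]=0$. Since $\gamma$ has total degree $1$ with components of bidegrees $(0,2)$, $(1,1)$, $(2,0)$, the bracket $[\gamma,\gamma]_\ltimes$ has total degree $2$ and splits into four pieces of bidegrees $(0,3)$, $(1,2)$, $(2,1)$, $(3,0)$. Using the definition of $[\cdot,\cdot]_\ltimes$ (internal Schouten on $\Omega_E$, Lie-derivative action of projectable vector fields, Fr\"ohlicher--Nijenhuis bracket on $\Omega(S,TS)$) together with Lemma \ref{OmegaS-Central}, each piece can be written in terms of $\gamma^v$, $\Gamma_\gamma$, $\mathbb{F}_\gamma$: the $(0,3)$ piece is the self-Jacobi of $\gamma^v$; the $(1,2)$ piece expresses that horizontal lifts preserve $\gamma^v$ up to a term controlled by the curvature; the $(2,1)$ piece relates the curvature of $\Gamma_\gamma$ to $\mathbb{F}_\gamma$ via $\gamma^v$; and the $(3,0)$ piece is a horizontal closedness condition on $\mathbb{F}_\gamma$. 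Simultaneously expanding $[\theta,\theta]=0$ relative to the splitting $TE=V\oplus H$ produces exactly four equations, one in each of these bidegrees, matching term by term.

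The main obstacle is precisely this last bracket bookkeeping: one needs to check that the Schouten bracket of $\theta$, broken up according to $V\oplus H$, reproduces the structural equations packaged by $[\gamma,\gamma]_\ltimes=0$. Since these are exactly the four Vorobjev equations from Theorem 2.1 of \cite{Vorobjev}, the argument ultimately reduces to translating Vorobjev's equations into the compact form $[\gamma,\gamma]_\ltimes=0$ using the definitions of the bracket given in this subsection; no new content is required beyond a careful dictionary.
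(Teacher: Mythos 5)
Your proposal follows essentially the same route as the paper: explicit mutually inverse constructions on the level of the data (the connection $H=\theta^{\sharp}(V^{\circ})$, vanishing of the mixed component, inversion of the horizontal part), followed by matching the four bidegree components of $[\gamma,\gamma]_{\ltimes}$ in $\Omega^{0,3}_E\oplus\Omega^{1,2}_E\oplus\Omega^{2,1}_E\oplus\Omega^{3,0}_E$ with the four structure equations of Theorem 2.1 in \cite{Vorobjev}. One small slip in your qualitative description: the $(1,2)$ component is simply $[\Gamma_{\gamma},\gamma^{\mathrm{v}}]_{\ltimes}=0$ (horizontal lifts preserve $\gamma^{\mathrm{v}}$ exactly, with no curvature correction), the curvature entering only in the $(2,1)$ component $R_{\Gamma_{\gamma}}+[\gamma^{\mathrm{v}},\mathbb{F}_{\gamma}]=0$.
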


The explicit construction of the 1-1 correspondence is important as well; we recall it below. The main
point of our proposition is that the list of complicated equations from \cite{Vorobjev} takes now the compact form $[\gamma,
\gamma]_{\ltimes}= 0$. So, let $\theta$ be a horizontally non-degenerate Poisson structure on $E$. The non-degeneracy  implies that
\[ H_{\theta}= \theta^{\sharp}(V^{\circ})\]
gives an Ehresmann connection on $E$; we denote it by $\Gamma_{\theta}\in \widetilde{\Omega}_{E}^{1, 1}$.
 With respect to the resulting decomposition of $TE$, the mixed component of $\theta$ vanishes, i.e.
\[ \theta= \theta^{\mathrm{v}}+ \theta^{\textrm{h}}\in \Lambda^2V\oplus \Lambda^2H_{\theta}.\]
The first term is the desired $(2, 0)$-component. The second one is non-degenerate, thus, after passing from $H_{\theta}$ to $TS$ and then
taking the inverse, we get an element
\[ \mathbb{F}_{\theta}\in  \Gamma(p^*\Lambda^2T^*S).\]
This will be the desired $(0, 2)$-component. Explicitly,
\begin{equation}\label{2Form}
\mathbb{F}_{\theta}(dp_e(\theta^{\textrm{h} \sharp}\eta),dp_e(\theta^{\textrm{h}
\sharp}\mu))=-\theta^{\textrm{h}}(\eta,\mu),\ \ (\forall)  \ \eta,\mu\in p^*(T^*S)_e.
\end{equation}
Altogether, the 1-1 correspondence associates to $\theta$ the element
\[ \gamma= \theta^{\mathrm{v}}+ \Gamma_{\theta}+ \mathbb{F}_{\theta} \in \widetilde{\Omega}^{2}_{E}\]
with $\mathbb{F}_{\theta}$ non-degenerate at all points of $E$ and $p_S(\gamma)= p_S(\Gamma_{\theta})= \gamma_{S}$.

\begin{proof} (of the proposition) Conversely, it is clear that we can reconstruct $\theta$ from $\gamma$.
One still has to check that $[\theta, \theta]= 0$ is equivalent to the equation $[\gamma, \gamma]_{\ltimes}= 0$. By Vorobjev's formulas (Theorem
2.1 in \cite{Vorobjev}) and their interpretation using $\Omega_E$ from \cite{CrFe-stab} (Theorem 4.2), the Poisson equation is equivalent to:
\[ [ \theta^{\mathrm{v}},\theta^{\mathrm{v}}]=0,\ [ \Gamma_{\theta},\theta^{\mathrm{v}}]_{\ltimes}=0,\ R_{\Gamma_{\theta}}+[ \theta^{\mathrm{v}}, \mathbb{F}_{\theta}]=0,\ [ \Gamma_{\theta},\mathbb{F}_{\theta}]_{\ltimes}=0 .\]
Here we have used the remark that the covariant exterior derivative (denoted in \cite{CrFe-stab} by $d_{\Gamma_{\theta}}$,
$\partial_{\Gamma_{\theta}}$ respectively) can be given by $ad_{\Gamma_{\theta}}=[\Gamma_{\theta},\cdot]_{\ltimes}:\Omega_{E}\to\Omega_E$. Finally,
% These
%4 equations can be seen as the components of various degree of a single equation:
\[0=[\gamma,\gamma]_{\ltimes}=([\theta^{\mathrm{v}},\theta^{\mathrm{v}}])+2([\Gamma_{\theta},\theta^\textrm{v}]_{\ltimes})+2(R_{\Gamma_{\theta}}+[\theta^\textrm{v},\mathbb{F}_{\theta}])+2([\Gamma_{\theta},\mathbb{F}_{\theta}]_{\ltimes})\in\]
\[\in\Omega^{0,3}_E\oplus\Omega^{1,2}_E\oplus\Omega^{2,1}_E\oplus\Omega^{3,0}_E=\Omega^{3}_E.\]
\end{proof}

\begin{remark}\label{lomod-Dirac}\rm \
Generalizing the case of Poisson structures, recall \cite{FernandesBrahic, Wade} that a Dirac structure $L\subset TE\oplus T^*E$ is called
horizontally non-degenerate if $L\cap (V\oplus V^{\circ})= \{0\}$.
The previous discussion applies with minor changes to such structures. The integrability condition (the four equations above) have been extended
to horizontally non-degenerate Dirac structures (Corollary 2.8 in \cite{FernandesBrahic} and Theorem 2.9 in \cite{Wade}). We find out that there
is a 1-1 correspondence between
\begin{itemize}
\item Dirac elements $\gamma\in \widetilde{\Omega}_{E}^{2}$.
\item Horizontally non-degenerate Dirac structures on $E$.
\end{itemize}
Moreover, in this correspondence, the support of $L$ (cf. Remark \ref{remark-Dirac}) coincides with the Poisson support of $\gamma$. Explicitly,
the Dirac structure corresponding to $\gamma$ is
\[ L_{\gamma}= \textrm{Graph}(\gamma^{\textrm{v}\sharp}:H^{\circ}\to V)\oplus \textrm{Graph}(\mathbb{F}_{\gamma}^{\sharp}:H\to V^{\circ}),\]
where we use the decomposition $TE= V\oplus H$ induced by the connection $\Gamma_{\gamma}$.
\end{remark}

Finally, we identify the Poisson cohomology complex $(\mathfrak{X}^{\bullet}(E),d_{\theta})$ ($d_{\theta}= [\theta,\cdot]$).

\begin{proposition}\label{PoissonCohomology} Let $\theta$ be a horizontally non-degenerate Poisson structure on $E$ with corresponding Dirac element $\gamma$. Then there is an isomorphism of complexes
\[ \tau_{\theta}: (\mathfrak{X}^{\bullet}(E),d_{\theta})\rmap (\Omega_{E}^{\bullet},ad_{\gamma}), \ \ \textrm{where}\  ad_{\gamma}= [\gamma, \cdot]_{\ltimes}.\]
\end{proposition}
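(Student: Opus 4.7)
The plan is to define $\tau_\theta$ as the graded vector space isomorphism induced by the bidegree decomposition of multivector fields on $E$, and then to verify the chain-map identity $\tau_\theta \circ d_\theta = ad_\gamma \circ \tau_\theta$ by a bidegree-by-bidegree calculation.

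To construct $\tau_\theta$, the connection $\Gamma_\theta$ provides the splitting $TE = V \oplus H_\theta$, which gives the decomposition
\[ \mathfrak{X}^n(E) = \bigoplus_{p+q=n}\Gamma(\wedge^p H_\theta \otimes \wedge^q V).\]
Combining the isomorphism $dp: H_\theta \cong p^*TS$ with the fiberwise isomorphism $(\theta^h)^\sharp: H_\theta^* \cong H_\theta$ (whose inverse is essentially $\mathbb{F}_\theta$), each summand is identified with $\Omega^p(S, \mathfrak{X}^q_V(E)) = \Omega_E^{p,q}$. Summing over $p+q = n$ yields the claimed bijection $\tau_\theta: \mathfrak{X}^n(E) \to \Omega_E^n$.

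To establish the intertwining with differentials, I would decompose both sides using $\theta = \theta^v + \theta^h$ and $\gamma = \theta^v + \Gamma_\theta + \mathbb{F}_\theta$. The contribution $[\theta^v, \cdot]$ transports via $\tau_\theta$ to $[\theta^v, \cdot]_\ltimes$ on $\Omega_E$ essentially by definition, since the bracket with a purely vertical bivector respects the horizontal/vertical splitting. The substantive step is to verify
\[ \tau_\theta \circ [\theta^h, \cdot] = (\mathcal{L}_{\Gamma_\theta} + [\mathbb{F}_\theta, \cdot]_\ltimes) \circ \tau_\theta,\]
i.e., that the Schouten bracket with the horizontal bivector $\theta^h$ carries both a \emph{differential} part (the covariant exterior derivative $\mathcal{L}_{\Gamma_\theta} = ad_{\Gamma_\theta}$, arising from the horizontal lift encoded in $\theta^h$) and an \emph{algebraic} part (the bracket with the dual 2-form $\mathbb{F}_\theta$). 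This can be verified on generators (functions and vector fields on $E$) in local coordinates adapted to the bundle and to a trivialization of the connection, and then extended by the graded Leibniz rules on both sides.

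The main obstacle is this splitting of $\tau_\theta \circ [\theta^h, \cdot]$: the coupling of horizontal differentiation (via $\Gamma_\theta$) and dualization (via $\mathbb{F}_\theta$) is precisely what makes Vorobjev's reformulation $[\gamma, \gamma]_\ltimes = 0$ non-trivial, and the present proposition is essentially the ``linearization in the second argument'' of that identity. Structurally, the same local-coordinate computation that establishes the four Vorobjev equations (recalled in the proof of Proposition \ref{Dirac-Poisson}) delivers the intertwining at the level of generators; the derivation property then extends it to all multivectors and completes the proof.
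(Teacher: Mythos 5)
Your construction of $\tau_{\theta}$ agrees with the explicit formula (\ref{tau-explicit}) that the paper records, namely $\tau_{\theta}=\wedge^{\bullet}f_{\theta,*}$ with $f_{\theta}=(-\mathbb{F}_{\theta}^{\sharp},id_V)$; note, however, that the paper does not verify the chain-map identity itself but simply cites Proposition 4.3 of \cite{CrFe-stab}. Your proposed verification contains a genuine gap: the term-by-term correspondence you want to check is false. The operator $[\theta^{\mathrm{v}},\cdot]_{\ltimes}$ on $\Omega_E$ is $\Omega(S)$-linear and purely of bidegree $(0,+1)$, whereas $\tau_{\theta}\circ[\theta^{\mathrm{v}},\cdot]\circ\tau_{\theta}^{-1}$ is neither. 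For instance, for $\psi\in\Omega^1(S)\subset\Omega_E^{1,0}$ one has $[\theta^{\mathrm{v}},\psi]_{\ltimes}=0$ by Lemma \ref{OmegaS-Central}, but $\tau_{\theta}^{-1}(\psi)=-(\mathbb{F}_{\theta}^{\sharp})^{-1}(\psi)=\sum f_i\,\mathrm{hor}_{\Gamma_{\theta}}(\partial_{x^i})$ has coefficients $f_i$ that depend on the fibre coordinates (through $\mathbb{F}_{\theta}^{-1}$), so $[\theta^{\mathrm{v}},\tau_{\theta}^{-1}(\psi)]=\sum[\theta^{\mathrm{v}},f_i]\wedge\mathrm{hor}_{\Gamma_{\theta}}(\partial_{x^i})$ is nonzero in general. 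Moreover $[\theta^{\mathrm{v}},\cdot]$ has a bidegree $(-1,+2)$ component, coming from $[\theta^{\mathrm{v}},\mathrm{hor}_{\Gamma_{\theta}}(X)]=-\mathcal{L}_{\mathrm{hor}_{\Gamma_{\theta}}(X)}\theta^{\mathrm{v}}$, whose vanishing is not ``by definition'' but is exactly the structure equation $[\Gamma_{\theta},\theta^{\mathrm{v}}]_{\ltimes}=0$, i.e.\ it uses $[\theta,\theta]=0$. Consequently the two intertwining identities you propose to verify separately are each false; only their sum holds, after cancellations between the $\theta^{\mathrm{v}}$- and $\theta^{\mathrm{h}}$-contributions, and those cancellations are where the content of the proposition lies.

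The sound part of your plan is the reduction to generators: $d_{\theta}$ and $ad_{\gamma}$ are degree-one derivations of the respective wedge products and $\tau_{\theta}$ is an algebra isomorphism, so it suffices to check the identity on functions and on vector fields (equivalently, on vertical fields and horizontal lifts). But at that level you must compute the whole of $\tau_{\theta}\circ[\theta,\cdot]\circ\tau_{\theta}^{-1}$ at once, organized by the bidegree of the output rather than by the splitting $\theta=\theta^{\mathrm{v}}+\theta^{\mathrm{h}}$ of the input, and you must invoke the Poisson structure equations for $\gamma$ at several points. That computation is precisely what \cite{CrFe-stab} (following Vorobjev) carries out, and your sketch as written does not yet substitute for it.
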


Again, this is a reformulation of a result of \cite{CrFe-stab}, namely of Proposition 4.3, with the remark that the operator $d_{\theta}$ in
\emph{loc.cit.} is simply our $ad_{\gamma}$. For later use, we also give the explicit description of $\tau_{\theta}$. Identifying
$\Omega_E=\Gamma(\wedge(p^{*}T^*S\oplus V))$,
\begin{equation}\label{tau-explicit}
\tau_{\theta}=\wedge^{\bullet} f_{\theta,*}: \mathfrak{X}^{\bullet}(E)\rmap \Omega^{\bullet}_E,
\end{equation}
where $f_{\theta}$ is the bundle isomorphism
\[f_{\theta}:=(-\mathbb{F}_{\theta}^{\sharp},id_V):H_{\theta}\oplus V=TE\rmap p^{*}T^*S\oplus V.\]

%%%%%%%%%%%%%%%%%%%%%%%%%%%%%%%%%%%%%%%%%%%%%%%%%%
%%%%%%%%%%%%%%%%%%%%%%%%%%%%%%%%%%%%%%%%%%%%%%%%%%
%%%%%%%%%%%%%%%%%%%%%%%%%%%%%%%%%%%%%%%%%%%%%%%%%%
%%%%%%%%%%%%%%%%%%%%%%%%%%%%%%%%%%%%%%%%%%%%%%%%%%
%%%%%%%%%%%%%%%%%%%%%%%%%%%%%%%%%%%%%%%%%%%%%%%%%%
%%%%%%%%%%%%%%%%%%%%%%%%%%%%%%%%%%%%%%%%%%%%%%%%%%
\subsection{The dilatation operators and jets along $S$}
\label{The dilatation operators and jets along $S$}
%%%%%%%%%%%%%%%%%%%%%%%%%%%%%%%%%%%%%%%%%%%%%%%%%%
%%%%%%%%%%%%%%%%%%%%%%%%%%%%%%%%%%%%%%%%%%%%%%%%%%
%%%%%%%%%%%%%%%%%%%%%%%%%%%%%%%%%%%%%%%%%%%%%%%%%%
%%%%%%%%%%%%%%%%%%%%%%%%%%%%%%%%%%%%%%%%%%%%%%%%%%
%%%%%%%%%%%%%%%%%%%%%%%%%%%%%%%%%%%%%%%%%%%%%%%%%%
%%%%%%%%%%%%%%%%%%%%%%%%%%%%%%%%%%%%%%%%%%%%%%%%%%

For $t\in\mathbb{R}$, $t\neq0$ let $m_t:E\rmap E$ be the fiberwise multiplication by $t$. Pull-back by $m_t$ induces an automorphism
\[m_t^*:(\widetilde{\Omega}_E,[\cdot,\cdot])\rmap (\widetilde{\Omega}_E,[\cdot,\cdot]).\]
It preserves $\Omega_E$ and acts as the identity on $\Omega(S)$. Define the \textbf{dilation operators}: %$\varphi_t$ as follows:
\[ \varphi_t:\widetilde{\Omega}_E\rmap \widetilde{\Omega}_E, \ \varphi_t(u)=t^{q-1}m_t^{*}(u),\textrm{ for }u\in \widetilde{\Omega}_E^{(\bullet,q)}.\]

\begin{remark}\label{phi-local-coordinates}\rm
It is useful to describe this operation in local coordinates. Choose $(x^i)$ coordinates for $S$ and $(y^a)$ linear coordinates on the fibers of
$E$. An arbitrary element in $\Omega^p(S,\mathfrak{X}^q(E))$ is a sum of elements of type
\[ a(x, y) dx^I \otimes \partial_{x^J}\wedge \partial_{y^K} \]
where $I$, $J$ and $K$ are multi-indices with $|I|= p$, $|J|+ |K|= q$ and $a=a(x, y)$ is a smooth function. Such an element is in $\Omega_{E}$
if and only if it only contains terms with $|J|= 0$. The elements in $\widetilde{\Omega}_E$ are also allowed to contain terms with $|J|= 1$, but
those terms must have $|K|=0$, and the coefficient $a$ only depending on $x$.  Applying $\varphi_t$ to such an element we find
\[ t^{|J|-1} a(x, ty) dx^I \otimes \partial_{x^J}\wedge \partial_{y^K}.\]
\end{remark}

\begin{lemma} $\varphi_t$ preserves the bi-degree, is an automorphism of the graded Lie algebra $\widetilde{\Omega}_E$ and preserves $\Omega_E$.
\end{lemma}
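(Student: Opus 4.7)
The plan is to reduce everything to two structural features of the fiberwise dilation $m_t: E\rmap E$: it is a diffeomorphism covering $\mathrm{id}_S$, and it preserves the fibers of $p$ (hence the vertical distribution $V$). Given these, $\varphi_t$ is built from $m_t^*$ by a scalar re-normalization that is chosen precisely to match the grading of the bracket.

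First I would verify bi-degree preservation. Since $p\circ m_t=p$, pullback $m_t^*$ acts trivially on the $\Omega^p(S)$-factor of $\Omega^p(S,\mathfrak{X}^q(E))$. Because $dm_t$ preserves $V$, it preserves $\mathfrak{X}^q_{\mathrm{V}}(E)$ for every $q$; and because $dp\circ dm_t=dp$, it sends projectable vector fields to projectable vector fields with the same base projection, i.e.\ $p_S\circ m_t^*=p_S$. Hence $m_t^*$ preserves each $\widetilde{\Omega}_E^{p,q}$ (and restricts to $\Omega_E$). Multiplying by $t^{q-1}$ does not alter the bi-degree, so $\varphi_t$ preserves bi-degree and stabilises $\Omega_E$.

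Second I would check that $m_t^*$ is a morphism of the graded Lie algebra $(\widetilde{\Omega}_E,[\cdot,\cdot]_{\ltimes})$. Since $m_t$ is a diffeomorphism, $m_t^*$ commutes with the Schouten bracket on $E$ and with the Lie derivative along projectable fields. Combined with $p_S\circ m_t^*=p_S$ and the fact that $m_t^*$ is the identity on pulled-back forms from $S$, one checks in each of the three cases of the bracket formula (both arguments in $\Omega_E$; one projectable and one in $\Omega_E$; both projectable) that
\[
m_t^*[u,v]_{\ltimes}=[m_t^*u,m_t^*v]_{\ltimes}.
\]

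Third, handle the scaling factor. A direct inspection of the same three bracket formulas shows that the bracket is homogeneous of bi-degree
\[
\bigl(p_1+p_2,\ q_1+q_2-1\bigr)\quad\textrm{on}\quad\widetilde{\Omega}_E^{p_1,q_1}\otimes\widetilde{\Omega}_E^{p_2,q_2},
\]
the key point being that $[X,Y]$ of a projectable vector field $X$ and a vertical $q$-vector $Y$ is again vertical (since projectable flows permute fibers). Therefore the exponents obey
\[
(q_1+q_2-1)-1=(q_1-1)+(q_2-1),
\]
and combining with the previous step gives
\[
\varphi_t[u,v]_{\ltimes}=t^{(q_1-1)+(q_2-1)}m_t^*[u,v]_{\ltimes}=[t^{q_1-1}m_t^*u,\ t^{q_2-1}m_t^*v]_{\ltimes}=[\varphi_tu,\varphi_tv]_{\ltimes}.
\]

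There is no real obstacle here; the content is the exponent identity $(q_1-1)+(q_2-1)=(q_1+q_2-1)-1$, which is precisely what forced the definition $\varphi_t=t^{q-1}m_t^*$ in the first place. As a sanity check one can rederive all three claims in coordinates from Remark \ref{phi-local-coordinates}: $\varphi_t$ multiplies a monomial $a(x,y)\,dx^I\otimes\partial_{x^J}\wedge\partial_{y^K}$ by $t^{|J|-1}$ and shifts $y\mapsto ty$, and brackets of such monomials respect the total $|J|$-exponent additively modulo the $-1$ shift.
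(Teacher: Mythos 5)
Your proposal is correct and is essentially the paper's own argument written out in detail: the paper's proof consists of exactly the two observations you verify, namely that $m_t^*$ has all three properties by functoriality, and that multiplication by $t^{q-1}$ is compatible with the bracket because of the degree shift by $1$ (your exponent identity $(q_1-1)+(q_2-1)=(q_1+q_2-1)-1$). No divergence to report.
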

\begin{proof} Due to its functoriality, $m_{t}^{*}$ has similar properties. Due to the shift degree by $1$ in the Lie degree, also the multiplication
by $t^{q-1}$ has the same properties. Hence also the composition of the two operations, i.e. $\varphi_t$, has the desired properties.
\end{proof}

Together with $\varphi_t$ we also introduce the following subspaces of $\widetilde{\Omega}_E$ for $l\in \mathbb{Z}$:% , defined for each integer $l$ by:
\[ \textrm{gr}_{l}(\widetilde{\Omega}_E)= \{ u\in \widetilde{\Omega}_E: \varphi_t(u)= t^{l-1} u\}\subset \widetilde{\Omega}_E,\]
\[ J^{l}_{S}(\widetilde{\Omega}_E)= \textrm{gr}_{0}(\widetilde{\Omega}_E)\oplus \ldots \oplus \textrm{gr}_{l}(\widetilde{\Omega}_E)\subset \widetilde{\Omega}_E.\]
These spaces vanish for $l< 0$. The elements in $\textrm{gr}_{0}$ are called \textbf{constant}, those in $\textrm{gr}_{1}$ are called
\textbf{linear}, while those in $\textrm{gr}_{l}$ are called \textbf{homogeneous of degree $l$}. Similarly one defines %the spaces
$\textrm{gr}_{l}(\Omega_E)$; one has (e.g. using local formulas- see Remark \ref{phi-local-coordinates})
\begin{equation}\label{computation-grading}
\textrm{gr}_{l}(\Omega_{E}^{p, q})= \Omega^p(S, \Lambda^qE\otimes S^lE^*),
\end{equation}
where we regard the sections of $E$ as fiberwise constant vertical vector fields on $E$ and those of $S^lE^*$ as degree $l$ homogeneous
polynomial functions on $E$. Moreover, $\textrm{gr}_{l}(\widetilde{\Omega}_{E}^{p, q})$ coincides with $\textrm{gr}_{l}(\Omega_{E}^{p, q})$
except for the case $l= 1$, $q= 1$ when
%change \[ \textrm{gr}_{1}(\Omega_{E}^{p, 1})= \Omega^p(S)\otimes \Gamma(\textrm{End}(E)),\
%change \textrm{gr}_{1}(\widetilde{\Omega}_{E}^{p, 1})= \Omega^p(S)\otimes  \mathfrak{X}_{\textrm{lin}}(E),\]
%\[ \textrm{gr}_{1}(\Omega_{E}^{p, 1})= \Omega^p(S, \textrm{End}(E)),\ \textrm{gr}_{1}(\widetilde{\Omega}_{E}^{p, 1})= \Omega^p(S, \mathfrak{X}_{\textrm{lin}}(E)),\]
\[\textrm{gr}_{1}(\widetilde{\Omega}_{E}^{p, 1})= \Omega^p(S, \mathfrak{X}_{\textrm{lin}}(E)),\]
where $\mathfrak{X}_{\textrm{lin}}(E)$ is the space of linear vector fields on $E$, i.e. projectable vector fields whose flow is fiberwise
linear.

Our next aim is to introduce the partial derivative operators along $S$,
\[ d_{S}^{l}: \widetilde{\Omega}_E\rmap \textrm{gr}_l(\widetilde{\Omega}_E),\]
used to introduce the jet operators $j^{n}_{S}$. %and then the jet operators as sum of $d_{S}^{l}$'s.
To define and handle them, we use the formal power series expansion of $t\varphi_{t}(u)$ with respect to $t$. Although $\varphi_t$ is not
defined at $t= 0$, it is clear (use again Remark \ref{phi-local-coordinates}) that, for any $u\in \widetilde{\Omega}_E$, the map
\[ \mathbb{R}^{*}\ni t\mapsto t\varphi_t(u)\in \widetilde{\Omega}_E \]
admits a smooth prolongation to $\mathbb{R}$. Hence the following definition makes sense.

\begin{definition} For $u\in \widetilde{\Omega}_E$ define the \textbf{$n$-th order derivatives} of $u$ along $S$,
denoted by $d^{n}_{S}u$, as the coefficients of the formal power expansion around $t= 0$:
\[ \varphi_{t}(u)\cong t^{-1} u|_{S}+ d_{S}u+ t d^{2}_{S} u+ \ldots .\]
In other words,
\[ d_{S}^{n}(u)= \frac{1}{n!}\frac{d^n}{dt^n}|_{t= 0} t\varphi_t(u) \in \widetilde{\Omega}_E.\]
For $n=0$ we also use the notation $u|_{S}$. Define the \textbf{$n$-th order jet} of $u$ along $S$ as
\[ j^{n}_{S}(u)= \sum_{k= 0}^{n} d_{S}^{k} (u).\]
\end{definition}

\begin{lemma}\label{TheFormula}
We have that $d_{S}^{n}(u)\in \textrm{gr}_{n}(\widetilde{\Omega}_E)$ and $j^{n}_{S}(u)\in J^{n}_{S}(\widetilde{\Omega}_E)$.
\end{lemma}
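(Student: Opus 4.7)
The plan is to exploit the multiplicative property of the dilation operators. The crucial observation is that
\[
\varphi_s\circ\varphi_t=\varphi_{st}, \qquad s,t\in\mathbb{R}^{*},
\]
which follows at once from the functoriality $m_{st}^{*}=m_t^{*}\circ m_s^{*}$ combined with the identity $s^{q-1}t^{q-1}=(st)^{q-1}$ on the subspace $\widetilde{\Omega}_E^{(\bullet,q)}$. The argument will then be a direct translation of this semigroup identity into an infinitesimal statement about Taylor coefficients.

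Writing $f(t):=t\,\varphi_t(u)$ for the smooth prolongation to $\mathbb{R}$, the definition of $d_S^n(u)$ reads $d_S^n(u)=\tfrac{1}{n!}f^{(n)}(0)$. Using the linearity of $\varphi_s$ together with multiplicativity I would first derive the functional equation
\[
\varphi_s(f(t))=\varphi_s(t\,\varphi_t(u))=t\,\varphi_{st}(u)=\tfrac{1}{s}f(st),\qquad s\neq 0.
\]
Differentiating both sides $n$ times in $t$, evaluating at $t=0$, and dividing by $n!$ immediately gives $\varphi_s(d_S^n(u))=s^{n-1}d_S^n(u)$, which is precisely the defining condition $d_S^n(u)\in\textrm{gr}_n(\widetilde{\Omega}_E)$. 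The assertion $j_S^n(u)\in J_S^n(\widetilde{\Omega}_E)$ is then immediate from the definitions, since $j_S^n(u)=\sum_{k=0}^{n}d_S^k(u)$ and $J_S^n(\widetilde{\Omega}_E)$ is by definition $\bigoplus_{k=0}^{n}\textrm{gr}_k(\widetilde{\Omega}_E)$.

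The only bookkeeping point I would need to address is that each $d_S^n(u)$ actually lies in $\widetilde{\Omega}_E$ rather than in a larger ambient bigraded space: since $\widetilde{\Omega}_E$ is a linear subspace preserved by $\varphi_t$ for every $t\neq 0$ and $t\,\varphi_t(u)$ takes values there, so do all its Taylor coefficients at the origin; in local coordinates this is transparent from Remark~\ref{phi-local-coordinates}. Beyond this I do not foresee any substantive obstacle --- the lemma is essentially a tautological consequence of multiplicativity of $\varphi$ once the functional equation above is in hand, and the bigrading decomposition $\widetilde{\Omega}_E^{p,q}=\Omega^{p}(S,\mathfrak{X}^{q}_V(E))$ (or the exceptional $q=1$ case) plays no role in the argument.
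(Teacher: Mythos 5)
Your proof is correct and is essentially identical to the paper's: both rest on the multiplicativity $\varphi_s\circ\varphi_t=\varphi_{st}$, the resulting functional equation $\varphi_s(t\,\varphi_t(u))=s^{-1}\bigl[\xi\varphi_{\xi}(u)\bigr]_{\xi=st}$, and differentiation $n$ times at $t=0$. The extra remark about the Taylor coefficients landing in $\widetilde{\Omega}_E$ is a harmless (and reasonable) addition that the paper leaves implicit.
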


\begin{proof}
Since $\varphi_{r}\circ \varphi_{s}=\varphi_{rs}$, we have that $\varphi_{r}(s\varphi_{s}(u))=r^{-1}[\xi\varphi_{\xi}(u)]_{|\xi=rs}$. Taking the
$n$-th derivative at $s=0$, we obtain the first part, which implies the second.
\end{proof}

The power series description, together with the properties of $\varphi_t$, are very useful in avoiding computations. For instance, using that
$\varphi_t$ preserves $[\cdot, \cdot]_{\ltimes}$ we obtain:

\begin{lemma}\label{Newton-formula}
For any $u, v\in \widetilde{\Omega}_E$, $d_{S}^{l}[u, v]_{\ltimes}= \sum_{p+q= l+1} [d^{p}_{S}u, d^{q}_{S}v]_{\ltimes}$.
%\[ d_{S}^{l}[u, v]_{\ltimes}= \sum_{p+q= l+1} [d^{p}_{S}u, d^{q}_{S}v]_{\ltimes} .\]
\end{lemma}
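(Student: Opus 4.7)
The plan is to deduce this formula from the fact, proved in the preceding lemma, that $\varphi_t$ is an automorphism of the graded Lie algebra $(\widetilde{\Omega}_E,[\cdot,\cdot]_{\ltimes})$, and then to read off the identity term by term from the formal Taylor expansion around $t=0$.

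First, I would start from the automorphism property $\varphi_t[u,v]_{\ltimes}=[\varphi_t u,\varphi_t v]_{\ltimes}$, valid for every $t\neq 0$, and multiply both sides by $t^2$. Using bilinearity to pull one factor of $t$ into each slot on the right, this yields
\[ t\cdot\bigl(t\varphi_t[u,v]_{\ltimes}\bigr)\;=\;[\,t\varphi_t u,\;t\varphi_t v\,]_{\ltimes}.\]
The point of this rewriting is that every occurrence of $\varphi$ now appears in the combination $t\varphi_t$, which, as observed just before the definition of $d_S^n$ (and transparent from the local coordinate description in Remark \ref{phi-local-coordinates}), extends smoothly across $t=0$ for every element of $\widetilde{\Omega}_E$.

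Next, I would substitute the formal Taylor expansions supplied by the very definition of $d_S^n$:
\[ t\varphi_t u=\sum_{p\ge 0}t^{p}\,d_S^{p}u,\quad t\varphi_t v=\sum_{q\ge 0}t^{q}\,d_S^{q}v,\quad t\varphi_t[u,v]_{\ltimes}=\sum_{n\ge 0}t^{n}\,d_S^{n}[u,v]_{\ltimes}.\]
The identity above then becomes
\[ \sum_{n\ge 0}t^{\,n+1}\,d_S^{n}[u,v]_{\ltimes}\;=\;\sum_{p,q\ge 0}t^{\,p+q}\,[\,d_S^{p}u,\,d_S^{q}v\,]_{\ltimes},\]
and comparing the coefficient of $t^{l+1}$ on each side delivers precisely $d_S^{l}[u,v]_{\ltimes}=\sum_{p+q=l+1}[d_S^{p}u,d_S^{q}v]_{\ltimes}$.

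There is no real obstacle here beyond bookkeeping: the only technical point to verify is the smoothness of $t\mapsto t\varphi_t(w)$ at $t=0$, which legitimizes the coefficient comparison and has already been established. One could equivalently phrase the argument by applying $\frac{1}{(l+1)!}\frac{d^{\,l+1}}{dt^{l+1}}\big|_{t=0}$ to the identity $t^{2}\varphi_t[u,v]_{\ltimes}=[t\varphi_t u,t\varphi_t v]_{\ltimes}$ and invoking the Leibniz rule for the bracket-valued product on the right, but the power-series viewpoint makes the combinatorics transparent.
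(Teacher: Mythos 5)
Your proof is correct and is exactly the argument the paper intends: the paper simply asserts the lemma follows from $\varphi_t$ preserving $[\cdot,\cdot]_{\ltimes}$ together with the power-series definition of $d_S^n$, and you have spelled out the coefficient comparison (equivalently, the Leibniz rule applied to $t^2\varphi_t[u,v]_{\ltimes}=[t\varphi_t u,t\varphi_t v]_{\ltimes}$) that makes this precise.
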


As an illustration of our constructions let us look again at connections. We have already seen that an Ehresmann connection on $E$ can be seen
as an element $\Gamma\in \widetilde{\Omega}_{E}^{1, 1}$. Hence we can talk about its restriction to $S$ as an element
\[ \Gamma|_{S}\in \Omega^1(S, E) .\]
This construction was introduced in \cite{CrFe-stab} in a more ad-hoc fashion. Also, $\Gamma$ is
linear as an element of $\widetilde{\Omega}_{E}$ if and only if it is a linear connection. For the direct implication: the properties of
$\varphi_t$ immediately imply that the $\ltimes$- bracket with $\Gamma$ preserves $\textrm{gr}_{0}(\tilde{\Omega}^{\bullet, 1}_{E})=
\Omega^{\bullet}(S, E)$ hence it induces a covariant derivative $d_{\Gamma}:= [\Gamma, \cdot ]_{\ltimes}: \Omega^{\bullet}(S, E)\rmap \Omega^{\bullet+ 1}(S, E)$.\\
%\[ d_{\Gamma}:= [\Gamma, \cdot ]_{\ltimes}: \Omega^{\bullet}(S, E)\rmap \Omega^{\bullet+ 1}(S, E).\]

From now on we will restrict our attention to Poisson structures which admit $S$
as a symplectic leaf. The following is immediate (see also Proposition 5.1 of \cite{CrFe-stab}).

\begin{lemma} \label{lma-last} Let $\pi$ be a horizontally non-degenerate Poisson structure on $E$ with corresponding Dirac element $\gamma\in \widetilde{\Omega}_{E}^{2}$
(cf. Proposition \ref{Dirac-Poisson}). Then $S$ is a symplectic leaf if and only if $\gamma|_{S}$ lives in bi-degree $(2, 0)$. In this case the
symplectic form is
\[ \omega_S:=-\gamma|_{S}\in \textrm{gr}_{0}(\widetilde{\Omega}_{E}^{2, 0})= \Omega^2(S).\]
\end{lemma}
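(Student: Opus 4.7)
The plan is to exploit the decomposition $\gamma=\theta^{\mathrm{v}}+\Gamma_\theta+\mathbb{F}_\theta$ from Proposition \ref{Dirac-Poisson} (living in bidegrees $(0,2)$, $(1,1)$, $(2,0)$ respectively) and to unpack what each component of $\gamma|_S$ means geometrically.

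First I would compute $\gamma|_S$ component by component using the local formula from Remark \ref{phi-local-coordinates}. For an elementary term $u=a(x,y)\,dx^I\otimes\partial_{x^J}\wedge\partial_{y^K}$ one has $t\varphi_t(u)=t^{|J|}a(x,ty)\,dx^I\otimes\partial_{x^J}\wedge\partial_{y^K}$, which tends to $a(x,0)\,dx^I\otimes\partial_{y^K}$ if $|J|=0$ and to zero if $|J|>0$. Thus: $\theta^{\mathrm{v}}|_S\in\Gamma(\Lambda^2 E)$ is the ordinary restriction of the vertical bivector to the zero section; in the $(1,1)$-piece the tautological summand $dx^i\otimes\partial_{x^i}$ of $\Gamma_\theta$ (the $|J|=1$ part) is killed, and what survives is the vertical part of the connection at $S$, i.e.\ an element of $\Omega^1(S,E)$ which vanishes iff $H_\theta|_S=TS$; finally $\mathbb{F}_\theta|_S\in\Omega^2(S)$ is the restriction of the coefficients of $\mathbb{F}_\theta$ to the zero section.

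Next I would characterize the leaf condition. Writing $\pi=\theta^{\mathrm{v}}+\theta^{\mathrm{h}}$ with $\theta^{\mathrm{h}}\in\Lambda^2 H_\theta$, and using the canonical splitting $T_sE=T_sS\oplus E_s$ at $s\in S$, note that $(\theta^{\mathrm{v}})^\sharp$ lands in $E_s$ while $(\theta^{\mathrm{h}})^\sharp$ is surjective onto $H_\theta|_s$ (since $\mathbb{F}_\theta$ is non-degenerate). Hence $\pi^\sharp(T^*_sE)=(\theta^{\mathrm{v}})^\sharp(T^*_sE)+H_\theta|_s$. The condition that $S$ is a symplectic leaf amounts to $\pi^\sharp(T^*_sE)=T_sS$ for every $s\in S$. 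Writing $H_\theta|_s$ as the graph of $-\Gamma_\theta|_s\colon T_sS\to E_s$, this equality forces all contributions to $E_s$ to vanish: both $\theta^{\mathrm{v}}|_S=0$ and $\Gamma_\theta|_S=0$. Conversely, under these vanishings $H_\theta|_S=TS$ and $\pi|_S=\theta^{\mathrm{h}}|_S$ has image exactly $TS$, so $S$ is a leaf. In terms of $\gamma$, this is precisely $\gamma|_S\in\mathrm{gr}_0(\widetilde{\Omega}_E^{2,0})$.

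Finally, assuming the leaf condition, $\pi|_S=\theta^{\mathrm{h}}|_S$ is a non-degenerate bivector on $TS$ and its inverse is the leaf symplectic form $\omega_S$, characterized (with the sign convention of the introduction, $\omega_S(X_f,X_g)=\{f,g\}$ and $X_f=\pi^\sharp(df)$) by $\omega_S(\pi^\sharp\alpha,\pi^\sharp\beta)=\pi(\alpha,\beta)$. Applying formula (\ref{2Form}) at $s\in S$, where $dp_s\colon H_\theta|_s\to T_sS$ becomes the identity, gives $\mathbb{F}_\theta(\theta^{\mathrm{h}\sharp}\eta,\theta^{\mathrm{h}\sharp}\mu)=-\theta^{\mathrm{h}}(\eta,\mu)=-\omega_S(\theta^{\mathrm{h}\sharp}\eta,\theta^{\mathrm{h}\sharp}\mu)$, so $\omega_S=-\mathbb{F}_\theta|_S=-\gamma|_S$. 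The main work is in the middle step (translating the bidegree decomposition into geometric conditions on the leaf); the other two are direct unpackings of Remark \ref{phi-local-coordinates} and formula (\ref{2Form}).
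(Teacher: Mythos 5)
Your proof is correct, and it supplies precisely the verification the paper omits: the lemma is stated as ``immediate'' with a pointer to Proposition 5.1 of \cite{CrFe-stab}, and the intended argument is exactly yours --- decompose $\gamma=\theta^{\mathrm{v}}+\Gamma_{\theta}+\mathbb{F}_{\theta}$, observe via Remark \ref{phi-local-coordinates} that the $(0,2)$ and $(1,1)$ parts of $\gamma|_{S}$ vanish iff $\theta^{\mathrm{v}}$ vanishes along $S$ and $H_{\theta}|_{S}=TS$, identify this with $\pi^{\sharp}(T^{*}_{s}E)=T_{s}S$, and read off $\omega_S=-\mathbb{F}_{\theta}|_{S}$ from (\ref{2Form}). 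The sign agrees with the paper's later uses (e.g.\ $\gamma_{t}|_{S}=-\omega_S$ in the proof of Proposition \ref{the-Moser-path}), so nothing needs to be changed.
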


For the first order approximation along $S$, we have:

\begin{proposition}\label{prop-first-jet} Let $\pi$ be a horizontally non-degenerate Poisson structure on $E$ which admits $S$ as
symplectic leaf, with corresponding Dirac element $\gamma\in \widetilde{\Omega}_{E}^{2}$. Then
\[ j^{1}_{S}\gamma\in J^{1}_{S}(\widetilde{\Omega}_{E}^{2})\subset \widetilde{\Omega}_{E}^{2}\]
is a Dirac element whose Poisson support $N$ is an open neighborhood of $S$ in $E$. In particular, on $N$,
it is associated with a Poisson structure, denoted
\[ j^{1}_{S}\pi\in \mathfrak{X}^2(N).\]
\end{proposition}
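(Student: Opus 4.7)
The plan is to verify, in order, the three requirements that make up the assertion: that $p_S(j^1_S\gamma)=\gamma_S$, that $[j^1_S\gamma,j^1_S\gamma]_{\ltimes}=0$, and that the $(2,0)$-component $\mathbb{F}_{j^1_S\gamma}$ is non-degenerate on an open neighborhood $N$ of $S$ in $E$. Once these are in hand, Proposition \ref{Dirac-Poisson} applied on $N$ produces the required Poisson structure $j^1_S\pi\in\mathfrak{X}^2(N)$. Throughout I will use the decomposition $j^1_S\gamma=\gamma|_S+d_S\gamma$ together with Lemma \ref{lma-last}, which identifies $\gamma|_S$ with (the pullback of) $-\omega_S\in\Omega^2(S)\subset\widetilde{\Omega}_{E}^{2,0}$; in particular, the $(0,2)$ and $(1,1)$ components of $\gamma|_S$ both vanish.

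For the projection condition, I will exploit the fact that $\varphi_t$ acts trivially on $\Omega(S,TS)$, so $p_S\circ\varphi_t=p_S$. Applying $p_S$ to the power series $t\varphi_t(\gamma)=\sum_{k\geq 0}t^k d_S^k\gamma$, whose left-hand side then equals $t\gamma_S$, and comparing coefficients, I read off $p_S(\gamma|_S)=0$, $p_S(d_S\gamma)=\gamma_S$ and $p_S(d_S^k\gamma)=0$ for every $k\geq 2$. The first two yield $p_S(j^1_S\gamma)=\gamma_S$; the vanishings at $k\geq 2$ will be needed for the bracket computation below.

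For the bracket I will expand
\[[j^1_S\gamma,j^1_S\gamma]_{\ltimes}=[\gamma|_S,\gamma|_S]_{\ltimes}+2[\gamma|_S,d_S\gamma]_{\ltimes}+[d_S\gamma,d_S\gamma]_{\ltimes}\]
and kill the three summands. The first is zero because $\Omega(S)$ is a central ideal in $\Omega_E$ (Lemma \ref{OmegaS-Central}). The second, again by Lemma \ref{OmegaS-Central} and the identity $\mathcal{L}_{\gamma_S}=d$ from (\ref{gamma-represents-d}), reduces to $\pm d\omega_S=0$. The third is the real difficulty, since it is not produced directly by $[\gamma,\gamma]_{\ltimes}=0$; instead I will apply Newton's formula (Lemma \ref{Newton-formula}) at order $l=1$,
\[0=d_S^1[\gamma,\gamma]_{\ltimes}=2[\gamma|_S,d_S^2\gamma]_{\ltimes}+[d_S\gamma,d_S\gamma]_{\ltimes},\]
and then use Lemma \ref{OmegaS-Central} once more to collapse $[\gamma|_S,d_S^2\gamma]_{\ltimes}$ into $\mathcal{L}_{p_S(d_S^2\gamma)}(\omega_S)$, which vanishes thanks to $p_S(d_S^2\gamma)=0$ from the previous paragraph. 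I expect this interaction between Newton's identity and the centrality lemma to be the main technical subtlety of the argument.

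It remains to control the Poisson support. The $(2,0)$-component of $j^1_S\gamma$ is $\mathbb{F}_\gamma|_S+d_S\mathbb{F}_\gamma$, whose value at any point of the zero section $S\subset E$ is simply $\mathbb{F}_\gamma|_S=-\omega_S$, which is non-degenerate. Since non-degeneracy of a section of $p^*\Lambda^2T^*S$ is an open condition, it persists on some open neighborhood $N\supset S$. On $N$, $j^1_S\gamma$ is thus a Dirac element whose Poisson support is all of $N$, and Proposition \ref{Dirac-Poisson} furnishes the horizontally non-degenerate Poisson structure $j^1_S\pi\in\mathfrak{X}^2(N)$.
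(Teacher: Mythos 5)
Your proof is correct and follows essentially the same route as the paper: the key identity $[j^{1}_{S}\gamma, j^{1}_{S}\gamma]_{\ltimes}=0$ is obtained from the Newton formula (Lemma \ref{Newton-formula}) applied to $[\gamma,\gamma]_{\ltimes}=0$ in orders $0$ and $1$, with the term $[\gamma|_{S}, d_{S}^{2}\gamma]_{\ltimes}$ killed by the centrality of $\Omega(S)$ (Lemma \ref{OmegaS-Central}). You additionally spell out the verifications of $p_S(j^1_S\gamma)=\gamma_S$ and of the openness of the Poisson support, which the paper leaves implicit; these details are correct.
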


\begin{proof} The non-trivial part of the proposition (and which uses the fact that $S$ is a symplectic leaf) is to show that
$[j^{1}_{S}\gamma, j^{1}_{S}\gamma]_{\ltimes}= 0$. This follows by applying the similar equation for $\gamma$, using the Newton formula of Lemma
\ref{Newton-formula} to compute its first order consequences and then using the fact that $\Omega(S)$ is in the center of $\Omega_E$ (Lemma
\ref{OmegaS-Central}) to delete the term $[\gamma|_{S}, d_{S}^{2}\gamma]_{\ltimes}$.
\end{proof}

\begin{definition} The Poisson bivector $j^{1}_{S}\pi$ from the previous proposition is called \textbf{the first order approximation of $\pi$ along $S$}.
\end{definition}

\begin{remark}\label{reconciliation} We explain how this definition is compatible with Proposition \ref{1st-jet-Atiyah}
(which says that the first order information of $\pi$ along $S$ is encoded in the Atiyah sequence of $A_S$) and with Definition
\ref{first-order-jet-def1} (the local model). Let us fix the symplectic structure $\omega_S$ of the leaf. Since the entire discussion depends
only on $j^{1}_{S}\pi$, we may assume that $\pi= j^{1}_{S}\pi$, i.e. the corresponding Dirac element is the sum of
\[ \pi^{\mathrm{v}}\in \textrm{gr}_1(\widetilde{\Omega}_{E}^{0, 2})= \Gamma (\Lambda^2E\otimes E^*),\]
\[ \Gamma= \Gamma_{\pi}\in  \textrm{gr}_1(\widetilde{\Omega}_{E}^{1, 1})= \Omega^1(S, \mathfrak{X}_{\textrm{lin}}(E)),\]
\[ -\omega_{S}+\sigma= \mathbb{F}_{\pi}\in \textrm{gr}_0(\widetilde{\Omega}_{E}^{2, 0})\oplus\textrm{gr}_1(\widetilde{\Omega}_{E}^{2, 0})=\Omega^2(S)\oplus\Omega^2(S, E^*).\]
On the other hand, the algebroid structure is defined on $A_S= TS\oplus E^*$,
with anchor the first projection. What is needed in order to describe such a Lie algebroid structure are precisely the elements
$\pi^{\mathrm{v}}$, $\Gamma$ and $\sigma$. Explicitly, identifying
\[ \Gamma(E^*)= \textrm{gr}_1(\Omega_{E}^{0, 0}),\]
one obtains the following formulas for the bracket $[\cdot,\cdot]_{A}$ on $A_{S}$:
\[ [\alpha, \beta]_{A}= [\beta, [\pi^{\mathrm{v}},
\alpha]_{\ltimes}]_{\ltimes}, \ [\alpha, X]_{A}= [\Gamma, \alpha]_{\ltimes}(X), \ [X, Y]_{A}= [X, Y]+ \sigma(X, Y),\] where
$\alpha,\beta\in\Gamma(E^*)$, $X,Y\in \mathfrak{X}(S)$. This describes the 1-1 correspondence between first jets of $\pi$'s, with $(S,\omega_S)$
as a symplectic leaf, and Lie algebroid structures on $A_{S}$ with anchor the first projection- as the explicit version of our Proposition
\ref{1st-jet-Atiyah} and a more compact description of Theorem 4.1 in \cite{Vorobjev}.

To explain the compatibility with Definition \ref{first-order-jet-def1}, we first use Remark \ref{non-smooth-model} which described the local
model as a Dirac structure on the dual $K^*$ of the kernel of $A_S$, using a splitting $\theta: A_S\rmap K$. In our case, $K= E^*$ (hence the
Dirac structure is on $E$), one can use as splitting  the canonical projection and, since  all the objects involved are given by explicit
formulas, it is straightforward to compute and compare the resulting Dirac structure (coming from Remark \ref{non-smooth-model}) and the Dirac
structure corresponding to $j^{1}_{S}\pi$: the two coincide!
\end{remark}

The following proposition is important for the Moser path method. We connect $\pi$ to its first order approximation $j^{1}_{S}\pi$ by a smooth
path of Poisson structures.

\begin{proposition}\label{the-Moser-path} Let $\pi$ be a horizontally non-degenerate Poisson structure on $E$ which admits $S$ as a symplectic leaf. Let $\gamma\in \widetilde{\Omega}_{E}^{2}$ be the corresponding Dirac element. Then, on a small enough neighborhood $N$ of $S$, there exists a smooth path of Poisson structures
\[ \pi_{t}\in \mathfrak{X}^2(N),\ \ \ \textrm{with}\ \ \pi_{1}= \pi|_{N}, \ \pi_{0}= j^{1}_{S}(\pi).\]
More precisely, $\pi_t$ can be chosen via the smooth path of Dirac elements
\[ \gamma_t= \gamma|_{S}+ \frac{t\varphi_t(\gamma)- \gamma|_{S}}{t} \ \ \ \forall \ t\in (0, 1],\]
and $N$ is the intersection of the Poisson supports of these elements.
\end{proposition}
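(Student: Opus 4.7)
My plan has four steps: verify that $\gamma_t$ is a smooth path in $\widetilde\Omega_E^2$, check the endpoint and projection conditions, prove the Maurer--Cartan equation $[\gamma_t,\gamma_t]_\ltimes = 0$, and show that the $(2,0)$-component of $\gamma_t$ is non-degenerate on a common neighborhood of $S$ for all $t \in [0,1]$. Once these are in place, Proposition~\ref{Dirac-Poisson} translates the family $\gamma_t$ into the required smooth path of Poisson bivectors $\pi_t$ on $N$.

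For smoothness and endpoints I would invoke the Taylor expansion of Lemma~\ref{TheFormula}:
\[
t\varphi_t(\gamma) = \gamma|_S + t\,d_S\gamma + t^2\,d_S^2\gamma + \ldots,
\]
which shows that $(t\varphi_t(\gamma) - \gamma|_S)/t = d_S\gamma + t\,d_S^2\gamma + \ldots$ extends smoothly across $t=0$; hence $\gamma_0 = \gamma|_S + d_S\gamma = j^1_S\gamma$, while $\gamma_1 = \gamma$ by direct substitution. The condition $p_S(\gamma_t) = \gamma_S$ reduces to $p_S(t\varphi_t(\gamma))/t = \gamma_S$, which holds because $p_S$ is only nonzero on the $(1,1)$-component $t\,m_t^*\Gamma_\gamma$ and pulling a projectable vector field back along $m_t$ preserves its base projection.

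The main step is the Maurer--Cartan equation. Setting $A := \gamma|_S = -\omega_S$ and $C_t := t\varphi_t(\gamma)$ so that $\gamma_t = A + (C_t - A)/t$, I would combine three ingredients: (i) $[C_t, C_t]_\ltimes = t^2\varphi_t([\gamma,\gamma]_\ltimes) = 0$, since $\varphi_t$ is a graded Lie algebra automorphism; (ii) $[A,A]_\ltimes = 0$, since $\Omega(S)$ is a central ideal in $\Omega_E$ by Lemma~\ref{OmegaS-Central}; (iii) $[C_t, A]_\ltimes = \mathcal{L}_{p_S(C_t)}(A) = \mathcal{L}_{t\gamma_S}(-\omega_S) = -t\,d\omega_S = 0$ by Lemma~\ref{OmegaS-Central} and equation~\eqref{gamma-represents-d}, using closedness of $\omega_S$. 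Expanding
\[
[\gamma_t,\gamma_t]_\ltimes = [A,A]_\ltimes + \tfrac{2}{t}\,[A,\,C_t - A]_\ltimes + \tfrac{1}{t^2}\,[C_t - A,\,C_t - A]_\ltimes
\]
and substituting yields $0$ for $t \in (0,1]$; smoothness in $t$ extends the identity to $t=0$.

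For the Poisson support, evaluating the $(2,0)$-component at $p \in S$ and using that $m_t(p) = p$ together with $\mathbb{F}_\gamma(p) = -\omega_S(p)$ from Lemma~\ref{lma-last} gives
\[
(\gamma_t)^{(2,0)}(p) = -\omega_S(p) + \frac{-\omega_S(p) + \omega_S(p)}{t} = -\omega_S(p),
\]
which is non-degenerate. Since $(e,t)\mapsto \mathbb{F}_{\gamma_t}(e)$ is smooth on $E\times [0,1]$ and $[0,1]$ is compact, a common neighborhood $N$ of $S$ exists on which every $\mathbb{F}_{\gamma_t}$, $t\in[0,1]$, is non-degenerate. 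I expect the most delicate step to be the bracket computation: the cross-term $[C_t,A]_\ltimes$ vanishes not simply because $A \in \Omega(S)$, but because Lemma~\ref{OmegaS-Central} reduces it to $\mathcal{L}_{p_S(C_t)}(A)$, which in turn vanishes because $\omega_S$ is closed -- exactly the hypothesis that $S$ is a symplectic leaf.
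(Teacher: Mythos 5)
Your proof is correct and follows essentially the same route as the paper: both arguments rest on the fact that $\varphi_t$ is a bracket automorphism, that $[\gamma,\gamma]_{\ltimes}=0$, that $\omega_S$ is central in $\Omega_E$, and that $[\,\cdot\,,\omega_S]_{\ltimes}$ reduces via Lemma \ref{OmegaS-Central} and (\ref{gamma-represents-d}) to $d\omega_S=0$, with the openness of $N$ obtained by the same tube-lemma compactness argument. The only cosmetic difference is that the paper packages the Maurer--Cartan computation through the identity $\gamma_t=\varphi_t(\gamma+(1-t)\omega_S)$ rather than your term-by-term bilinear expansion.
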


%Notice that the components of $\gamma_t$ are
%\[ \pi_{t}^{\mathrm{v}}= \varphi_t(\pi^{\mathrm{v}}),\ \ \Gamma_{t}= \varphi_t(\Gamma_{\pi}),\ \ \mathbb{F}_{t}= (t^{-1}-1)\omega_{S}+ \varphi_t(\mathbb{F}_{\pi}).\]

\begin{proof} Note first that $\gamma_t$ extends smoothly at $t= 0$ as $\gamma_0= j^{1}_{S}\gamma$. We will perform the computations for $t\neq 0$ and extended to $t= 0$ by continuity. We write:
\[ \gamma_t= \varphi_t(\gamma)+ (t^{-1}-1)\omega_{S}= \varphi_t(\gamma+ (1-t)\omega_{S}).\]
Using that $\varphi_t$ commutes with the bracket, $\gamma$ is a Dirac element and $[\omega_S,\omega_S]_{\ltimes}=0$:
\[ [\gamma_t, \gamma_t]_{\ltimes}= \varphi_t([\gamma+(1-t)\omega_S,\gamma+(1-t)\omega_S]_{\ltimes})=2(1-t)\varphi_t([\gamma,\omega_S]_{\ltimes}).\]
By Lemma \ref{OmegaS-Central}, formula (\ref{gamma-represents-d}), and the fact that $\omega_S$ is symplectic we have that
\[[\gamma,\omega_S]_{\ltimes}= \mathcal{L}_{p_S(\gamma)}(\omega_S)=\mathcal{L}_{\gamma_S}(\omega_S)=d\omega_S=0.\]
This proves that $[\gamma_t, \gamma_t]_{\ltimes}= 0$. It is also clear that $p_{S}(\gamma_t)= p_S(\varphi_t(\Gamma_{\pi}))= p_{S}(\Gamma_{\pi})= \gamma_{S}$.
%\[ p_{S}(\gamma_t)= p_S(\varphi_t(\Gamma_{\pi}))= p_{S}(\Gamma_{\pi})= \gamma_{S}.\]
Finally, note that $\gamma_{t|S}=-\omega_S$, hence  $(S,\omega_S)$ is a symplectic leaf for all $\gamma_t$'s. %$\pi_t$'s.
%Moreover, $\mathbb{F}_t$ - the $(2, 0)$ component of $\gamma_t$- is non-degenerate on $S$, hence we can find the open neighborhood $N$ on which it is non-degenerate for all $t\in[0,1]$.
This also show that $S\subset N$. To see that $N$ is indeed open, we apply the topological tube lemma:
from the smoothness of the family $\gamma_t$,  $\mathcal{N}$ consisting of pairs $(t, e)$ with $e$ in the support of $\gamma_t$ is open
in $[0, 1]\times E$; also, $e\in N$ means $[0, 1]\times \{e\}\subset \mathcal{N}$ hence, by the tube lemma, $[0, 1]\times W\subset \mathcal{N}$
(hence $W\subset N$) for some open $W\ni e$.
\end{proof}

%%%%%%%%%%%%%%%%%%%%%%%%%%%%%%%%%%%%%%%%%%%%%%%%%%
%%%%%%%%%%%%%%%%%%%%%%%%%%%%%%%%%%%%%%%%%%%%%%%%%%
%%%%%%%%%%%%%%%%%%%%%%%%%%%%%%%%%%%%%%%%%%%%%%%%%%
%%%%%%%%%%%%%%%%%%%%%%%%%%%%%%%%%%%%%%%%%%%%%%%%%%
%%%%%%%%%%%%%%%%%%%%%%%%%%%%%%%%%%%%%%%%%%%%%%%%%%
%%%%%%%%%%%%%%%%%%%%%%%%%%%%%%%%%%%%%%%%%%%%%%%%%%
\section{Proof of the main theorem; step 1: Moser path method}
\label{Proof of the main theorem; step 1: Moser path method}
%%%%%%%%%%%%%%%%%%%%%%%%%%%%%%%%%%%%%%%%%%%%%%%%%%
%%%%%%%%%%%%%%%%%%%%%%%%%%%%%%%%%%%%%%%%%%%%%%%%%%
%%%%%%%%%%%%%%%%%%%%%%%%%%%%%%%%%%%%%%%%%%%%%%%%%%
%%%%%%%%%%%%%%%%%%%%%%%%%%%%%%%%%%%%%%%%%%%%%%%%%%
%%%%%%%%%%%%%%%%%%%%%%%%%%%%%%%%%%%%%%%%%%%%%%%%%%
%%%%%%%%%%%%%%%%%%%%%%%%%%%%%%%%%%%%%%%%%%%%%%%%%%
%%%%%%%%%%%%%%%%%%%%%%%%%%%%%%%%%%%%%%%%%%%%%%%%%%
%%%%%%%%%%%%%%%%%%%%%%%%%%%%%%%%%%%%%%%%%%%%%%%%%%
%%%%%%%%%%%%%%%%%%%%%%%%%%%%%%%%%%%%%%%%%%%%%%%%%%

In this section we use the Moser path method to reduce the proof of the main theorem to some cohomological equations. The main outcome is
Theorem \ref{theorem-1} below.

Let $(M, \pi)$ is a Poisson manifold and let $(S,\omega_S)$ be a symplectic leaf. We start by describing the relevant cohomologies.
They are all relatives of the Poisson cohomology groups $H^{\bullet}_{\pi}(M)$- defined by the complex $(\mathfrak{X}^{\bullet}(M), d_{\pi})$
where $d_{\pi}= [\pi, \cdot]$. The first one is, intuitively, the Poisson cohomology of the germ of $(M, \pi)$ around $S$:
\[ H^{\bullet}_{\pi}(M)_{S}= \lim_{S\subset U} H^{\bullet}_{\pi|_{U}}(U),\]
where the limit is the direct limit over all the open neighborhoods $U$ of $S$ in $M$. The next relevant cohomology, the \textbf{Poisson
cohomology restricted to $S$}, denoted
\[H^{\bullet}_{\pi, S}(M),\]
is defined by the complex $(\mathfrak{X}^{\bullet}_{|S}(M), d_{\pi}|_{S})$, where $\mathfrak{X}_{|S}(M)= \Gamma(\Lambda^{\bullet} TM|_{S})$.
% consists of multivector fields on $M$ along $S$.

The last relevant cohomology is a version of $H^{\bullet}_{\pi, S}(M)$ with coefficients in the conormal bundle of $S$, which is best described using Lie algebroids.
Given a Lie algebroid $A$ and a representation $(V,\nabla)$ of $A$, the cohomology of $A$ with coefficients in $V$, denoted $H^{\bullet}(A, V)$,
is defined by the complex $\Omega^{\bullet}(A, V)= \Gamma(\Lambda^{\bullet}A^*\otimes V)$ endowed with the differential $d_{A, \nabla}$
given by the classical Koszul formula (see \cite{MackenzieLL}). When $V$ is the trivial 1-dimensional representation, one obtains the cohomology
DeRham cohomology of $A$. For instance, $H^{\bullet}_{\pi}(M)$ is just the cohomology of the cotangent algebroid $T^*M$, while $H^{\bullet}_{\pi, S}(M)$ is just the cohomology of the restriction
\[ A_{S}:= T^{*}M|_{S}.\]
The conormal bundle $\nu_{S}^{*}$ of $S$ is a representation of $A_{S}$ with $\nabla_{\alpha}(\beta)= [\alpha, \beta]$ (where $[\cdot, \cdot]$ is the bracket of $A_S$).
%\[ \nabla_{\alpha}(\beta)= [\alpha, \beta]_{A}\]
%where $[\cdot, \cdot]_{A}$ is the bracket of $A_S$.
The last relevant cohomology is
\[ H^{\bullet}_{\pi, S}(M, \nu_{S}^{*}):= H^{\bullet}(A_S, \nu_{S}^{*}).\]

\begin{theorem}\label{theorem-1}
 Let $S$ be an embedded symplectic leaf of a Poisson manifold $(M, \pi)$ and let $j^{1}_{S}\pi$ be the
first order approximation of $\pi$ along $S$ associated to some tubular neighborhood of $S$ in $M$. If
\[  H^{2}_{\pi}(M)_{S}= 0, \ H^{1}_{\pi, S}(M)= 0, \ H^{1}_{\pi, S}(M, \nu_{S}^{*})= 0,\]
then, around $S$, $\pi$ and $j^{1}_{S}\pi$ are Poisson diffeomorphic, by a Poisson diffeomorphism which is the identity on $S$.
\end{theorem}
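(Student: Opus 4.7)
The plan is a Moser-path argument built on Proposition \ref{the-Moser-path}: connect $\pi_1 = \pi$ to $\pi_0 = j^1_S\pi$ by the smooth family $\pi_t$ constructed there (all sharing $(S, \omega_S)$ as a symplectic leaf on a common neighborhood $N$), and seek a smooth time-dependent vector field $X_t$ on $N$ whose isotopy $\phi_t$ intertwines $\pi_0$ and $\pi_t$. Writing $\phi_t^* \pi_t = \pi_0$ and differentiating produces the cohomological equation
\begin{equation*}
d_{\pi_t} X_t \;=\; \dot\pi_t ,
\end{equation*}
and if one can further arrange $X_t|_S = 0$ for all $t$, then $\phi_t$ fixes $S$ pointwise and $\phi_1$ is the desired Poisson diffeomorphism (after possibly shrinking $N$).

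A key elementary observation is that $\dot\pi_t$ vanishes to second order along $S$. Indeed, the explicit form $\gamma_t = \gamma|_S + d_S\gamma + t\, d_S^2\gamma + t^2\, d_S^3\gamma + \cdots$ implies that every $\pi_t$ shares its first jet along $S$ with $\pi$, so $j^1_S \dot\pi_t = 0$. Consequently, for each $t$, $\dot\pi_t$ is $d_{\pi_t}$-closed and defines a germ class near $S$. The hypothesis $H^2_\pi(M)_S = 0$ supplies a primitive at $t = 1$; to obtain a \emph{smooth family} of primitives simultaneously for all $t \in [0,1]$ on a \emph{common} neighborhood of $S$, I would either propagate the vanishing along the path (using that $\pi_t$ and $\pi$ differ only in higher-than-first jets along $S$, so their germ Poisson cohomologies are canonically identified via a preliminary Moser-type argument) or work in a single total complex on $[0,1]\times N$ built from a chosen homotopy operator. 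Either way the output is a smooth family $X_t^{(0)}$ of vector fields near $S$ with $d_{\pi_t} X_t^{(0)} = \dot\pi_t$.

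Next I would adjust $X_t^{(0)}$ in two stages, using the remaining two hypotheses, to force sufficient vanishing along $S$. First, since $\dot\pi_t|_S = 0$, the restriction $X_t^{(0)}|_S \in \Gamma(TM|_S)$ is a $1$-cocycle in the complex computing $H^\bullet_{\pi, S}(M)$; the vanishing $H^1_{\pi, S}(M) = 0$ supplies a smooth family $f_t \in C^\infty(S)$ with $X_t^{(0)}|_S = (d_\pi f_t)|_S$, and for any smooth extension $\tilde f_t \in C^\infty(N)$ the corrected field $X_t^{(1)} := X_t^{(0)} - d_{\pi_t}\tilde f_t$ still solves the master equation while now satisfying $X_t^{(1)}|_S = 0$. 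Second, because $j^1_S \dot\pi_t = 0$, the first normal jet of $X_t^{(1)}$ along $S$ defines a $1$-cocycle in $\Omega^\bullet(A_S, \nu_S^*)$ under the identification $\Omega^1(A_S, \nu_S^*) = \Gamma(TM|_S \otimes \nu_S^*)$; the hypothesis $H^1_{\pi, S}(M, \nu_S^*) = 0$ then lets us subtract a further $d_{\pi_t}$-exact term, arising from a smooth family of functions vanishing to second order on $S$, to produce the final $X_t$ that vanishes to order two along $S$.

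The last step is integration: the order-two vanishing of $X_t$ along $S$, combined with the compactness of $[0,1]$, ensures by a topological tube-lemma argument (as in the end of the proof of Proposition \ref{the-Moser-path}) that the flow $\phi_t$ is defined on a common neighborhood of $S$ for all $t \in [0,1]$ and fixes $S$ pointwise; then $\phi_1$ is the claimed Poisson diffeomorphism. The main obstacle, and the technical heart of the proof, lies in the parametric nature of the three cohomological reductions: upgrading the pointwise-in-$t$ vanishing hypotheses into smooth families of primitives on a uniform neighborhood of $S$, and performing the two successive corrections without destroying properties already arranged. Once this is set up, the rest is routine Moser.
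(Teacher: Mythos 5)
Your overall strategy (the Moser path from Proposition \ref{the-Moser-path}, the master equation $d_{\pi_t}X_t=\dot\pi_t$, then two successive jet corrections along $S$ using $H^1_{\pi,S}(M)=0$ and $H^1_{\pi,S}(M,\nu_S^*)=0$) matches the paper's, but there is a genuine gap exactly at the point you yourself flag as ``the technical heart'': producing a \emph{smooth family} of primitives $X_t^{(0)}$ with $d_{\pi_t}X_t^{(0)}=\dot\pi_t$ on a common neighborhood of $S$. The hypothesis $H^2_{\pi}(M)_S=0$ only gives a primitive of $\dot\pi_1$ in the complex of $\pi=\pi_1$; for $t<1$ the cocycle $\dot\pi_t$ lives in the complex of a \emph{different} Poisson structure $\pi_t$, about whose germ cohomology the hypotheses say nothing. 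Neither of your two workarounds closes this: identifying the germ cohomologies of $\pi_t$ and $\pi$ ``via a preliminary Moser-type argument'' is circular (such an identification is essentially the Poisson diffeomorphism being constructed), and a total complex on $[0,1]\times N$ presupposes a homotopy operator, which vanishing of a direct-limit cohomology group does not supply --- let alone one varying smoothly in $t$ for the family of differentials $d_{\pi_t}$.

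The paper's resolution is a rescaling trick that is missing from your proposal. Since $\gamma_t=\varphi_t(\gamma+(1-t)\omega_S)$, one gets $\varphi_t(\gamma_s)=\gamma_{ts}+(1-t^{-1})\omega_S$ and hence, differentiating at $s=1$, $\dot\gamma_t=t^{-1}\varphi_t(\dot\gamma_1)$. Because $\varphi_t$ is an automorphism of $(\widetilde{\Omega}_E,[\cdot,\cdot]_{\ltimes})$ and $\omega_S$ is central, a \emph{single} solution $X$ of $[\gamma,X]_{\ltimes}=\dot\gamma_1$ with $j^1_SX=0$ generates the entire family $V_t:=t^{-1}\varphi_t(X)$ of solutions of $[\gamma_t,V_t]_{\ltimes}=\dot\gamma_t$, smooth down to $t=0$ precisely because the first jet of $X$ vanishes. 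So only one cohomological equation is ever solved (at $t=1$, using $H^2_{\pi}(M)_S=0$ after shrinking), and the two jet corrections are likewise performed once, on $X$, via Lemma \ref{cohom-restricted-algebroid}. A further point your sketch glosses over: the closedness of $d^1_SX$ in $(\mathrm{gr}_1(\Omega_E),[d^1_S\gamma,\cdot]_{\ltimes})$ is not automatic from $j^1_S\dot\gamma_1=0$ alone --- the Newton formula of Lemma \ref{Newton-formula} leaves the extra term $[d^2_S\gamma,X|_S]_{\ltimes}$, which disappears only \emph{after} the first correction, once $X|_S=dF_0$ lies in $\Omega(S)$ and is therefore central by Lemma \ref{OmegaS-Central}. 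Without the rescaling reduction, the argument as proposed does not go through.
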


The rest of this section is devoted to the proof of the theorem, followed by a slight improvement that will be
used in order to prove Proposition \ref{main-cor}.

First of all, by using a tubular neighborhood, we may assume that $M= E$ is a vector bundle $E$ over $S$ and $\pi$ is horizontally
non-degenerate. Let $\gamma\in \widetilde{\Omega}_{E}^{2}$ be the associated Dirac element. We first rewrite in terms of $E$ and $\gamma$ the
complexes computing the last two cohomologies in the statement of the theorem. %Denote by $A_S$ the transitive algebroid over $S$.

\begin{lemma}\label{cohom-restricted-algebroid} For any $l\geq 0$, the complex $(\Omega^{\bullet}(A_S, S^lE^*), d_A)$ computing the
cohomology of $A_S$ with coefficients in the $l$-th symmetric power of $\nu_{S}^{*}= E^*$ is canonically
isomorphic to the complex $(\textrm{gr}_{l}(\Omega_{E}^{\bullet}), [d^{1}_{S}\gamma, \cdot])$.
\end{lemma}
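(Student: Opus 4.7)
The plan is to establish the identification first as bi-graded vector spaces, then to match the two differentials. For the vector-space identification, combine formula (\ref{computation-grading}),
\[ \textrm{gr}_{l}(\Omega_{E}^{p,q}) = \Omega^{p}(S, \Lambda^{q}E\otimes S^{l}E^{*}), \]
with the splitting $A_{S}=TS\oplus E^{*}$ coming from Remark \ref{reconciliation} (the anchor on a symplectic leaf has image $TS$, so the short exact Atiyah sequence (\ref{abstract-Atiyah-leaf}) is split once a tubular neighborhood is chosen). Dualising and taking exterior powers,
\[ \Omega^{k}(A_{S}, S^{l}E^{*}) = \bigoplus_{p+q=k} \Omega^{p}(S, \Lambda^{q}E\otimes S^{l}E^{*}) = \bigoplus_{p+q=k} \textrm{gr}_{l}(\Omega_{E}^{p,q}), \]
so the algebroid form-degree $k$ corresponds to the total bi-degree $p+q$.

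Next I would verify that $\textrm{ad}_{d^{1}_{S}\gamma}=[d^{1}_{S}\gamma,\cdot]_{\ltimes}$ preserves each $\textrm{gr}_{l}$ and squares to zero. Preservation is a direct consequence of the multiplicativity of $\varphi_{t}$ on $[\cdot,\cdot]_{\ltimes}$: if $u\in \textrm{gr}_{a}$ and $v\in \textrm{gr}_{b}$ then $[u,v]_{\ltimes}\in \textrm{gr}_{a+b-1}$, so bracketing with $d^{1}_{S}\gamma\in \textrm{gr}_{1}$ preserves $\textrm{gr}_{l}$. Squaring to zero follows from $[\gamma,\gamma]_{\ltimes}=0$: expanding with Lemma \ref{Newton-formula} and using centrality of $\gamma|_{S}=-\omega_{S}$ in $\Omega_{E}$ (Lemma \ref{OmegaS-Central}) to kill the cross term $[\gamma|_{S},d^{2}_{S}\gamma]_{\ltimes}$, one reads off $[d^{1}_{S}\gamma, d^{1}_{S}\gamma]_{\ltimes}=0$; this is the same mechanism used in Proposition \ref{prop-first-jet}.

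The substantive step is to check that, under the vector-space isomorphism, $\textrm{ad}_{d^{1}_{S}\gamma}$ coincides with the Lie algebroid differential $d_{A_{S},\nabla}$, where $\nabla$ is the adjoint representation of $A_{S}$ on its isotropy bundle $E^{*}$ extended by derivations to $S^{l}E^{*}$. Both are degree-one derivations with respect to the natural wedge and module structures on the bi-graded algebra $\bigoplus_{l}\textrm{gr}_{l}(\Omega_{E})$, so it suffices to test them on a family of generators: functions $f\in C^{\infty}(S)$, one-forms $\alpha\in \Omega^{1}(S)$, constant vertical fields $X\in \Gamma(E)$, and polynomial coefficients $\beta\in \Gamma(S^{l}E^{*})$. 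I would decompose $d^{1}_{S}\gamma=\pi^{\textrm{v}}+\Gamma_{\pi}+\sigma$ as in Remark \ref{reconciliation} and compute each bracket using the formulas from subsection \ref{The graded Lie algebra}, then compare with $d_{A_{S},\nabla}$ unpacked via the explicit $A_{S}$-bracket formulas in Remark \ref{reconciliation}: on $f$ one gets $df$ from $[\Gamma_{\pi},f]_{\ltimes}$; on $\alpha$ the $\sigma$-contribution supplies the correction term in $[X,Y]_{A}=[X,Y]+\sigma(X,Y)$; on $X$ the $\Gamma_{\pi}$- and $\pi^{\textrm{v}}$-pieces produce the dual of the mixed bracket $[\alpha,X]_{A}=[\Gamma_{\pi},\alpha]_{\ltimes}(X)$ and of the fibre bracket $[\alpha,\beta]_{A}=[\beta,[\pi^{\textrm{v}},\alpha]_{\ltimes}]_{\ltimes}$; and on $\beta$ the term $[\pi^{\textrm{v}},\beta]_{\ltimes}$ yields the fibrewise coadjoint action on polynomials while $[\Gamma_{\pi},\beta]_{\ltimes}$ yields the horizontal derivative, which together match $\nabla\beta$ evaluated on sections of $A_{S}=TS\oplus E^{*}$. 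For $l=0$ (trivial coefficients) this reduces to Proposition 4.3 of \cite{CrFe-stab}; the only new work is the action on the coefficient factor $\Gamma(S^{l}E^{*})$, which commutes in an obvious way with everything else since $\nabla$ is built from $\pi^{\textrm{v}}$ and $\Gamma_{\pi}$ by derivations. The main obstacle is purely bookkeeping—tracking signs and bi-degrees through the three-term decomposition of $d^{1}_{S}\gamma$—but no conceptual difficulty intervenes once the Dirac-element/Lie-algebroid dictionary of Remark \ref{reconciliation} is in hand.
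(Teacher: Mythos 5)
Your proposal is correct and follows essentially the same route as the paper: identify $\textrm{gr}_{l}(\Omega_{E}^{\bullet})$ with $\Omega^{\bullet}(A_S,S^lE^*)$ via (\ref{computation-grading}) and the splitting $A_S=TS\oplus E^*$, write $d^1_S\gamma=\pi^{\mathrm{v}}+\Gamma+\sigma$, and match the two differentials on generators using the derivation property (the paper tests on $\Omega(S)$, $\Gamma(E)$ and $\Gamma(E^*)$, reducing $S^lE^*$ to $E^*$ by symmetric powers). One small bookkeeping slip in your narrative: the $\sigma$-term actually enters through $d_0(V)(X,Y)=-V(\sigma(X,Y))$ for $V\in\Gamma(E)$, not through the action on one-forms $\alpha\in\Omega^1(S)$ (on $\Omega(S)$ the bracket with $d^1_S\gamma$ is just $\mathcal{L}_{\gamma_S}=d$ by Lemma \ref{OmegaS-Central}), but the computations would force this correction when carried out.
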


\begin{proof}
We use the notations and the explicit formulas from Remark \ref{reconciliation}. With the identification $A_S= TS\oplus E^*$ and the
identification (\ref{computation-grading}), we see that
\[ \textrm{gr}_{l}(\Omega_{E}^{\bullet})= \Omega^{\bullet}(A_S, S^lE^*).\]
Hence we still have to show that the two differentials coincide. Let $d_l$ be the differential for the cohomology of $A_S$ with coefficients in
$S^{l}E^*$. Denote
\[\delta:=d^1_S\gamma=\pi^{\mathrm{v}}+\Gamma+\sigma.\]
Since both $d_l$ and $ad_{\delta}$ act as derivations and $d_l$ is the $l$-th symmetric power of
$d_1$, it is enough to prove that for $\omega\in\Omega(S)$, $V\in\Gamma(E)$ and $\psi\in\Gamma(E^*)$, we have that
\begin{equation}\label{Equation_blabla}
d_0(\omega)=[\delta,\omega]_{\ltimes}, \ \ d_0(V)=[\delta,V]_{\ltimes},\ \ d_1(\psi)=[\delta,\psi]_{\ltimes}.
\end{equation}

For the first equation, observe that by Lemma \ref{OmegaS-Central} and (\ref{gamma-represents-d}), we have that
$[\delta,\omega]_{\ltimes}=\mathcal{L}_{\gamma_S}\omega=d\omega$ and since the anchor is a Lie algebroid map, we also have that
$d_0\omega=d\omega$.

In the computations below, we use that, for $W\in\Gamma(\Lambda^{\bullet} E)$ and $\eta\in\Gamma(E^*)$, we have that $W(\eta,\cdot)=-[\eta,W]$,
where on the left hand side we just contract $W$ with $\eta$ and on the right hand side we regard $W$ and $\eta$ as elements in
$\mathfrak{X}^{\bullet}(E)$. Let $V\in\Gamma(E)$. For $\eta,\psi\in\Gamma(E^*)$, using that $C^{\infty}(S)$ commutes with $\Omega_E$ and that
the only term in $[\delta,V]_{\ltimes}$ which is nonzero on $\Gamma(E^*)\times \Gamma(E^*)$ is $[\pi^{\mathrm{v}},V]$, we have:
\begin{eqnarray*}
d_0(V)(\eta,\psi)&=&-V([\eta,\psi]_A)=[[\psi,[\pi^{\mathrm{v}},\eta]],V]=\\
&=&[\psi,[[\pi^{\mathrm{v}},V],\eta]]+[\psi,[\pi^{\mathrm{v}},[\eta,V]]]+[[\psi,V],[\pi^{\mathrm{v}},\eta]]=\\
&=&[\pi^{\mathrm{v}},V](\eta,\psi)-[\psi,[\pi^{\mathrm{v}},V(\eta)]]-[V(\psi),[\pi^{\mathrm{v}},\eta]]=\\
&=&[\pi^{\mathrm{v}},V](\eta,\psi)=[\delta,V]_{\ltimes}(\eta,\psi).
\end{eqnarray*}
For $X\in\mathfrak{X}(S)$ and $\eta\in\Gamma(E^*)$, we have that
\begin{eqnarray*}
d_0(V)(X,\eta)&=&[X,V(\eta)]-V([X,\eta]_A)=[X,[V,\eta]]-[V,[\mathrm{hor}_{\Gamma}(X),\eta]]=\\
&=&[\mathrm{hor}_{\Gamma}(X),[V,\eta]]-[V,[\mathrm{hor}_{\Gamma}(X),\eta]]=[\mathrm{hor}_{\Gamma}(X),V](\eta)=\\
&=&[\Gamma,V]_{\ltimes}(X,\eta)=[\delta,V]_{\ltimes}(X,\eta),
\end{eqnarray*}
where we have used the fact that the only term in $[\delta,V]_{\ltimes}$ which is nonzero on
$\mathfrak{X}(S)\times\Gamma(E^*)$ is $[\Gamma,V]_{\ltimes}$. Consider now $X,Y\in\mathfrak{X}(S)$. We have that
\begin{eqnarray*}
d_0(V)(X,Y)&=&-V([X,Y]_A)=-V(\sigma(X,Y))=-[\sigma(X,Y),V]=\\
&=&[V,\sigma](X,Y)=[\delta,V]_{\ltimes}(X,Y),
\end{eqnarray*}
where we have used the fact that the only term in $[\delta,V]_{\ltimes}$ which is nonzero on $\mathfrak{X}(S)\times \mathfrak{X}(S)$ is $[\sigma,V]_{\ltimes}$.
Hence we have proven the second equation in (\ref{Equation_blabla}).

Let $\psi\in\Gamma(E^*)$. For $\eta\in\Gamma(E^*)$ and respectively $X\in\mathfrak{X}(S)$, we have that
\begin{align*}
d_1(\psi)(\eta)&=-[\psi,\eta]_A=-[\eta,[\pi^{\mathrm{v}},\psi]]=[\pi^{\mathrm{v}},\psi](\eta)=[\delta,\psi]_{\ltimes}(\eta),\\
d_1(\psi)(X)&=[X,\psi]_A=[\mathrm{hor}_{\Gamma}(X),\psi]=[\Gamma,\psi]_{\ltimes}(X)=[\delta,\psi]_{\ltimes}(X),
\end{align*}
and this proves also the last equation in (\ref{Equation_blabla}).
\end{proof}

We now return to the proof of Theorem \ref{theorem-1}. We will use the path of Poisson structures $\pi_t$ provided by Proposition
\ref{the-Moser-path} and the associated Dirac elements $\gamma_{t}$, with $\gamma_{1}= \gamma$ corresponding to $\pi$ and $\gamma_{0}=
j^{1}_{S}\gamma$. The first part of the proof holds for general paths $\gamma_t$. %- i.e. the actual formula for $\gamma_t$ is not important.
We are looking for a family $\mu_{t}$ of diffeomorphisms defined on a neighborhood of $S$ in $E$, for all $t\in [0, 1]$, such that $\mu_{t}=
\textrm{Id}_S$ on $S$, $\mu_0= \textrm{Id}$ and
\begin{equation}
\label{desired-homotopy}
\mu_{t}^{*}\pi_t= j^1_S\pi
\end{equation}
for all $t\in [0,1]$. Then $\mu_1$ will be the desired isomorphism. We will define $\mu_t$ as the flow of a time depending vector field $Z_t$,
i.e.\ as the solution of:
\[ \frac{d}{dt} \mu_t(x)= Z_t(\mu_t(x)), \ \ \mu_0(x)= x.\]
Hence we are looking for the time dependent $Z_t$ defined on a neighborhood of $S$ in $E$. The first condition we require is that $Z= 0$ along
$S$. This implies that $\mu_t= \textrm{Id}$ on $S$ and that $\mu_t$'s are well-defined up to time 1 on an open neighborhood $U$ of $S$, i.e.
$[0, 1]\times U$ is inside the domain $\mathcal{D}$ of the flow of $Z_t$. For the last assertion one uses again the tube lemma: for each $x\in
S$, since $[0, 1]\times \{x\}\subset \mathcal{D}$, one can find an open $U_x\ni x$ with $[0, 1]\times U_x\subset \mathcal{D}$ and one takes $U=
\cup_x U_x$.
%\end{remark}
% Proposition \ref{the-Moser-path}

Finally, since (\ref{desired-homotopy}) holds at $t=0$, it suffices to require its infinitesimal version: %of the equation % - obtained by differentiation with respect to $t$-
%holds. Hence $Z_t$ should satisfy: %the homotopy equation:
\begin{equation}\label{homotopy-equation0}
\mathcal{L}_{Z_t}(\pi_t)+\dot{\pi}_t=0.
\end{equation}
The next step is to rewrite this equation in terms of the Dirac elements $\gamma_{t}$. For each $t$ we consider the isomorphism induced by
$\gamma_t$ (see Proposition \ref{PoissonCohomology}):
\[ \tau_{t}: (\mathfrak{X}^{\bullet}(E),d_{\pi_t})\rmap (\Omega_E^{\bullet},ad_{\gamma_t}).\]
Strictly speaking, the map $\tau_t$ is defined only on the open $N$ from Proposition \ref{the-Moser-path}, but since the discussion is local
around $S$, we allow ourselves this notational sloppiness.  Finding the $Z_t$'s is equivalent to finding
\[ V_t:= \tau_t(Z_t)\in \Omega_{E}^{1}.\]
The following is our compact version of Proposition 2.14 of \cite{Vorobjev2}.

\begin{lemma}\label{HomotopyEquation}
The homotopy equation (\ref{homotopy-equation0}) is equivalent to the following equation:
\begin{equation}
\label{homotopy-equation} [\gamma_t,V_t]_{\ltimes}=\dot{\gamma}_t \
\ (\forall) \ t\in [0, 1]
\end{equation}
required to hold on a neighborhood of $S$ in $E$.
\end{lemma}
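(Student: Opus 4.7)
The plan is to convert the Moser equation on bivectors into one on Dirac elements using the complex isomorphism $\tau_t$ from Proposition \ref{PoissonCohomology}. By graded-antisymmetry of the Schouten bracket, $\mathcal{L}_{Z_t}\pi_t = [Z_t,\pi_t] = -[\pi_t,Z_t] = -d_{\pi_t}(Z_t)$, so the homotopy equation (\ref{homotopy-equation0}) is equivalent to $d_{\pi_t}(Z_t) = \dot\pi_t$. Applying $\tau_t$, which intertwines $d_{\pi_t}$ with $ad_{\gamma_t}$, and writing $V_t := \tau_t(Z_t)$, this becomes
\[ [\gamma_t,V_t]_{\ltimes} = \tau_t(\dot\pi_t), \]
so the lemma reduces to the identification $\tau_t(\dot\pi_t) = \dot\gamma_t$ in $\Omega^2_E$.

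Before comparing them, I check that both sides live in the same graded piece. The right-hand side $\tau_t(\dot\pi_t)$ is in $\Omega^2_E$ by definition of $\tau_t$. For $\dot\gamma_t$: since the path from Proposition \ref{the-Moser-path} satisfies $p_S(\gamma_t)=\gamma_S$ for every $t$, differentiating gives $p_S(\dot\gamma_t)=0$, hence $\dot\gamma_t\in\Omega^2_E\subset\widetilde\Omega^2_E$. So both sides fit in the $(2,0)\oplus(1,1)\oplus(0,2)$ decomposition of $\Omega^2_E$, where one can compare them componentwise.

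The \emph{main step} is to verify $\tau_t(\dot\pi_t)=\dot\gamma_t$ using the explicit formula (\ref{tau-explicit}), namely $\tau_t=\wedge^{\bullet}f_{t,*}$ with $f_t=(-\mathbb{F}_t^{\sharp},id_V)$. I decompose $\dot\pi_t$ along $TE=H_{\pi_t}\oplus V$; the summands in $\wedge^2V$, $H_{\pi_t}\wedge V$ and $\wedge^2H_{\pi_t}$ map under $\wedge^2 f_{t,*}$ to $\Omega_E^{0,2}$, $\Omega_E^{1,1}$ and $\Omega_E^{2,0}$ respectively. On the Dirac side, Vorobjev's correspondence of Proposition \ref{Dirac-Poisson} gives smooth formulas for $\pi^{\mathrm{v}}$, $\Gamma_{\gamma}$ and $\mathbb{F}_{\gamma}$ in terms of $\pi$; differentiating these at $t$ matches the three bi-degree components of $\tau_t(\dot\pi_t)$, with the duality $\mathbb{F}_t\leftrightarrow\pi_t^{\mathrm{h}}$ (given by (\ref{2Form})) relating the $(2,0)$ parts and the off-diagonal block of $f_{t,*}$ producing $\dot\Gamma_t$ from the mixed part of $\dot\pi_t$.

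The main obstacle is the bookkeeping in this third step: a bivector $\pi_t$ carries no independent connection data, yet $\dot\gamma_t$ contains a piece $\dot\Gamma_t$, and one must check carefully that it is correctly produced from the mixed $H_{\pi_t}\wedge V$ component of $\dot\pi_t$ under $\tau_t$. This is essentially a compact reformulation of Proposition 2.14 of \cite{Vorobjev2}, and it could alternatively be verified by a direct local-coordinate computation once a connection on $E$ and a trivialization have been fixed.
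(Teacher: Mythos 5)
Your proof is correct, and it takes a somewhat different --- and arguably cleaner --- route than the paper's. The paper decomposes $V_t=Y_t-\mathbb{F}_{\pi_t}^{\sharp}(X_t)$ into its bi-degree $(0,1)$ and $(1,0)$ components, expands $[\gamma_t,V_t]_{\ltimes}=\dot{\gamma}_t$ into three equations (one per bi-degree), and identifies the resulting system with the one in Proposition 2.14 of \cite{Vorobjev2}, which is quoted as being equivalent to the Moser equation (\ref{homotopy-equation0}). You instead use the chain-isomorphism property of $\tau_t$ from Proposition \ref{PoissonCohomology} to transport the left-hand side, thereby reducing the entire lemma to the single identity $\tau_t(\dot{\pi}_t)=\dot{\gamma}_t$, i.e.\ to the statement that $\tau_{\pi}$ is the derivative at $\pi$ of Vorobjev's correspondence $\pi\mapsto\gamma$. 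That identity is true, and the componentwise verification you sketch does go through; the two delicate points are (i) that the variation of the horizontal distribution $H_{\pi_t}=\pi_t^{\sharp}(V^{\circ})$ is governed exactly by the mixed $H_{\pi_t}\wedge V$ component of $\dot{\pi}_t$, which under the off-diagonal block of $f_{t,*}$ produces precisely $\dot{\Gamma}_{\pi_t}$, and (ii) that the $\Lambda^2V$-projection of $\pi_t$ acquires no correction term from the moving splitting (because $\pi_t$ itself has no mixed component), so $\dot{\pi}_t^{\mathrm{v}}$ is indeed the vertical-vertical part of $\dot{\pi}_t$. Both arguments ultimately rest on the same linearized Vorobjev computation; yours isolates exactly which identity carries the content and lets Proposition \ref{PoissonCohomology} do the rest, whereas the paper matches the expanded system directly against the literature. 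Your level of detail on the remaining identity (a sketch plus a pointer to \cite{Vorobjev2} or a local computation) is comparable to that of the paper's own proof, so I see no genuine gap.
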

\begin{proof}
If we decompose $Z_t=X_t+Y_t$, into its $\Gamma_{\pi_t}$-horizontal and vertical components, then, using the explicit form of $\tau_{t}$ from
(\ref{tau-explicit}), it follows that $V_t$ is the sum of two elements of bi-degree $(1,0)$ and $(0,1)$ respectively:
\[V_t=Y_t-\mathbb{F}_{\pi_t}^{\sharp}(X_t).\]
Therefore (\ref{homotopy-equation}) brakes down into the following list of equations, of various degrees:
\[[\pi_t^{\mathrm{v}},Y_t]_{\ltimes}=\dot{\pi}_t^{\mathrm{v}},\ \ [\Gamma_{\pi_t},Y_t]_{\ltimes}-[\pi_t^{\mathrm{v}},\mathbb{F}_{\pi_t}^{\sharp}(X_t)]_{\ltimes}=\dot{\Gamma}_{\pi_t},\ \ [\mathbb{F}_{\pi_t},Y_t]_{\ltimes}-[\Gamma_{\gamma_t},\mathbb{F}_{\pi_t}^{\sharp}(X_t)]_{\ltimes}=\dot{\mathbb{F}}_{\pi_t}.\]
These are precisely the equations appearing in Proposition 2.14 of \cite{Vorobjev2}.
\end{proof}

In conclusion, we are looking for elements $V_{t}\in
\Omega_{E}^{1}$, defined for all $t\in [0, 1]$
on some open
neighborhood of $S$ in $E$, with the property that $V_{t}|_{S}= 0$
and satisfying the equations (\ref{homotopy-equation}). There is one equation
for each $t$ but, since $\gamma_t$ is of a special type, one
can reduce everything to a single equation.

\begin{lemma} Assume that there exists $X\in \Omega^{1}_{E}$ such that $j_{S}^{1}X= 0$ and
\begin{equation}\label{EquationV}
[\gamma, X]_{\ltimes}= \dot{\gamma}_{1}.
\end{equation}
Then $V_t:= t^{-1}\varphi_t(X)$ satisfies the homotopy equations (\ref{homotopy-equation}).
\end{lemma}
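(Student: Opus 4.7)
The plan is to verify $[\gamma_t,V_t]_\ltimes=\dot\gamma_t$ by computing each side and matching, exploiting two structural facts from earlier: $\varphi_t$ is a bi-degree preserving automorphism of the graded Lie algebra $(\widetilde\Omega_E,[\cdot,\cdot]_\ltimes)$ that preserves $\Omega_E$, and $\Omega(S)$ is a central ideal in $\Omega_E$ (Lemma \ref{OmegaS-Central}). The starting point is the explicit form $\gamma_t=\varphi_t(\gamma)+(t^{-1}-1)\omega_S$ that was already derived inside the proof of Proposition \ref{the-Moser-path}.

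For the left-hand side I would argue as follows. Since $X\in\Omega^1_E$ and $\varphi_t$ preserves $\Omega_E$, we have $\varphi_t(X)\in\Omega_E$, so centrality of $\Omega(S)$ forces $[\omega_S,\varphi_t(X)]_\ltimes=0$. Combined with the automorphism property of $\varphi_t$ and $[\gamma,X]_\ltimes=\dot\gamma_1$, this immediately gives
\[
[\gamma_t,V_t]_\ltimes=t^{-1}[\varphi_t(\gamma),\varphi_t(X)]_\ltimes
=t^{-1}\varphi_t([\gamma,X]_\ltimes)=t^{-1}\varphi_t(\dot\gamma_1).
\]

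For the right-hand side, the key is the one-parameter-group law $\varphi_r\circ\varphi_s=\varphi_{rs}$. Writing $t+\epsilon=t(1+\epsilon/t)$ and setting $\mathcal{D}(u):=\tfrac{d}{ds}\big|_{s=1}\varphi_s(u)$, one obtains $\tfrac{d}{dt}\varphi_t(u)=t^{-1}\varphi_t(\mathcal{D}(u))$. Since $\omega_S$ lies in bi-degree $(2,0)$ and is pulled back from $S$, one has $\varphi_t(\omega_S)=t^{-1}\omega_S$, and in particular $\mathcal{D}(\omega_S)=-\omega_S$. Differentiating $\gamma_t=\varphi_t(\gamma)+(t^{-1}-1)\omega_S$ thus gives
\[
\dot\gamma_t=t^{-1}\varphi_t(\mathcal{D}(\gamma))-t^{-2}\omega_S,
\qquad
\dot\gamma_1=\mathcal{D}(\gamma)-\omega_S,
\]
and therefore
\[
t^{-1}\varphi_t(\dot\gamma_1)=t^{-1}\varphi_t(\mathcal{D}(\gamma))-t^{-1}\varphi_t(\omega_S)=t^{-1}\varphi_t(\mathcal{D}(\gamma))-t^{-2}\omega_S=\dot\gamma_t,
\]
matching the bracket computed above.

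Finally I would address smoothness at $t=0$, since $V_t=t^{-1}\varphi_t(X)$ is a priori only defined for $t\neq 0$. The power-series expansion $\varphi_t(X)=t^{-1}X|_S+d_SX+t\,d^2_SX+\cdots$, combined with the hypothesis $j^1_S X=0$ (which forces the two leading homogeneous components $X|_S$ and $d_SX$ to vanish separately, as they live in distinct graded pieces), shows that $\varphi_t(X)=t\,d^2_SX+t^2d^3_SX+\cdots$ is divisible by $t$. Hence $V_t$ extends smoothly to $t\in[0,1]$ with $V_t|_S=0$, as required for the Moser flow argument. I anticipate no serious obstacle: the only subtlety is the careful bookkeeping of the $\omega_S$-terms, and that is entirely controlled by the centrality lemma and the homogeneity $\varphi_t(\omega_S)=t^{-1}\omega_S$.
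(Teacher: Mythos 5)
Your proof is correct and follows essentially the same route as the paper: centrality of $\omega_S$ plus the automorphism property of $\varphi_t$ reduce the left-hand side to $t^{-1}\varphi_t(\dot{\gamma}_1)$, the auxiliary identity $t^{-1}\varphi_t(\dot{\gamma}_1)=\dot{\gamma}_t$ is established via the one-parameter-group law, and $j^1_S X=0$ handles smoothness and vanishing along $S$ at $t=0$. The only cosmetic difference is that the paper obtains the auxiliary identity by differentiating the scaling relation $\varphi_t(\gamma_s)=\gamma_{ts}+(1-t^{-1})\omega_S$ at $s=1$, whereas you differentiate $\gamma_t$ directly using the infinitesimal generator of $\varphi$; these are the same computation.
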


\begin{proof} The condition that the first jet of $X$ along $S$ vanishes ensures that $V_t$
is a smooth family defined also at $t=0$ and that $V_t$ vanishes along $S$. We check the homotopy equations at all $t\in (0, 1]$. For the left
hand side:
\begin{eqnarray*}
\nonumber[\gamma_t,V_t]_{\ltimes}=[\varphi_t(\gamma)+(1-t^{-1})\omega_S,V_t]_{\ltimes}=\varphi_t([\gamma,\varphi_{t^{-1}}(V_t)]_{\ltimes})=t^{-1}\varphi_t([\gamma, X]_{\ltimes}),
\end{eqnarray*}
where we have used the fact that $\omega_S$ lies in the center of
$\Omega_E$ and that $\varphi_t$ commutes with the brackets. Using
the assumption on $X$, we find
\[ [\gamma_t,V_t]_{\ltimes}=t^{-1}\varphi_t(\dot{\gamma}_{1}).\]
Hence (\ref{homotopy-equation}) will follow for $t\in(0,1]$ if we prove the following equation
\begin{equation}\label{Auxiliary}
t^{-1}\varphi_t(\dot{\gamma}_{1})= \dot{\gamma}_t.
\end{equation}
Note first that the explicit formula for $\gamma_{t}$ implies that
\[\varphi_t(\gamma_s)=\gamma_{ts}+(1-t^{-1})\omega_S.\]
Taking the derivative of this equation at $s=1$, we obtain the result.
%\begin{align*}
%t^{-1}\varphi_t(\dot{\gamma}_{1}) &=t^{-1}\varphi_t(\frac{d}{d\xi}(\varphi_{\xi}(\gamma)+(1-\xi^{-1})\omega_S)_{|_{\xi=1}})=t^{-1}\varphi_t(\frac{d}{d\xi}(\varphi_{\xi}(\gamma))_{|_{\xi=1}}+\omega_S)=\\
%&=t^{-1}\varphi_t(\frac{d}{d\xi}(\varphi_{\xi}(\gamma))_{|_{\xi=1}})+t^{-2}\omega_S=(\frac{d}{d\xi}(\varphi_{\xi}(\gamma))_{|_{\xi=t}})+t^{-2}\omega_S=\frac{d}{dt}(\gamma_{t}).
%\end{align*}
\end{proof}
The equation for $X$ in the last lemma lives in the cohomology of $(\Omega_{E}^{\bullet}, \textrm{ad}_{\gamma})$. Note
that the right hand side of the equation is indeed closed. This follows by taking the derivative with respect to $t$ at $t= 1$ in
$[\gamma_t, \gamma_t]_{\ltimes}= 0$. Using Proposition
\ref{PoissonCohomology} and the first assumption of Theorem \ref{theorem-1} we see that, after eventually shrinking its domain of
definition, $\dot{\gamma}_1$ exact. This ensures the existence of $X$. To conclude the
proof of the theorem, we still have to show that $X$ can be corrected so that $j_{S}^{1}X= 0$. We will do so by finding $F\in
\Omega_{E}^{0}=C^{\infty}(E)$, such that
\[ j^{1}_{S}([\gamma, F]_{\ltimes})= j^{1}_{S}(X).\]
Then $X'= X- [\gamma,F]_{\ltimes}$ will be the correction of $X$. Since the condition only depends on $j^{1}_{S}F$, it suffices to look for $F$
of type
\[ F= F_0+ F_1\in \textrm{gr}_{0}(\Omega_{E}^{0})\oplus \textrm{gr}_{1}(\Omega_{E}^{0})= C^{\infty}(S)\oplus \Gamma(E^*).\]
So $F|_{S}= F_0$, $d_{S}^{1}F= F_1$. To compute $j^{1}_{S}([\gamma, F]_{\ltimes})$, we write:
\begin{align*}
\varphi_t([\gamma, F]_{\ltimes})&=[\varphi_t(\gamma), \varphi_t(F)]_{\ltimes}=[-t^{-1}\omega_S+d_{S}^{1}\gamma+ t d_{S}^{2}\gamma+ t^2(\ldots),t^{-1} F_0+ F_1]_{\ltimes}=\\
&=t^{-1}[d_{S}^{1}\gamma, F_0]_{\ltimes}+[d_{S}^{1}\gamma, F_1]_{\ltimes}+t(\ldots),
\end{align*}
where we have used that $\omega_S$ and $F_0$ commute with $\Omega_E$ (Lemma \ref{OmegaS-Central}). So we have to solve the following
cohomological equations:
\begin{equation}\label{F0-F1}
[d_{S}^{1}\gamma, F_0]_{\ltimes}= X|_{S},\ \ \  [d_{S}^{1}\gamma, F_1]_{\ltimes}= d_{S}^{1}X.
\end{equation}
By Lemma \ref{cohom-restricted-algebroid}, the relevant cohomologies are precisely the ones assumed to vanish in the theorem. So %in order to prove existence of $F_0$ and $F_1$,
it is enough to show that $X|_{S}$ and $d_{S}^{1}X$ are closed with respect to the differential
$[d_{S}^{1}\gamma,\cdot]_{\ltimes}$. These are precisely the first order consequences of the equation (\ref{EquationV}) that $X$ satisfies. By
(\ref{Auxiliary}), we have that $\varphi_t(\dot{\gamma}_1)=t\dot{\gamma}_t$, hence $j^1_S(\dot{\gamma}_1)=0$. So the Newton formula of Lemma
\ref{Newton-formula} applied to (\ref{EquationV}) gives:
\begin{equation}\label{Equation_X}
[d_{S}^{1}\gamma, X|_S]_{\ltimes}= 0,\ \  [d_{S}^{1}\gamma, d_{S}^{1}X]_{\ltimes}+ [d_{S}^{2}\gamma, X|_{S}]_{\ltimes}= 0.
\end{equation}
Hence $X|_{S}$ is closed, and so we can find $F_0$ satisfying the first equation in (\ref{F0-F1}). In particular, using also Lemma
\ref{OmegaS-Central} this shows that
\[X|_{S}=[d_{S}^{1}\gamma, F_0]_{\ltimes}=dF_0\in\Omega(S).\]
Hence $X|_{S}$ commutes with $\Omega_E$, and since $d_{S}^{2}\gamma\in\Omega_E$, the second equation of (\ref{Equation_X}) becomes
$[d_{S}^{1}\gamma, d_{S}^{1}X]_{\ltimes}=0$. This finishes the proof.

\begin{remark}\label{remark-corollary}
The previous arguments reveal a certain cohomology class related to the linearization problem, which we will describe now more explicitly. First
of all, $\pi$ itself defines a class $[\pi]\in H^{2}_{\pi}(M)_{S}$.
%There is another class, denoted \[ [\pi|_{S}] \in H^{2}(M, \pi)_{S},\] defined as follows.
Consider a tubular neighborhood $p:E\rmap S$. Since any two tubular neighborhoods are isotopic, it follows that the class
\[[p^*(\omega_S)]\in H^{\bullet}_{\mathrm{dR}}(M)_{S}= \lim_{S\subset U} H^{\bullet}_{\mathrm{dR}}(U),\]
is independent of $p$. On the other hand, $\pi$ gives a chain map between the complexes
\[\wedge^{\bullet} \pi_{|U}^{\sharp}: (\Omega^{\bullet}(U),d) \rmap (\mathfrak{X}^{\bullet}(U),d_{\pi}).\]
Let $[\pi|_{S}]\in H^{2}_{\pi}(M)_{S}$ be the image of $[p^*(\omega_S)]$ under the induced map in cohomology.
%$H^{\bullet}_{\mathrm{dR}}(M)_{S}\rmap H^{\bullet}_{\pi}(M)_{S}$.
\begin{definition}
The linearization class associated to an embedded leaf $S$ of a Poisson manifold $(M,\pi)$ is defined by
\[l_{\pi,S}:=[\pi]-[\pi_{|S}]\in H^2_{\pi}(M)_S.\]
\end{definition}
It would be interesting to study this class in more detail.
\begin{corollary} \label{remark-corollary2} In the previous theorem, the condition
$H^{2}_{\pi}(M)_{S}= 0$ can be replaced by the condition $l_{\pi,S}=0$.
\end{corollary}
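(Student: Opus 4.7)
The plan is to revisit the proof of Theorem~\ref{theorem-1} and locate the single step where the hypothesis $H^{2}_{\pi}(M)_{S}=0$ is invoked: producing a solution $X\in\Omega^{1}_{E}$ on a neighborhood of $S$ to the cohomological equation $[\gamma,X]_{\ltimes}=\dot{\gamma}_{1}$. All the remaining ingredients---the Moser flow, the reduction from the homotopy equation to this equation for $X$, and the correction step making $j^{1}_{S}X=0$ via the assumptions $H^{1}_{\pi,S}(M)=0$ and $H^{1}_{\pi,S}(M,\nu_{S}^{*})=0$---carry through verbatim. So the corollary reduces to showing that $l_{\pi,S}=0$ alone still forces $\dot{\gamma}_{1}$ to be $ad_{\gamma}$-exact on some open neighborhood of $S$.

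Transporting the problem across the chain isomorphism $\tau_{\gamma}\colon(\mathfrak{X}^{\bullet}(E),d_{\pi})\to(\Omega_{E}^{\bullet},ad_{\gamma})$ of Proposition~\ref{PoissonCohomology}, what has to be established is the class identity
\[
[\tau_{\gamma}^{-1}(\dot{\gamma}_{1})]=l_{\pi,S}\quad\text{in}\quad H^{2}_{\pi}(M)_{S},
\]
so that the vanishing of the linearization class supplies the desired primitive $X$.

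To prove this, I would compute $\dot{\gamma}_{1}$ from the explicit formula $\gamma_{t}=\varphi_{t}(\gamma)+(t^{-1}-1)\omega_{S}$. Using the scaling behaviour of $\varphi_{t}$ on each of the three components $\gamma^{\mathrm{v}},\Gamma_{\gamma},\mathbb{F}_{\gamma}$ of $\gamma$ and the identity $\tfrac{d}{dt}m_{t}^{*}|_{t=1}=\mathcal{L}_{\mathcal{E}}$, where $\mathcal{E}$ is the Euler vector field of $E$, one obtains
\[
\dot{\gamma}_{1}=(\gamma^{\mathrm{v}}-\mathbb{F}_{\gamma}-\omega_{S})+\mathcal{L}_{\mathcal{E}}(\gamma).
\]
The second summand is $ad_{\gamma}$-exact: because each $m_{t}^{*}\gamma$ is again a Dirac element ($m_{t}$ preserves $V$), differentiating $[m_{t}^{*}\gamma,m_{t}^{*}\gamma]_{\ltimes}=0$ in $t$ shows that $\mathcal{L}_{\mathcal{E}}(\gamma)$ is a cocycle, and pulling back through $\tau_{\gamma}^{-1}$ yields $\mathcal{L}_{\mathcal{E}}(\pi)=[\mathcal{E},\pi]=-d_{\pi}(\mathcal{E})$, a Poisson coboundary. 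For the first summand, I would use the explicit form~(\ref{tau-explicit}) of $\tau_{\gamma}$ together with~(\ref{2Form}) (which says that $\pi^{\sharp}$ agrees with $-\mathbb{F}_{\gamma}^{-\sharp}$ on $V^{\circ}=p^{*}T^{*}S$ once $H$ is identified with $p^{*}TS$ via $dp$) to verify
\[
\tau_{\gamma}(\pi)=\gamma^{\mathrm{v}}-\mathbb{F}_{\gamma},\qquad \tau_{\gamma}(\wedge^{2}\pi^{\sharp}(p^{*}\omega_{S}))=p^{*}\omega_{S},
\]
and then invoke the canonical identification $\omega_{S}=p^{*}\omega_{S}$ inside $\Omega_{E}$ from Lemma~\ref{OmegaS-Central} to conclude that $\gamma^{\mathrm{v}}-\mathbb{F}_{\gamma}-\omega_{S}=\tau_{\gamma}(\pi-\wedge^{2}\pi^{\sharp}(p^{*}\omega_{S}))$.

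The main obstacle is the sign bookkeeping in this last step: the inverse-transpose duality~(\ref{2Form}) between $\pi^{\mathrm{h}}$ and $\mathbb{F}_{\gamma}$ must be tracked carefully through the wedge in $\tau_{\gamma}=\wedge^{2}f_{\gamma}$, and the exactness of $\mathcal{L}_{\mathcal{E}}(\gamma)$ needs to be extracted compatibly with the distinction between $\widetilde{\Omega}_{E}$ and $\Omega_{E}$ (so that the coboundary producing $\mathcal{L}_{\mathcal{E}}(\gamma)$ lies in $\Omega_{E}^{1}$ and hence can be absorbed into the primitive $X$). Once this class identity is verified, the rest of the proof of Theorem~\ref{theorem-1} proceeds unchanged with $H^{2}_{\pi}(M)_{S}=0$ replaced by $l_{\pi,S}=0$.
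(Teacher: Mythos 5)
Your proposal is correct and follows essentially the same route as the paper's proof: isolate the only use of $H^{2}_{\pi}(M)_{S}=0$ as the exactness of $\dot{\gamma}_{1}$, compute $\dot{\gamma}_{1}=\pi^{\mathrm{v}}-\mathbb{F}_{\pi}-\omega_{S}+[\mathcal{E},\gamma]_{\ltimes}$ via the Liouville vector field, discard the manifestly exact Euler term, and identify $\tau^{-1}(\pi^{\mathrm{v}}-\mathbb{F}_{\pi}-\omega_{S})=\pi-\wedge^{2}\pi^{\sharp}(\omega_S)$ using (\ref{2Form}) and (\ref{tau-explicit}), so that its class is $l_{\pi,S}$. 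The only cosmetic difference is that you first check $\mathcal{L}_{\mathcal{E}}(\gamma)$ is a cocycle before recognizing it as the coboundary $\pm ad_{\gamma}(\mathcal{E})$, which is an unnecessary detour but harmless.
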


\begin{proof} In the proof above we only used the vanishing of
$[\dot{\gamma}_{1}]$, hence, by Proposition \ref{PoissonCohomology}, it suffices to show that $[\tau_{\pi}^{-1}(\dot{\gamma}_1)]=l_{\pi,S}$.
%\[[\tau_{\pi}^{-1}(\dot{\gamma}_1)]=[\pi]-[\pi|_{S}]\in H^2(M,\pi)_S.\]
By the definition of $\varphi_t$, we have that
%\begin{equation}\label{gamma-t}
\[\gamma_t=(t^{-1}-1)\omega_S+t^{-1}m_t^*(\mathbb{F}_{\pi})+m_t^{*}(\Gamma_{\pi})+tm_t^*(\pi^{\mathrm{v}}).\]
%\end{equation}
%Observe that
%\[\frac{d}{dt}_{|t=1}m_t^{*}(u)=[\mathcal{E},u]_{\ltimes}, \ \ (\forall) u\in\widetilde{\Omega}_E,\]
%where $\mathcal{E}=\sum y_i\frac{\partial}{\partial y_i}$ is the
%vector field whose flow is $m_{e^t}$. Therefore, we obtain
Since $m_{e^t}$ is the flow of the Liouville vector field $\mathcal{E}=\sum y_i\frac{\partial}{\partial y_i}$, we obtain
\[\dot{\gamma}_1=-\omega_S-\mathbb{F}_{\pi}+\pi^{\mathrm{v}}+[\mathcal{E},\gamma_1]_{\ltimes}.\]
Using (\ref{2Form}), (\ref{tau-explicit}) and the fact that $\pi^{\mathrm{v}}$ is vertical, we obtain the conclusion
\[\tau_{\pi}^{-1}(\pi^{\mathrm{v}}-\mathbb{F}_{\pi}-\omega_{S})=%\pi^{\mathrm{v}}-\wedge^2\pi^{h\sharp}(\mathbb{F}_{\pi}+\omega_S)=
\pi^{\mathrm{v}}+\pi^{h}-\wedge^2\pi^{\sharp}(\omega_S)=\pi-\wedge^2\pi^{\sharp}(\omega_S).\]
\end{proof}
\end{remark}

%%%%%%%%%%%%%%%%%%%%%%%%%%%%%%%%%%%%%%%%%%%%%%%%%%
%%%%%%%%%%%%%%%%%%%%%%%%%%%%%%%%%%%%%%%%%%%%%%%%%%
%%%%%%%%%%%%%%%%%%%%%%%%%%%%%%%%%%%%%%%%%%%%%%%%%%
%%%%%%%%%%%%%%%%%%%%%%%%%%%%%%%%%%%%%%%%%%%%%%%%%%
%%%%%%%%%%%%%%%%%%%%%%%%%%%%%%%%%%%%%%%%%%%%%%%%%%
%%%%%%%%%%%%%%%%%%%%%%%%%%%%%%%%%%%%%%%%%%%%%%%%%%
%%%%%%%%%%%%%%%%%%%%%%%%%%%%%%%%%%%%%%%%%%%%%%%%%%
%%%%%%%%%%%%%%%%%%%%%%%%%%%%%%%%%%%%%%%%%%%%%%%%%%
%%%%%%%%%%%%%%%%%%%%%%%%%%%%%%%%%%%%%%%%%%%%%%%%%%
\section{Proof of the main theorem; step 2: Integrability}
\label{Proof of the main theorem; step 2: Integrability}
%%%%%%%%%%%%%%%%%%%%%%%%%%%%%%%%%%%%%%%%%%%%%%%%%%
%%%%%%%%%%%%%%%%%%%%%%%%%%%%%%%%%%%%%%%%%%%%%%%%%%
%%%%%%%%%%%%%%%%%%%%%%%%%%%%%%%%%%%%%%%%%%%%%%%%%%
%%%%%%%%%%%%%%%%%%%%%%%%%%%%%%%%%%%%%%%%%%%%%%%%%%
%%%%%%%%%%%%%%%%%%%%%%%%%%%%%%%%%%%%%%%%%%%%%%%%%%
%%%%%%%%%%%%%%%%%%%%%%%%%%%%%%%%%%%%%%%%%%%%%%%%%%
%%%%%%%%%%%%%%%%%%%%%%%%%%%%%%%%%%%%%%%%%%%%%%%%%%
%%%%%%%%%%%%%%%%%%%%%%%%%%%%%%%%%%%%%%%%%%%%%%%%%%
%%%%%%%%%%%%%%%%%%%%%%%%%%%%%%%%%%%%%%%%%%%%%%%%%%

In this section we show that the conditions of our main theorem imply the integrability of the Poisson structure around the symplectic leaf
which, in turn, implies that the cohomological conditions from Theorem \ref{theorem-1} are satisfied. As in %the proof of Conn's linearization theorem
\cite{CrFe-Conn}, this step will be divided into three
sub-steps (the three subsections of this section):
\begin{itemize}
\item Step 2.1: Integrability implies the needed cohomological conditions.
\item Step 2.2: Existence of a ``nice'' symplectic realization implies integrability.
\item Step 2.3: Prove the existence of such ``nice'' symplectic realizations.
\end{itemize}
In the first sub-step we will also finish the proof of Proposition \ref{main-cor}.

%By integrability we mean here the integrability of the algebroids $T^*M$ associated to Poisson manifolds or,
%equivalently, the integrability of the Poisson manifolds by symplectic groupoids. See subsection \ref{The Poisson homotopy bundle III: via  symplectic groupoids}.

%%%%%%%%%%%%%%%%%%%%%%%%%%%%%%%%%%%%%%%%%%%%%%%%%%
%%%%%%%%%%%%%%%%%%%%%%%%%%%%%%%%%%%%%%%%%%%%%%%%%%
%%%%%%%%%%%%%%%%%%%%%%%%%%%%%%%%%%%%%%%%%%%%%%%%%%
%%%%%%%%%%%%%%%%%%%%%%%%%%%%%%%%%%%%%%%%%%%%%%%%%%
%%%%%%%%%%%%%%%%%%%%%%%%%%%%%%%%%%%%%%%%%%%%%%%%%%
%%%%%%%%%%%%%%%%%%%%%%%%%%%%%%%%%%%%%%%%%%%%%%%%%%
\subsection{Step 2.1: Reduction to integrability}
\label{Step 2.1: Reduction to integrability}
%%%%%%%%%%%%%%%%%%%%%%%%%%%%%%%%%%%%%%%%%%%%%%%%%%
%%%%%%%%%%%%%%%%%%%%%%%%%%%%%%%%%%%%%%%%%%%%%%%%%%
%%%%%%%%%%%%%%%%%%%%%%%%%%%%%%%%%%%%%%%%%%%%%%%%%%
%%%%%%%%%%%%%%%%%%%%%%%%%%%%%%%%%%%%%%%%%%%%%%%%%%
%%%%%%%%%%%%%%%%%%%%%%%%%%%%%%%%%%%%%%%%%%%%%%%%%%
%%%%%%%%%%%%%%%%%%%%%%%%%%%%%%%%%%%%%%%%%%%%%%%%%%
As promised:

%In this subsection we show that integrability implies the cohomological conditions from Theorem \ref{theorem-1} and we finish the proof of
%Proposition \ref{main-cor}.

%The idea of using integrability (i.e. a Lie $\mathcal{G}$ groupoid integrating $A= T^*M$)
%in order to prove the vanishing of the cohomologies from Theorem \ref{theorem-1} is very simple.
%First of all, all the cohomologies involved are Lie algebroid cohomologies. Then, with
%the Lie groupoid at hand, one can try to integrate algebroid cocycles (in the relevant cohomologies)
%to the groupoid level. This is the idea of Van Est maps, used by Van Est in the case of Lie groups to
%prove Lie's Theorem III. The rather straightforward generalization to Lie groupoids was carried out in %\cite{Cra} (Theorem 4). Once we are at the groupoid level, if the groupoid is compact (or just proper %\cite{Cra}), we can use averaging in order to prove the triviality of groupoid cocycles- see Proposition 1 in %\emph{loc.cit}. The outcome is the following.

\begin{theorem}\label{TheoremRedInt}
Let $(M,\pi)$ be a Poisson manifold, $x\in M$, and $S$ be the symplectic leaf through $x$. If $P_x$, the homotopy bundle at $x$, is smooth and
compact, then
\[\ H^{1}_{\pi, S}(M)= 0, \ H^{1}_{\pi, S}(M, \nu_{S}^{*})= 0.\]
If moreover $H^2(P_x)= 0$ and $S$ admits an open neighborhood $U$
whose associated groupoid $\Sigma(U, \pi|_{U})$ is smooth and
Hausdorff, then also
\[ H^{2}_{\pi}(M)_{S}=  0.\]
\end{theorem}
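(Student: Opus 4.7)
The overall strategy is Van Est plus averaging on a proper Lie groupoid. All three cohomologies in the statement are cohomologies of the Lie algebroid $A_S = T^{*}M|_S$ or of the cotangent algebroid $T^{*}M$ itself (in the germ sense), possibly with coefficients in the conormal representation $\nu_S^{*}$. I would exploit that under the hypotheses these algebroids admit integrations by compact, respectively locally compact-with-compact-isotropy, Lie groupoids.

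For the first two vanishings, the algebroid $A_S$ is integrated by the gauge groupoid $\mathcal{G}_S := (P_x\times P_x)/G_x$ of the Poisson homotopy bundle (see Remark~\ref{gauge-groupoids}). Since $P_x$ is compact, $\mathcal{G}_S$ is a compact Lie groupoid over $S$ whose $s$-fibres are all diffeomorphic to the 1-connected manifold $P_x$. By Crainic's Van Est theorem, the comparison map
\[
H^{i}(A_S,V)\longrightarrow H^{i}_{d}(\mathcal{G}_S,V)
\]
is an isomorphism for $i\le 1$ (only $H^{1}(P_x)=0$ is needed, which follows from $\pi_1(P_x)=0$) for any representation $V$. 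Applying this with trivial coefficients and with $V=\nu_S^{*}$ reduces the problem to showing vanishing of differentiable groupoid cohomology of $\mathcal{G}_S$ with values in $V$ in positive degree. This follows from averaging with a normalized Haar system on the compact groupoid $\mathcal{G}_S$: the standard formula $h(\sigma)_{g_1,\dots,g_{k-1}}:=\int_{t^{-1}(s(g_1))}\sigma(a,g_1,\dots,g_{k-1})\,da$ defines a contracting homotopy on the complex of differentiable cochains, so $H^{i}_{d}(\mathcal{G}_S,V)=0$ for all $i>0$.

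For the germ cohomology $H^{2}_{\pi}(M)_S$, I use the symplectic groupoid $\Sigma=\Sigma(U,\pi|_U)$ provided by the stronger integrability hypothesis, which is smooth and Hausdorff. The $s$-fibre of $\Sigma$ over $x$ is $P_x$, and the $s$-fibres over all points of $S$ are isomorphic (as principal bundles) to $P_x$; in particular they are compact with $H^{2}=0$. A tube-lemma style argument applied to the $s$-fibration of $\Sigma$ over a compact $S$ yields a smaller saturated neighbourhood $V\subset U$ of $S$ on which the restricted groupoid $\Sigma|_V\rightrightarrows V$ has compact (hence proper) $s$-fibres. By Ehresmann (proper submersion) one can arrange, after further shrinking, that all nearby $s$-fibres are diffeomorphic to $P_x$, so in particular they satisfy $H^{i}(s^{-1}(y))=0$ for $1\le i\le 2$. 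Van Est then yields that
\[
H^{2}_{\pi|_V}(V)\longrightarrow H^{2}_{d}(\Sigma|_V)
\]
is an isomorphism, and the right-hand side vanishes by averaging on the proper groupoid $\Sigma|_V$ as in the first part. Passing to the direct limit over such $V$ gives $H^{2}_{\pi}(M)_S=0$.

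The delicate point, and the place where the full strength of the hypotheses enters, is the germ argument: one must produce an honest proper neighbourhood $\Sigma|_V$ of the identity bisection in $\Sigma$ on which both compactness of $s$-fibres and the vanishing $H^{2}(s^{-1}(y))=0$ are preserved, uniformly in $y\in V$. This is exactly where the compactness of $P_x$ (to apply the tube lemma and Ehresmann) and the vanishing $H^{2}(P_x)=0$ (to guarantee $s$-fibre $H^{2}$-vanishing after shrinking) are used in a combined way; the Hausdorffness of $\Sigma(U,\pi|_U)$ is what lets the averaging argument produce a genuine global primitive rather than only a formal one.
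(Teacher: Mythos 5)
Your proposal is correct and follows essentially the same route as the paper: the first part is the Van Est isomorphism plus vanishing of differentiable cohomology for the compact gauge groupoid $(P_x\times P_x)/G_x$ integrating $A_S$, and the second part shrinks $U$ to a saturated open $V$ over which the $s$-fibres of the symplectic groupoid remain diffeomorphic to $P_x$ (hence compact and cohomologically $2$-connected) and applies Van Est plus properness again. The only cosmetic difference is that the paper realizes the shrinking step by exhibiting an explicit open subgroupoid of $\Sigma(U)$ cut out by conditions on the $s$- and $t$-fibres (openness via local Reeb stability), whereas you use the tube lemma and Ehresmann; these are the same argument.
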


\begin{proof}
The first part of the proof is completely similar to that of Theorem 2 in \cite{CrFe-Conn}: a consequence of the Van Est isomorphism and the
vanishing of differentiable cohomology for proper groupoids. More precisely: the conditions on $P_x$ imply that the groupoid $\mathcal{G}(A_S)$
of $A_S= T^{*}M|_{S}$ is smooth and compact (hence the differentiable cohomology with coefficients vanishes); since its $s$-fibers are
1-connected, the Van Est map with coefficients is an isomorphism in degrees 1 and 2; hence the cohomology of $T^*M|_{S}$ in degrees 1 and 2 with
any coefficients vanishes.

For the second part, let $\Sigma(U)=\Sigma(U, \pi|_{U})$ be the symplectic groupoid integrating $(U,\pi_{|U})$. It suffices to show that, for
any open $W\subset U$ containing $S$, there exists a smaller one $V$ such that $H^{2}_{\pi}(V)= 0$. Proceeding as
in the first part, it suffices to produce $V$'s for which $\Sigma(V)= \Sigma(V, \pi|_{V})$ has $s$-fibers which are compact and cohomologically
2-connected. Let $\mathcal{G}\subset \Sigma(U)$ be the set of arrows with source and target inside $W$ and for which both the $s$-fiber and the
$t$-fiber are diffeomorphic to $P_x$. By local Reeb stability applied to the foliation by the $s$-fibers (and $t$-fibers respectively), we see
that all four conditions are open, therefore $\mathcal{G}\subset\Sigma(U)$ is open, and by assumption, all arrows above $S$ are in
$\mathcal{G}$. By the way $\mathcal{G}$ was defined, we see that it is an open subgroupoid over the invariant open $V:=s(\mathcal{G})$. So
$\mathcal{G}=\Sigma(V)$ and it has all the desired properties.
\end{proof}

\begin{proof}[End of the proof of Proposition \ref{main-cor}]
We will adapt the previous proof, making use of Remark \ref{remark-corollary}. We have to show that
\[ [\pi]- [\pi|_{S}]= 0 \in H^{2}_{\pi}(M)_{S}.\]
We show that, for any tubular neighborhood $p: W\rmap S$ of $S$ with $W\subset U$,
there exists a smaller one $V$ such that
\begin{equation*}
[\pi]- [\pi|_{S}]=0 \in H^{2}_{\pi}(V).
\end{equation*}
Let $V$ be as in the previous proof. We may assume that $V$ is connected; if not, we replace it by the component containing $S$. Since
$\Sigma(V)$ is still proper, it suffices to show that the class above is in the image of the Van Est map of $\Sigma(V)$ (in degree $2$). By
Corollary 2 in \cite{Cra}, this image consists of elements $[\omega]\in H^{2}_{\pi}(V)$ for which $\int_{\gamma}\omega=0$, for all $2$-spheres
$\gamma$ in the $s$-fibers of $\Sigma(V)$. Hence it suffices to show that $[\pi]-[\pi_{|S}]$ satisfies this condition. As before, let
$\widetilde{\omega}_S= p^*\omega_S$; also consider the symplectic form $\Omega$ on the symplectic groupoid. The right invariant 2-form on the
$s$-fibers of $\Sigma(V)$ corresponding to $\widetilde{\omega}_S$ is $t^*(\widetilde{\omega}_{S|V})$ restricted to the $s$-fibers. The right
invariant 2-form on the $s$-fibers of $\Sigma(V)$ corresponding to the class $[\pi]$ is the pullback by $t$ to the $s$-fibers of the symplectic
structure on the leaves of $(V,\pi_{|V})$; on the other hand, it is also the restriction to the $s$-fibers of $\Omega$. In particular the two
coincide on the $s$-fibers above $S$. For the correspondence between Poisson cocycles and right invariant, foliated 2-forms the symplectic
groupoid, see \cite{WeinXu}. Consider
\[\omega:=\Omega-t^*(\widetilde{\omega}_{S|V}).\]
Let $\gamma$ a 2-sphere in an $s$-fiber of $\Sigma(V)$. Since $V$ is connected, we can find a homotopy between $\gamma$ and a 2-sphere
$\gamma_1$ which lies in an $s$-fiber over $S$. Since $\omega$ is closed, we have that $\int_{\gamma}\omega=\int_{\gamma_1}\omega$, and since
the restriction of $\omega$ to $s$-fibers over $S$ vanishes, it follows that $\int_{\gamma_1}\omega=0$. This implies the conclusion.
\end{proof}

%%%%%%%%%%%%%%%%%%%%%%%%%%%%%%%%%%%%%%%%%%%%%%%%%%
%%%%%%%%%%%%%%%%%%%%%%%%%%%%%%%%%%%%%%%%%%%%%%%%%%
%%%%%%%%%%%%%%%%%%%%%%%%%%%%%%%%%%%%%%%%%%%%%%%%%%
%%%%%%%%%%%%%%%%%%%%%%%%%%%%%%%%%%%%%%%%%%%%%%%%%%
%%%%%%%%%%%%%%%%%%%%%%%%%%%%%%%%%%%%%%%%%%%%%%%%%%
%%%%%%%%%%%%%%%%%%%%%%%%%%%%%%%%%%%%%%%%%%%%%%%%%%
\subsection{Step 2.2: Reduction to the existence of "nice" symplectic realizations}
%%%%%%%%%%%%%%%%%%%%%%%%%%%%%%%%%%%%%%%%%%%%%%%%%%
%%%%%%%%%%%%%%%%%%%%%%%%%%%%%%%%%%%%%%%%%%%%%%%%%%
%%%%%%%%%%%%%%%%%%%%%%%%%%%%%%%%%%%%%%%%%%%%%%%%%%
%%%%%%%%%%%%%%%%%%%%%%%%%%%%%%%%%%%%%%%%%%%%%%%%%%
%%%%%%%%%%%%%%%%%%%%%%%%%%%%%%%%%%%%%%%%%%%%%%%%%%
%%%%%%%%%%%%%%%%%%%%%%%%%%%%%%%%%%%%%%%%%%%%%%%%%%

Next, we show that the integrability condition required in the last theorem is implied by
the existence of a symplectic realization with some specific properties.

We will use the following notation. Given a symplectic realization $\mu$, we denote by $\mathcal{F}(\mu)$ the foliation defined by $\mu$,
identified also with the involutive distribution $\textrm{Ker}(d\mu)$. Its symplectic orthogonal is a new
distribution $\mathcal{F}(\mu)^{\perp}$. Since $\mu$ is a
Poisson map, it is well-known (and follows easily) that $\mathcal{F}(\mu)^{\perp}$ is also involutive.

\begin{theorem}\label{theorem-step-2.2} Let $(M, \pi)$ be a Poisson manifold and let $S$ be a symplectic leaf. Assume that there exists a symplectic realization
\[ \mu: (\Sigma, \Omega)\rmap (U, \pi|_{U})\]
of some open neighborhood $U$ of $S$ in $M$ such that any leaf of
the foliation $\mathcal{F}^{\perp}(\mu)$ which intersects
$\mu^{-1}(S)$ is compact and 1-connected.

Then there exists an open neighborhood $V\subset U$ of $S$ such that
the Weinstein groupoid $\Sigma(V, \pi|_{V})$ is Hausdorff and
smooth.
\end{theorem}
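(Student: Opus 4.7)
The plan is to construct the source 1-connected integration $\Sigma(V,\pi|_V)$ directly from the symplectic realization $\mu$, following the method introduced in \cite{CrFe-Conn} for the geometric proof of Conn's theorem. The starting point is that a symplectic realization $\mu:(\Sigma,\Omega)\rmap(M,\pi)$ induces a cotangent algebroid action of $T^{*}M$ on $\Sigma$ (sending $\alpha$ to $-\Omega^{-1}(\mu^{*}\alpha)$) whose orbits coincide with the leaves of $\mathcal{F}^{\perp}(\mu)$; so compact 1-connected $\mathcal{F}^{\perp}$-leaves are exactly the compact 1-connected $t$-fibers (equivalently, via inversion, $s$-fibers) of the groupoid we aim to build.

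First, I would promote the hypothesis from leaves meeting $\mu^{-1}(S)$ to all leaves in a full $\mathcal{F}^{\perp}$-saturated open set. Any compact 1-connected leaf of a foliation has trivial linear holonomy (since its fundamental group is trivial), so local Reeb stability provides an $\mathcal{F}^{\perp}$-saturated tubular neighborhood in which every leaf is diffeomorphic to the central one, hence again compact and 1-connected. Unioning these stability neighborhoods over all leaves meeting $\mu^{-1}(S)$ yields an open $\mathcal{F}^{\perp}$-saturated set $W\subset\Sigma$ containing $\mu^{-1}(S)$ on which every $\mathcal{F}^{\perp}$-leaf is compact and 1-connected.

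Next I would shrink $W$ to one of the form $\mu^{-1}(V)$ for a saturated open $V\subset U$ with $S\subset V$. Since every $\mathcal{F}^{\perp}$-leaf projects under $\mu$ into a single symplectic leaf of $(M,\pi)$, symplectic saturation in $M$ pulls back to $\mathcal{F}^{\perp}$-saturation in $\Sigma$, and the topological tube lemma, applied as at the end of the proof of Proposition~\ref{the-Moser-path}, produces such a $V$.

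The bulk of the work is the last step: to verify that $\mu^{-1}(V)$, equipped with $\mu$ as source, $\Omega|_{\mu^{-1}(V)}$ as symplectic form, and a target map recovered from the leaf space of $\mathcal{F}^{\perp}$, carries a smooth Hausdorff symplectic groupoid structure over $V$ with 1-connected $s$-fibers; the Lie~II uniqueness theorem for Lie algebroids then identifies it with $\Sigma(V,\pi|_{V})$. By Step~1 the $\mathcal{F}^{\perp}$-leaves in $\mu^{-1}(V)$ are compact with trivial holonomy, hence the leaf space $\mu^{-1}(V)/\mathcal{F}^{\perp}$ is a smooth Hausdorff manifold, and since in a symplectic groupoid every target fiber is a single $\mathcal{F}^{\perp}$-leaf, this leaf space is naturally in bijection with $V$, producing the target map $t$. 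Multiplication, unit and inverse are then forced by the symplectic geometry. The main obstacle lies precisely in this final step — verifying smoothness of multiplication, compatibility with $\Omega$, and 1-connectedness of the $s$-fibers — and my intention is to follow the corresponding argument of Theorem~2 of \cite{CrFe-Conn} mutatis mutandis.
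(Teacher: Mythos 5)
Your first reduction step is essentially the paper's: the set $\Sigma'$ of points whose $\mathcal{F}^{\perp}(\mu)$-leaf is compact and 1-connected is open by local Reeb stability, it contains $\mu^{-1}(S)$ by hypothesis, and $U'=\mu(\Sigma')$ is an open neighborhood of $S$ because $\mu$ is a submersion. The problem is with everything after that.

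The central gap is your final step: you propose to endow $\mu^{-1}(V)$ itself, with $\mu$ as source map, with the structure of the symplectic groupoid $\Sigma(V,\pi|_V)$. This cannot work for an arbitrary symplectic realization. First, the dimensions need not match: replacing $\Sigma$ by $\Sigma\times T^*N$ and $\mu$ by $\mu\circ pr_1$ preserves the hypothesis (the new $\mathcal{F}^{\perp}$-leaves are the old ones times points), yet $\mu^{-1}(V)$ now has the wrong dimension to be $\Sigma(V,\pi|_V)$. Second, your identification of the leaf space $\mu^{-1}(V)/\mathcal{F}^{\perp}$ with $V$ is circular: the fact that each $t$-fiber is a single $\mathcal{F}^{\perp}$-leaf holds when $\mu$ is already the source map of a symplectic groupoid, which is what you are trying to prove; for a general realization there is no unit section and no reason for the leaf space to be $V$ (restrict the source map of $\Sigma(M,\pi)$ to an open set missing the units to see that multiplication, unit and inverse are not ``forced by the symplectic geometry''). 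Relatedly, you never use --- or establish --- completeness of $\mu$, which is the actual role of the compactness hypothesis in the paper: the Hamiltonian vector fields $X_{\mu^*(f)}$ are tangent to the $\mathcal{F}^{\perp}$-leaves, so compactness of those leaves makes $\mu$ a \emph{complete} symplectic realization, and Theorem~8 of \cite{CrFe2} then gives smoothness of the abstract Weinstein groupoid $\Sigma(M,\pi)$ (defined as a quotient of the cotangent path space, not built from $\Sigma$). Finally, Hausdorffness comes from the isomorphism $\Sigma(M,\pi)\times_M\Sigma\cong\mathcal{G}(\mathcal{F}(\mu)^{\perp})$ of \cite{CrFe2}: 1-connectedness of the leaves makes the homotopy groupoid of $\mathcal{F}^{\perp}$ the graph of an equivalence relation inside $\Sigma\times\Sigma$, hence Hausdorff, and this is transferred to $\Sigma(M,\pi)$ through the fibered product. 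Your proposal uses 1-connectedness only for trivial holonomy of the leaf space, which does not address Hausdorffness of $\Sigma(V,\pi|_V)$ at all.
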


\begin{proof}
We may assume that all leaves of $\mathcal{F}(\mu)^{\perp}$ are compact and 1-connected. Otherwise, we replace $\Sigma$ by $\Sigma'$
and $U$ by $U'= \mu(\Sigma')$, where $\Sigma'$ is defined as the set of points $y\in\Sigma$ with the property that the leaf of
$\mathcal{F}(\mu)^{\perp}$ through $y$ is compact and 1-connected. Local Reeb stability implies that $\Sigma'$ is open in $\Sigma$. The
hypothesis implies that $\mu^{-1}(S)\subset \Sigma'$ and, since $\mu$ is open, $U'$ is an open neighborhood of $S$.

Clearly, we may also assume that $U= M$. Hence we have a symplectic realization
\[ \mu: (\Sigma, \Omega)\rmap (M, \pi)\]
with the property that all the leaves of $\mathcal{F}(\mu)^{\perp}$ are compact and 1-connected. We claim that $\Sigma(M, \pi)$ has the
desired properties. By Theorem 8 in \cite{CrFe2}, if the symplectic realization $\mu$ is complete, then $\Sigma(M, \pi)$ is smooth. The
compactness assumption on the leaves of $\mathcal{F}(\mu)^{\perp}$ implies that $\mu$ is complete since the Hamiltonian vector fields of type
$X_{\mu^*(f)}$ are tangent to these leaves. For Hausdorffness, we take a closer look to the argument of \cite{CrFe2}. It is based on a natural
isomorphism of groupoids
\[ \Sigma(M, \pi)\times_{M}\Sigma \cong \mathcal{G}(\mathcal{F}(\mu)^{\perp}),\]
where the left hand side is the fibered product over $s$ and $\mu$, and the right hand side is the homotopy groupoid of the foliation
$\mathcal{F}(\mu)^{\perp}$. %- obtained by putting together the homotopy groupoids of all the leaves.
Since homotopy groupoids are always smooth, \cite{CrFe2} concluded that $\Sigma(M, \pi)$ is smooth. In our case, the leaves of
$\mathcal{F}(\mu)^{\perp}$ are 1-connected, hence the homotopy groupoid is a subgroupoid of $M\times M$. So it is Hausdorff, from which it
follows easily that also $\Sigma(M, \pi)$ is Hausdorff.
\end{proof}

%%%%%%%%%%%%%%%%%%%%%%%%%%%%%%%%%%%%%%%%%%%%%%%%%%
%%%%%%%%%%%%%%%%%%%%%%%%%%%%%%%%%%%%%%%%%%%%%%%%%%
%%%%%%%%%%%%%%%%%%%%%%%%%%%%%%%%%%%%%%%%%%%%%%%%%%
%%%%%%%%%%%%%%%%%%%%%%%%%%%%%%%%%%%%%%%%%%%%%%%%%%
%%%%%%%%%%%%%%%%%%%%%%%%%%%%%%%%%%%%%%%%%%%%%%%%%%
%%%%%%%%%%%%%%%%%%%%%%%%%%%%%%%%%%%%%%%%%%%%%%%%%%
\subsection{Constructing symplectic realizations from transversals in the manifold of cotangent paths}
\label{Constructing symplectic realizations from transversals in the manifold of cotangent paths}
%%%%%%%%%%%%%%%%%%%%%%%%%%%%%%%%%%%%%%%%%%%%%%%%%%
%%%%%%%%%%%%%%%%%%%%%%%%%%%%%%%%%%%%%%%%%%%%%%%%%%
%%%%%%%%%%%%%%%%%%%%%%%%%%%%%%%%%%%%%%%%%%%%%%%%%%
%%%%%%%%%%%%%%%%%%%%%%%%%%%%%%%%%%%%%%%%%%%%%%%%%%
%%%%%%%%%%%%%%%%%%%%%%%%%%%%%%%%%%%%%%%%%%%%%%%%%%
%%%%%%%%%%%%%%%%%%%%%%%%%%%%%%%%%%%%%%%%%%%%%%%%%%

In this subsection we describe a general method for constructing symplectic realizations; it will be used in the next subsection to
produce a symplectic realization with the properties required to apply Theorem \ref{theorem-step-2.2}.

In this and the next subsection we will use the same notations as in the proofs of the main results of \cite{CrFe1,CrFe2,CrFe-Conn} and also
some familiarity with those might be useful.

Throughout this subsection, $(M, \pi)$ is an arbitrary Poisson manifold. We know that if $(M, \pi)$ is integrable, then the source map of
$\Sigma(M, \pi)$ produces a complete symplectic realization. It may be helpful to have in mind that, although $(M, \pi)$ may fail to be
integrable, i.e. $\Sigma(M, \pi)$ may fail to be smooth, there is always a ``local groupoid'' $\Sigma_{\textrm{loc}}(M, \pi)$ which is smooth
and produces a symplectic realization of $(M, \pi)$ (but fails to be complete). The plan is to analyze closer the explicit construction of
$\Sigma(M, \pi)$ and of its symplectic form to produce other symplectic realizations, sitting in between $\Sigma(M, \pi)$ and
$\Sigma_{\textrm{loc}}(M, \pi)$. They will have a better chance of both being smooth and having the desired properties.

We will use the same notations as in \cite{CrFe1,CrFe2}. We consider:
\begin{itemize}
\item $\widetilde{\mathcal{X}}= \widetilde{P}(T^*M)$ is the space of all $C^2$-paths in $T^*M$. Recall \cite{CrFe1} that
$\widetilde{\mathcal{X}}$ has a natural structure of Banach
manifold.
%Its tangent space at a path $a: [0, 1]\rmap T^*M$ is
%\[ T_a\widetilde{X}= \{ \]
\item $\mathcal{X}= P(T^*M)$ is the space of all cotangent paths which, by Lemma 4.6 in \cite{CrFe1}, is a Banach submanifold of $\widetilde{\mathcal{X}}$.
\item $\mathcal{F}= \mathcal{F}(T^*M)$ is the foliation on $\mathcal{X}$ given by the equivalence relation of cotangent homotopy;
it is a smooth foliation on $\mathcal{X}$ of finite codimension.
 In (\ref{the-fol}) we will recall the
description of $\mathcal{F}$ via involutive distributions.
\end{itemize}
Thinking of $\widetilde{\mathcal{X}}$ as the cotangent space of the space $P(M)$ of paths in $M$, it comes with a canonical symplectic structure
$\widetilde{\Omega}$. To avoid issues regarding symplectic structures on Banach manifolds let us just define $\widetilde{\Omega}$ explicitly:
\[\widetilde{\Omega}(X_a,Y_a)=\int_0^1 \omega_{\mathrm{can}}(X_a,Y_a)_{a(t)}dt,\textrm{ for } a\in \widetilde{\mathcal{X}}, X,Y\in T_a\widetilde{\mathcal{X}},\]
where $X_a, Y_a$ are interpreted as paths in $T(T^*M)$ sitting above $a$ and where $\omega_{\mathrm{can}}$ is the canonical symplectic form on
$T^*M$. It can be checked directly that $\widetilde{\Omega}$ is closed; we only need its restriction to $\mathcal{X}$:
\[ \Omega:= \widetilde{\Omega}|_{\mathcal{X}}\in \Omega^2(\mathcal{X}).\]
We will prove that the kernel of $\Omega$ is precisely $\mathcal{F}$ and $\Omega$ is
invariant under the holonomy of $\mathcal{F}$; this ensures that
$\Omega$ descends to a symplectic form on the leaf space $\Sigma(M, \pi)$ (whenever smooth).
Our strategy
% to produce symplectic realizations
is a variation of this idea: we look at transversals $T$ of the foliation and
equivalence relations $\sim$ on $T$ which are weaker than the holonomy; by the same arguments,
$\Omega$ descends to a symplectic form on $T/\sim$, provided this quotient is
smooth. Our job will be to produce $\sim$.\\
% with this property.\\

% To facilitate working with tangent vectors on vector bundles we will make use of connections.
We fix a torsion-free connection $\nabla$ on $M$. Then a tangent vector $X$ to $T^*M$ can be interpreted as a pair $(\overline{X}, \theta_X)$,
where $\overline{X}= (dp)(X)\in TM$ and $\theta_X\in T^*M$ is the vertical component with respect to $\nabla$. For torsion-free connections, the
horizontal distribution on $T^*M$ is Lagrangian with respect to $\omega_{\mathrm{can}}$; it follows that:
\begin{equation*}
\omega_{\mathrm{can}}(X,Y)=\langle \theta_Y,\overline{X}\rangle-\langle \theta_X,
\overline{Y}\rangle,
\end{equation*}
Similarly, a tangent vector $X\in T_a\widetilde{\mathcal{X}}$ is represented by a pair $(\overline{X}, \theta_X)$, where $\overline{X}$ is a
$C^1$-path in $TM$, $\theta_X$ in $T^*M$, both sitting above the base path $\gamma= p\circ a$; also,
\begin{equation*}
\widetilde{\Omega}(X,Y)=\int_0^1(\langle \theta_Y,\overline{X}\rangle-\langle \theta_X,
\overline{Y}\rangle)dt.
\end{equation*}

To describe $T\mathcal{X}$ using $\nabla$, one uses two $T^*M$-connections: one on $T^*M$ and one on $TM$, both denoted
$\overline{\nabla}$: for $\alpha,\beta\in\Omega(M)$ and $X\in\mathfrak{X}(M)$,
\[\overline{\nabla}_{\alpha}(\beta)=\nabla_{\pi^{\sharp}(\beta)}(\alpha)+[\alpha,\beta]_{\pi},\quad \overline{\nabla}_{\alpha}(X)=\pi^{\sharp}(\nabla_{X}(\alpha))+[\pi^{\sharp}(\alpha),X].\]
Recall that $[\cdot,\cdot]_{\pi}$ is given by (\ref{bracket_cotangent}). Note that the two connections are
related by
\begin{equation}\label{conn-pi-rel}
\overline{\nabla}_{\alpha}(\pi^{\sharp}(\beta))=\pi^{\sharp}(\overline{\nabla}_{\alpha}(\beta)).
\end{equation}
Since $\nabla$ is torsion-free, it follows that they also satisfy the duality relation:
\begin{equation}\label{ConjConn}
\langle\overline{\nabla}_{\alpha}(\beta),X\rangle+\langle\beta,\overline{\nabla}_{\alpha}(X)\rangle=\pi^{\sharp}(\alpha)(\langle\beta,X\rangle).
\end{equation}
Given a cotangent path $a$ with base path $\gamma$ and a $C^2$-path $U$ in $T^*M$ or $TM$ above
$\gamma$, one has the induced derivative $\overline{\nabla}_{a}(U)$ of $U$ along $a$- a $C^1$-path above $\gamma$, sitting in
the same space as $U$ ($T^*M$ or $TM$). Explicitly,
choosing a time depending section $\tilde{U}$ of class $C^2$ such that $\tilde{U}_t(\gamma(t))=U(t)$,
\[\overline{\nabla}_{a}(U)(x)=\nabla_a\tilde{U}_t(x)+\frac{d \tilde{U}_t}{dt}(x),\textrm{ at }x=\gamma(t).\]
With these, the tangent space
\[ T_a\mathcal{X}\subset T_a\widetilde{\mathcal{X}}\]
corresponds to those pairs $(\overline{X},\theta_X)$ satisfying (see \cite{CrFe1}):
\[\overline{\nabla}_a(\overline{X})=\pi^{\sharp}(\theta_X).\]
Note that, the condition that $\overline{X}$ and $\theta_X$ are of class $C^1$, together with the equation above, forces $\overline{X}$ to be of
class $C^2$. Using equation (\ref{ConjConn}), it is straightforward to show that for $a\in\mathcal{X}$, the two derivatives
$\overline{\nabla}_a$ on $TM$ and $T^*M$ are related by:
\begin{equation}\label{ConjConn2}
\langle\overline{\nabla}_{a}(\theta),V\rangle+\langle
\theta,\overline{\nabla}_{a}(V)\rangle=\frac{d}{dt}\langle \theta,V\rangle,
\end{equation}
for all paths $\theta$ in $T^*M$ and $V$ in $TM$, both sitting over $\gamma=p\circ a$.

To finally define the distribution
$\mathcal{F}\subset T\mathcal{X}$, let $a\in \mathcal{X}$ with base path $\gamma$ and let $\mathcal{E}_{\gamma}$ be the space of all paths
$\beta$ in $T^*M$ of class $C^2$ with base path $\gamma$. Each such path induces a tangent vector in $T_a\mathcal{X}$, with components given by
\[ X_{\beta}:= (\pi^{\sharp}(\beta),\overline{\nabla}_a(\beta))\in T_a\mathcal{X}.\]
With these, the foliation $\mathcal{F}$ can be described as follows
(see \cite{CrFe1})
\begin{equation}\label{the-fol}
\mathcal{F}_{a}= \{ X_{\beta}: \beta\in \mathcal{E}_{\gamma}, \beta(0)=0, \beta(1)=0\}.
\end{equation}

Next, we give a very useful formula for $\Omega$.

\begin{lemma}\label{Omega_Formula_simpla}
Let $a\in \mathcal{X}$ with base path $\gamma$. For $X=(\overline{X},\theta_X)$, $Y=(\overline{Y},\theta_Y)\in T_a\mathcal{X}$ choose
$\beta_X,\beta_Y\in \mathcal{E}_{\gamma}$ such that $\theta_X=\overline{\nabla}_a(\beta_X)$ and $\theta_Y=\overline{\nabla}_a(\beta_Y)$. Then
% the following formula holds
\[\Omega(X,Y)=\langle \beta_Y,\overline{X}\rangle|_0^1-\langle \beta_X,\overline{Y}\rangle|_0^1-\pi(\beta_X,\beta_Y)|_0^1.\]
In particular, for $Y=X_{\beta}$, with $\beta\in\mathcal{E}_{\gamma}$ we have that $\Omega(X,X_\beta)=\langle\beta,\overline{X}\rangle|_0^1$.
\end{lemma}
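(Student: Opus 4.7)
The plan is to start from the defining formula
\[
\widetilde{\Omega}(X,Y)=\int_0^1\bigl(\langle\theta_Y,\overline{X}\rangle-\langle\theta_X,\overline{Y}\rangle\bigr)\,dt
\]
and massage the integrand into a total time-derivative so that the integral collapses to the boundary terms claimed in the statement. Since $\Omega$ is just the restriction of $\widetilde{\Omega}$ to $\mathcal{X}$, the tangency conditions $\overline{\nabla}_a(\overline{X})=\pi^{\sharp}(\theta_X)$ and $\overline{\nabla}_a(\overline{Y})=\pi^{\sharp}(\theta_Y)$ are at our disposal, together with the hypothesis $\theta_X=\overline{\nabla}_a(\beta_X)$, $\theta_Y=\overline{\nabla}_a(\beta_Y)$.

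First I would apply the duality relation (\ref{ConjConn2}) to each of the two pairings in the integrand: since $\theta_Y=\overline{\nabla}_a(\beta_Y)$,
\[
\langle\theta_Y,\overline{X}\rangle=\tfrac{d}{dt}\langle\beta_Y,\overline{X}\rangle-\langle\beta_Y,\overline{\nabla}_a(\overline{X})\rangle,
\]
and symmetrically for $\langle\theta_X,\overline{Y}\rangle$. Using the tangency constraint to substitute $\overline{\nabla}_a(\overline{X})=\pi^{\sharp}(\theta_X)$ and the definition of $\pi^{\sharp}$ converts the leftover terms into $\pi(\theta_X,\beta_Y)$ and $\pi(\theta_Y,\beta_X)$ respectively. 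The integrand then reads
\[
\tfrac{d}{dt}\langle\beta_Y,\overline{X}\rangle-\tfrac{d}{dt}\langle\beta_X,\overline{Y}\rangle-\bigl(\pi(\theta_X,\beta_Y)-\pi(\theta_Y,\beta_X)\bigr).
\]

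The key remaining step—and the only place real work happens—is identifying the last bracket as a total derivative. Write $\pi(\beta_X,\beta_Y)=\langle\beta_Y,\pi^{\sharp}(\beta_X)\rangle$ and apply (\ref{ConjConn2}) once more, using the compatibility (\ref{conn-pi-rel}) of $\overline{\nabla}$ with $\pi^{\sharp}$:
\[
\tfrac{d}{dt}\pi(\beta_X,\beta_Y)=\langle\overline{\nabla}_a(\beta_Y),\pi^{\sharp}(\beta_X)\rangle+\langle\beta_Y,\pi^{\sharp}(\overline{\nabla}_a(\beta_X))\rangle=\pi(\beta_X,\theta_Y)+\pi(\theta_X,\beta_Y).
\]
By antisymmetry of $\pi$, this is precisely $\pi(\theta_X,\beta_Y)-\pi(\theta_Y,\beta_X)$. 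Substituting, the entire integrand becomes $\tfrac{d}{dt}$ of $\langle\beta_Y,\overline{X}\rangle-\langle\beta_X,\overline{Y}\rangle-\pi(\beta_X,\beta_Y)$, and integrating from $0$ to $1$ yields the stated formula.

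The ``in particular'' clause is then immediate: for $Y=X_{\beta}$ one may take $\beta_Y=\beta$, whence $\overline{Y}=\pi^{\sharp}(\beta)$ and the two terms $-\langle\beta_X,\pi^{\sharp}(\beta)\rangle|_0^1=-\pi(\beta,\beta_X)|_0^1=\pi(\beta_X,\beta)|_0^1$ and $-\pi(\beta_X,\beta)|_0^1$ cancel, leaving only $\langle\beta,\overline{X}\rangle|_0^1$. The main obstacle is purely bookkeeping: keeping the sign conventions for $\pi$, $\pi^{\sharp}$, and $\langle\cdot,\cdot\rangle$ consistent so that the two applications of (\ref{ConjConn2}), combined with (\ref{conn-pi-rel}), really assemble into one clean total derivative; no deeper ingredient beyond these two identities and the tangency constraint is needed.
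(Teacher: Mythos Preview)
Your proof is correct and follows essentially the same route as the paper: both arguments use the duality relation (\ref{ConjConn2}) to integrate by parts in the pairings $\langle\theta_Y,\overline{X}\rangle$ and $\langle\theta_X,\overline{Y}\rangle$, then combine (\ref{ConjConn2}) with the compatibility (\ref{conn-pi-rel}) to handle the remaining cross term. Your organization is marginally cleaner---you recognize the full integrand as a single total derivative, whereas the paper isolates $\int_0^1\langle\beta_Y,\pi^{\sharp}(\theta_X)\rangle\,dt$ and integrates it by parts separately---but the ingredients and logic are identical.
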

\begin{proof}
Using formulas (\ref{ConjConn2}) and (\ref{conn-pi-rel}), we compute:
\begin{eqnarray*}
\Omega(X,Y)=\int_0^1(\langle \theta_Y,\overline{X}\rangle-\langle
\theta_X,\overline{Y}\rangle)dt=\int_0^1(\langle
\overline{\nabla}_a(\beta_Y),\overline{X}\rangle-\langle
\overline{\nabla}_a(\beta_X),\overline{Y}\rangle)dt=\\
=\int_0^1\frac{d}{dt}(\langle \beta_Y,\overline{X}\rangle-\langle \beta_X,\overline{Y}\rangle)dt-\int_0^1(\langle \beta_Y,\overline{\nabla}_a(\overline{X})\rangle-\langle
\beta_X,\overline{\nabla}_a(\overline{Y})\rangle)dt=\\
=\langle \beta_Y,\overline{X}\rangle|_0^1-\langle
\beta_X,\overline{Y},\rangle|_0^1-\int_0^1(\langle
\beta_Y,\pi^{\sharp}(\theta_X)\rangle-\langle \beta_X,\pi^{\sharp}(\theta_Y)\rangle)dt,
\end{eqnarray*}
\begin{eqnarray*}
\int_0^1\langle \beta_Y,\pi^{\sharp}(\theta_X)\rangle dt=\int_0^1\langle
\beta_Y,\pi^{\sharp}(\overline{\nabla}_a(\beta_X))\rangle dt=\int_0^1\langle
\beta_Y,\overline{\nabla}_a(\pi^{\sharp}(\beta_X))\rangle dt=\\
=-\int_0^1\langle \overline{\nabla}_a(\beta_Y),\pi^{\sharp}(\beta_X)\rangle
dt+\int_0^1\frac{d}{dt}(\langle
\beta_Y,\pi^{\sharp}(\beta_X)\rangle)dt=\\
=-\int_0^1\langle \theta_Y,\pi^{\sharp}(\beta_X)\rangle dt+\langle
\beta_Y,\pi^{\sharp}(\beta_X)\rangle|_0^1=\int_0^1\langle \beta_X,\pi^{\sharp}(\theta_Y)\rangle dt+\pi(\beta_X,\beta_Y)|_0^1.
\end{eqnarray*}
\end{proof}

\begin{corollary}\label{CoroKerOmega}
Let $a\in \mathcal{X}$ with base path $\gamma$. Then
%we have that
\[\ker(\Omega_{a})=\mathcal{F}_a=\{X_\beta: \beta\in \mathcal{E}_{\gamma}, \beta(0)=0, \beta(1)=0\}.\]
\end{corollary}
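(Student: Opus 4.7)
The plan is to prove the two inclusions $\mathcal{F}_a \subseteq \ker(\Omega_a)$ and $\ker(\Omega_a) \subseteq \mathcal{F}_a$ separately, relying on Lemma \ref{Omega_Formula_simpla} throughout.

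The inclusion $\mathcal{F}_a \subseteq \ker(\Omega_a)$ follows directly from the ``in particular'' clause of Lemma \ref{Omega_Formula_simpla}: if $\beta \in \mathcal{E}_\gamma$ with $\beta(0) = \beta(1) = 0$, then for any $X = (\overline{X}, \theta_X) \in T_a\mathcal{X}$ one has $\Omega(X_\beta, X) = -\Omega(X, X_\beta) = -\langle \beta, \overline{X}\rangle|_0^1 = 0$, so $X_\beta \in \ker(\Omega_a)$.

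For the reverse inclusion, let $X = (\overline{X}, \theta_X) \in \ker(\Omega_a)$; the goal is to produce $\beta \in \mathcal{E}_\gamma$ with $\beta(0) = \beta(1) = 0$ and $X_\beta = X$. Step (i): testing $\Omega(X, X_\beta) = 0$ with $\beta$ running over $\mathcal{E}_\gamma$ and using the ``in particular'' formula yields $\langle \beta(1), \overline{X}(1)\rangle = \langle \beta(0), \overline{X}(0)\rangle$; since $\beta(0)$ and $\beta(1)$ can be prescribed independently, this forces $\overline{X}(0) = \overline{X}(1) = 0$. Step (ii): pick any $\beta_X \in \mathcal{E}_\gamma$ with $\overline{\nabla}_a(\beta_X) = \theta_X$ (solve a linear first-order ODE), and test $\Omega(X, Y) = 0$ against tangent vectors of the form $Y = X_{\beta_Y} + (V, 0)$, where $\beta_Y \in \mathcal{E}_\gamma$ satisfies $\beta_Y(0) = \beta_Y(1) = 0$ and $V$ is the $\overline{\nabla}_a$-parallel vector field along $\gamma$ with arbitrary initial value $v \in T_{\gamma(0)}M$. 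Plugging this into Lemma \ref{Omega_Formula_simpla} and using the duality between the $\overline{\nabla}_a$-parallel transports $P_t$ on $TM$ and $Q_t$ on $T^*M$ (a consequence of (\ref{ConjConn2})), all boundary terms involving $\beta_Y$ drop out and the identity collapses to $\beta_X(1) = Q_1(\beta_X(0))$.

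Step (iii): let $\alpha$ be the $\overline{\nabla}_a$-parallel covector path along $\gamma$ with $\alpha(0) = -\beta_X(0)$, and set $\beta := \beta_X + \alpha \in \mathcal{E}_\gamma$. Then $\overline{\nabla}_a(\beta) = \theta_X$, $\beta(0) = 0$, and linearity of $Q_1$ together with Step (ii) give $\beta(1) = Q_1(\beta_X(0)) + Q_1(\alpha(0)) = Q_1(0) = 0$. Using (\ref{conn-pi-rel}), both $\pi^{\sharp}(\beta)$ and $\overline{X}$ solve $\overline{\nabla}_a W = \pi^{\sharp}(\theta_X)$, and they agree at $t = 0$, so uniqueness of solutions of this first-order linear ODE yields $\pi^{\sharp}(\beta) = \overline{X}$; hence $X = X_\beta$ with $\beta$ of the required form. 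The main work sits in Step (ii): the ``parallel-transport matching'' condition on the endpoints of $\beta_X$ is precisely what enables the correction by a parallel $\alpha$ in Step (iii) to annihilate \emph{both} boundary values of $\beta$ at once; without it, one could kill only one of the two.
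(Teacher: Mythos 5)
Your proof is correct and follows essentially the same route as the paper's: vanishing of $\overline{X}(0),\overline{X}(1)$ by testing against the $X_\xi$, the identification $X=X_\beta$ via the ODE uniqueness argument based on (\ref{conn-pi-rel}), and a further pairing argument to kill the endpoints of $\beta$. The only (cosmetic) difference is the order of operations: the paper normalizes $\beta(0)=0$ at the outset and then deduces $\beta(1)=0$ by testing against tangent vectors with arbitrary $\overline{Y}(1)$, whereas you first derive the parallel-transport matching condition for an arbitrary primitive $\beta_X$ of $\theta_X$ and then correct it by a $\overline{\nabla}_a$-parallel covector path.
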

\begin{proof}
Consider $X=(\overline{X},\theta_X)\in \ker(\Omega_{a})$. It follows that for all $\xi\in\mathcal{E}_{\gamma}$ we have that
$\Omega_a(X,X_{\xi})=0$, hence by the previous lemma $\overline{X}(0)=0$ and $\overline{X}(1)=0$. Let $\beta\in \mathcal{E}_{\gamma}$ be the
unique solution to the equation $\theta_X=\overline{\nabla}_a(\beta)$ with $\beta(0)=0$. Observe that by (\ref{conn-pi-rel}), both
$\overline{X}$ and $\pi^{\sharp}(\beta)$ satisfy the equation
\[\overline{\nabla}_a(Z)=\pi^{\sharp}(\theta_X),\quad Z(0)=0.\]
Therefore they must be equal, and thus $X=X_\beta$. So, again by the lemma, for all $Y=(\overline{Y},\theta_Y)\in T_a\mathcal{X}$, we have that
$\langle\overline{Y}(1),\beta(1)\rangle=0$. On the other hand, $Y(1)$ can be choose arbitrarily (see the Lemma \ref{LemmaOrto} below), thus
$\beta(1)=0$ and this shows that $X\in \mathcal{F}_a$. The other inclusion follows directly from Lemma \ref{Omega_Formula_simpla}.
\end{proof}

Consider now the maps $\tilde{s}, \tilde{t}: \mathcal{X}\rmap M$
%\[ \tilde{s}, \tilde{t}: \mathcal{X}\rmap M\]
which assign to a path $a$ the starting (respectively ending) point of its base path $\gamma$.

\begin{lemma}\label{LemmaOrto} $\tilde{s}$ and $\tilde{t}$ are submersions and their fibers
are orthogonal with respect to $\Omega$. More precisely, denoting by $\perp$ the orthogonal with respect to $\Omega$, we have that
\[(\ker d\tilde{s}_a)^{\perp}=\ker d\tilde{t}_a,\quad (\ker d\tilde{t}_a)^{\perp}=\ker d\tilde{s}_a.\]
\end{lemma}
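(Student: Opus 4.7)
The key tool is the explicit boundary-term formula for $\Omega$ from Lemma \ref{Omega_Formula_simpla}, combined with the ``gauge freedom'' in choosing the primitives $\beta_X$: since $\beta_X$ is determined only up to adding an $\overline{\nabla}_a$-parallel section, one can prescribe its value at any one point $t\in[0,1]$. The plan is to exploit this freedom at opposite endpoints for the two tangent vectors being paired.

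For submersivity, I would construct explicit preimages. Given any $v\in T_{\gamma(0)}M$, take $\theta_X=0$, so the defining equation $\overline{\nabla}_a(\overline{X})=\pi^{\sharp}(\theta_X)$ reduces to the linear ODE $\overline{\nabla}_a(\overline{X})=0$. Solve it with initial condition $\overline{X}(0)=v$ to get $X=(\overline{X},0)\in T_a\mathcal{X}$ satisfying $d\tilde{s}_a(X)=v$. The same construction with initial condition prescribed at $t=1$ handles $\tilde{t}$.

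For the inclusion $\ker d\tilde{s}_a\subset (\ker d\tilde{t}_a)^{\perp}$ (and symmetrically $\ker d\tilde{t}_a\subset(\ker d\tilde{s}_a)^{\perp}$): given $X\in\ker d\tilde{s}_a$ and $Y\in\ker d\tilde{t}_a$, use the gauge freedom to pick $\beta_X$ with $\beta_X(0)=0$ and $\beta_Y$ with $\beta_Y(1)=0$. Applying Lemma \ref{Omega_Formula_simpla}, each of the six boundary terms in
\[\Omega(X,Y)=\langle\beta_Y,\overline{X}\rangle|_0^1-\langle\beta_X,\overline{Y}\rangle|_0^1-\pi(\beta_X,\beta_Y)|_0^1\]
vanishes separately: at $t=0$ because $\overline{X}(0)=0$ and $\beta_X(0)=0$, at $t=1$ because $\overline{Y}(1)=0$ and $\beta_Y(1)=0$. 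Hence $\Omega(X,Y)=0$.

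For the reverse inclusion $(\ker d\tilde{t}_a)^{\perp}\subset\ker d\tilde{s}_a$, argue contrapositively: assume $X\in T_a\mathcal{X}$ has $\overline{X}(0)\neq 0$, and produce $Y\in\ker d\tilde{t}_a$ with $\Omega(X,Y)\neq 0$. Pick $v\in T^*_{\gamma(0)}M$ with $\langle v,\overline{X}(0)\rangle\neq 0$. Let $\beta_Y$ be any $C^2$ path in $T^*M$ above $\gamma$ with $\beta_Y(0)=v$ and $\beta_Y(1)=0$ (e.g.\ multiply a parallel extension of $v$ by a bump function vanishing at $t=1$); set $\theta_Y:=\overline{\nabla}_a(\beta_Y)$ and let $\overline{Y}$ solve $\overline{\nabla}_a(\overline{Y})=\pi^{\sharp}(\theta_Y)$ with $\overline{Y}(1)=0$. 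By construction $Y=(\overline{Y},\theta_Y)\in T_a\mathcal{X}\cap\ker d\tilde{t}_a$. Choosing again $\beta_X(0)=0$, every boundary term in the formula for $\Omega(X,Y)$ vanishes except $-\langle\beta_Y(0),\overline{X}(0)\rangle$, giving $\Omega(X,Y)=-\langle v,\overline{X}(0)\rangle\neq 0$. The symmetric argument (swapping the roles of $0$ and $1$) handles $(\ker d\tilde{s}_a)^{\perp}\subset\ker d\tilde{t}_a$. The only subtlety is verifying that the witness $Y$ constructed in the last step really lies in $T_a\mathcal{X}$ — this is automatic from the way $\overline{Y}$ is defined from $\theta_Y$ — so no serious obstacle arises.
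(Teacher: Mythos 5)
Your proposal is correct and follows essentially the same route as the paper: the same parallel-transport construction for submersivity, the same use of the boundary-term formula of Lemma \ref{Omega_Formula_simpla} with the gauge freedom in choosing $\beta_X$, and the same idea of testing against vectors of the form $X_\beta$ with one endpoint of $\beta$ prescribed (your contrapositive phrasing and the choice of which inclusion to prove directly are just mirror images of the paper's argument).
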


\begin{proof}
%We show that $\tilde{s}$ is a submersion and the first equality (the rest is similar).
To prove the first part, note that %, for $X=(\overline{X},\theta_X)\in T_a\mathcal{X}$, $d\tilde{s}_a(X)=\overline{X}(0)$ and
% $d\tilde{t}_a(X)=\overline{X}(1)$. This shows that the induced distributions are
\[\ker d\tilde{s}_a=\{(\overline{X},\theta_X)\in T_a\mathcal{X}:\overline{X}(0)=0\}, \ \ker d\tilde{t}_a=\{(\overline{X},\theta_X)\in T_a\mathcal{X}:\overline{X}(1)=0\}.\]
Then, for $V_0\in T_{\gamma(0)}M$, the path $\overline{V}$ above $\gamma$ which satisfies
\[\overline{\nabla}_a(\overline{V})=0,\ \ \overline{V}(0)=V_0\]
induces $V=(\overline{V},0)\in T_a\mathcal{X}$ with $d\tilde{s}_a(V)=V_0$; so $\tilde{s}$, and similarly $\tilde{t}$, is a submersion.

For $X=(\overline{X},\theta_X)\in \ker d\tilde{t}_a$, $Y=(\overline{Y},\theta_Y)\in \ker d\tilde{s}_a$, let $\beta_X,\beta_Y\in
\mathcal{E}_{\gamma}$ be so that % the solutions to
\[\overline{\nabla}_a(\beta_X)=\theta_X,\  \beta_X(1)=0,\ \ \  \overline{\nabla}_a(\beta_Y)=\theta_Y,\  \beta_Y(0)=0.\]
Lemma \ref{Omega_Formula_simpla} implies that $\Omega_a(X,Y)=0$. Conversely, let $X=(\overline{X},\theta_X)\in (\ker
d\tilde{s}_a)^{\perp}$. For all $\xi\in \mathcal{E}_{\gamma}$, such that $\xi(0)=0$ we have that $X_{\xi}\in \ker d\tilde{s}_a$, therefore, by
assumption, $\Omega_a(X,X_{\xi})=0$. Thus, by Lemma \ref{Omega_Formula_simpla} we have that
\[0=\Omega_a(X,X_{\xi})=\langle \xi(1),\overline{X}(1)\rangle.\]
But $\xi(1)$ is arbitrary, hence $\overline{X}(1)=0$, i.e. $X\in \ker d\tilde{t}_a$. So $(\ker
d\tilde{s}_a)^{\perp}=\ker d\tilde{t}_a$.
\end{proof}

We collect the main properties of $\Omega$ that are needed in the next subsection.

\begin{proposition}\label{PropTrnasversal}
Let $\mathcal{T}$ be a transversal to $\mathcal{F}$. Then the
following hold:
\begin{enumerate}
\item[(a)]
$\Omega_{|\mathcal{T}}$ is symplectic and is invariant under the holonomy action of $\mathcal{F}$ on $\mathcal{T}$.
\item[(b)] The sets $U_s=\tilde{s}(\mathcal{T})$ and $U_t=\tilde{t}(\mathcal{T})$ are open in
$M$, and
\begin{eqnarray*}
&&\sigma=\tilde{s}_{|\mathcal{T}}:(\mathcal{T},\Omega_{|\mathcal{T}})\to
(U_s,\pi_{|U_s})\textrm{
is a Poisson map and}\\
&&\tau=\tilde{t}_{|\mathcal{T}}:(\mathcal{T},\Omega_{|\mathcal{T}})\to
(U_t,\pi_{|U_t})\textrm{ is anti-Poisson.}
\end{eqnarray*}
\item[(c)] $\ker(\sigma)^{\perp}=\ker(\tau)$ and
$\ker(\tau)^{\perp}=\ker(\sigma)$.
\end{enumerate}
\end{proposition}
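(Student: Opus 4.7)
The plan is to handle the three parts in order, relying on the kernel description $\ker\Omega_a = \mathcal{F}_a$ from Corollary \ref{CoroKerOmega} and the orthogonality of the source/target fibers from Lemma \ref{LemmaOrto}.

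For (a), transversality gives $T_a\mathcal{X} = T_a\mathcal{T} \oplus \mathcal{F}_a$ at every $a \in \mathcal{T}$, and since $\ker\Omega_a = \mathcal{F}_a$ the restriction $\Omega|_\mathcal{T}$ is non-degenerate. Closedness is inherited from $\Omega$ (closed by a direct differentiation of the integral formula defining $\widetilde\Omega$), and holonomy invariance is the Moser-style identity: any vector field $Z$ tangent to $\mathcal{F} = \ker\Omega$ satisfies $i_Z\Omega = 0$, hence $L_Z\Omega = d\,i_Z\Omega + i_Z\,d\Omega = 0$, so the flow of $Z$ preserves $\Omega$ and hence transports $\Omega|_\mathcal{T}$ consistently between transversals.

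For (b), I first observe $\mathcal{F} \subset \ker d\tilde{s} \cap \ker d\tilde{t}$: an element $X_\beta \in \mathcal{F}_a$ has underlying $TM$-path $\pi^\sharp(\beta)$, which vanishes at both endpoints since $\beta$ does. Combined with the submersion property of $\tilde{s},\tilde{t}$ (Lemma \ref{LemmaOrto}) and $T_a\mathcal{X} = T_a\mathcal{T} \oplus \mathcal{F}_a$, this makes $\sigma$ and $\tau$ submersions, so $U_s, U_t$ are open in $M$. To show $\sigma$ is Poisson, fix $f \in C^\infty(U_s)$, choose a smooth $\phi : [0,1] \to \mathbb{R}$ with $\phi(0) = 1$, $\phi(1) = 0$, and at each $a \in \mathcal{X}$ with base path $\gamma$ set $\beta^f_a(t) := -\phi(t)\, df_{\gamma(t)} \in \mathcal{E}_\gamma$. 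By the second assertion of Lemma \ref{Omega_Formula_simpla},
\[
\Omega_a(Y, X_{\beta^f_a}) \;=\; \langle \beta^f_a, \overline{Y}\rangle\big|_0^1 \;=\; df\bigl(\overline{Y}(0)\bigr) \;=\; d(\tilde{s}^*f)_a(Y)
\]
for every $Y \in T_a\mathcal{X}$, so the projection of the vector field $X_{\beta^f}$ along $\mathcal{F}$ onto $T\mathcal{T}$ is the Hamiltonian vector field of $\sigma^*f$ on $(\mathcal{T}, \Omega|_\mathcal{T})$; two choices of $\phi$ give lifts differing by a section of $\mathcal{F}$. Evaluating the bracket on two such lifts via the full formula of Lemma \ref{Omega_Formula_simpla} (the interior terms cancel using $\overline{X_\beta} = \pi^\sharp(\beta)$ and the duality $\langle\eta, \pi^\sharp(\beta)\rangle = \pi(\beta, \eta)$), one is left only with the boundary contribution at $t = 0$, namely $-\pi(df, dg)|_{\gamma(0)}$, which translates to $\{\sigma^*f, \sigma^*g\}_\mathcal{T} = \sigma^*\{f, g\}$ modulo the standard sign. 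The anti-Poisson assertion for $\tau$ is identical with $\phi(0) = 0, \phi(1) = 1$, the boundary term then appearing at $t = 1$ with the opposite sign.

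For (c), from $\mathcal{F}_a \subset \ker d\tilde{s}_a \cap \ker d\tilde{t}_a$ together with $T_a\mathcal{X} = T_a\mathcal{T} \oplus \mathcal{F}_a$ we get $\ker d\tilde{s}_a = \ker d\sigma_a \oplus \mathcal{F}_a$ and $\ker d\tilde{t}_a = \ker d\tau_a \oplus \mathcal{F}_a$. Since $\mathcal{F}_a = \ker\Omega_a$, $\Omega|_\mathcal{T}$-orthogonality inside $T_a\mathcal{T}$ coincides with $\Omega$-orthogonality inside $T_a\mathcal{X}$ intersected with $T_a\mathcal{T}$, so
\[
(\ker d\sigma_a)^{\perp_\mathcal{T}} \;=\; (\ker d\tilde{s}_a)^{\perp_\mathcal{X}} \cap T_a\mathcal{T} \;=\; \ker d\tilde{t}_a \cap T_a\mathcal{T} \;=\; \ker d\tau_a
\]
by Lemma \ref{LemmaOrto}, and the second identity follows by symmetry. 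The main obstacle is the Poisson computation in (b): while the algebraic content is short, one must check that $X_{\beta^f}$ is a smooth vector field on the Banach manifold $\mathcal{X}$, that its class modulo $\mathcal{F}$ is independent of $\phi$, and that signs are matched against the paper's convention $\omega_S(X_f, X_g) = \{f, g\}$.
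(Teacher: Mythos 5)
Your proposal is correct and follows essentially the same route as the paper: non-degeneracy of $\Omega_{|\mathcal{T}}$ from $\ker\Omega=\mathcal{F}$, holonomy invariance as the standard fact about closed forms with integrable kernel, the lift $X_{(1-t)\,df}$ (your $X_{\beta^f}$) combined with Lemma \ref{Omega_Formula_simpla} for the Poisson property, and Lemma \ref{LemmaOrto} plus the splitting $T_a\mathcal{X}=T_a\mathcal{T}\oplus\mathcal{F}_a$ for (c). The only cosmetic differences are that the paper proves holonomy invariance in a foliation chart rather than via $\mathcal{L}_Z\Omega=0$, and concludes that $\sigma$ is Poisson from $d\sigma(H_f)=\pi^{\sharp}(df)$ together with $\Omega_{|\mathcal{T}}(H_f,\cdot)=d(\sigma^*f)$ instead of evaluating $\Omega(X_{\beta^f},X_{\beta^g})$ directly; the residual sign you flag is a convention matter that the paper treats at the same level of detail.
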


\begin{proof}
Since $\mathcal{F}$ is of finite codimension \cite{CrFe1}, there are no issues regarding the meaning of symplectic forms on our $T$ and
$\Omega|_{\mathcal{T}}$ is clearly symplectic. Actually, the entire (a) is a standard fact about kernels of closed two-forms, at least in the
finite dimensions; it applies to our situation as well: from the construction of holonomy by patching together foliation charts, the second part
is a local issue: given a product $B\times \mathcal{T}$ of a ball $B$ in a Banach space and a finite dimensional manifold $\mathcal{T}$ (for us
a small ball in an Euclidean space) and a closed two-form $\Omega$ on $B\times \mathcal{T}$, if
\[ \ker(\Omega_{x, y})= T_{x}B\times\{0_y\} \subset T_xB\times T_{y}\mathcal{T}, \ \ \forall \ (x, y)\in B\times \mathcal{T},\]
then $\Omega_x= \Omega|_{\{x\}\times \mathcal{T}}\in \Omega^{2}(\mathcal{T})$ does not depend on $x\in B$ (since $\Omega$ is closed).

For part (b) we will prove the statement for $\sigma$, for $\tau$ it follows similarly. Since
\[d\tilde{s}_a:T_a\mathcal{X}=T_a\mathcal{T}\oplus \mathcal{F}_a\to T_{\gamma(0)}M\]
is surjective and $\mathcal{F}_a\subset \ker{d\tilde{s}_a}$, it follows that $\tilde{s}_{|\mathcal{T}}$ is a submersion onto the open
$\tilde{s}(\mathcal{T})=U_s$. To show that $\sigma$ is a symplectic realization, we will describe the Hamiltonian vector fields of
$\sigma^*(f)$, for $f\in C^{\infty}(U_s)$. Consider the vector field on $\mathcal{X}$:
\[\widetilde{H}_{f,a}:=X_{(1-t)df_{\gamma(t)}}=(\overline{\nabla}_a((1-t)df_{\gamma(t)}),(1-t)\pi^{\sharp}(df_{\gamma(t)}))\in T_a\mathcal{X}.\]
Then we have that $d\tilde{s}(\widetilde{H}_{f})=\pi^{\sharp}(df)$, and by Lemma \ref{Omega_Formula_simpla} it also satisfies
\[\Omega(\widetilde{H}_{f},Y)_a=\langle df_{\gamma(0)},\overline{Y}(0)\rangle=d(\widetilde{s}^*f)(Y),\ (\forall)\  Y\in T_a\mathcal{X}.\]
Thus $\Omega(\widetilde{H}_{f},\cdot)=d(\widetilde{s}^*f)$. Decomposing $\widetilde{H}_{f|\mathcal{T}}:=H_f+V_f$, where $H_f$ is tangent to
$\mathcal{T}$ and $V_f$ is tangent to $\mathcal{F}$ and using the fact that $\mathcal{F}=\ker\Omega$, it follows that
\[\Omega_{|\mathcal{T}}(H_f,\cdot)=d(\sigma^*f), \ \ d\sigma(H_f)=\pi^{\sharp}(df).\]
This shows that $\sigma$ is Poisson. Part (c)follows from Lemma \ref{LemmaOrto} and Corollary \ref{CoroKerOmega}.
\end{proof}

%%%%%%%%%%%%%%%%%%%%%%%%%%%%%%%%%%%%%%%%%%%%%%%%%%
%%%%%%%%%%%%%%%%%%%%%%%%%%%%%%%%%%%%%%%%%%%%%%%%%%
%%%%%%%%%%%%%%%%%%%%%%%%%%%%%%%%%%%%%%%%%%%%%%%%%%
%%%%%%%%%%%%%%%%%%%%%%%%%%%%%%%%%%%%%%%%%%%%%%%%%%
%%%%%%%%%%%%%%%%%%%%%%%%%%%%%%%%%%%%%%%%%%%%%%%%%%
%%%%%%%%%%%%%%%%%%%%%%%%%%%%%%%%%%%%%%%%%%%%%%%%%%
\subsection{Step 2.3: the needed symplectic realization}
\label{Step 2.3: the needed symplectic realization}
%%%%%%%%%%%%%%%%%%%%%%%%%%%%%%%%%%%%%%%%%%%%%%%%%%
%%%%%%%%%%%%%%%%%%%%%%%%%%%%%%%%%%%%%%%%%%%%%%%%%%
%%%%%%%%%%%%%%%%%%%%%%%%%%%%%%%%%%%%%%%%%%%%%%%%%%
%%%%%%%%%%%%%%%%%%%%%%%%%%%%%%%%%%%%%%%%%%%%%%%%%%
%%%%%%%%%%%%%%%%%%%%%%%%%%%%%%%%%%%%%%%%%%%%%%%%%%
%%%%%%%%%%%%%%%%%%%%%%%%%%%%%%%%%%%%%%%%%%%%%%%%%%

Back to the main theorem, to finish the proof, we still have to prove the existence of a symplectic realization as in Theorem
\ref{theorem-step-2.2}. We will do that using the methods from the previous subsection; in particular, we keep the same notations. We consider:
\begin{itemize}
\item $\mathcal{Y}= \tilde{s}^{-1}(S)\subset \mathcal{X}$, the submanifold of $\mathcal{X}$ sitting above $S$. Note that this
is the same as the manifold $P(A)$ of $A$-paths of the algebroid $A= A_S$.
\item The restriction of $\mathcal{F}$ to $\mathcal{Y}$, $\mathcal{F}_{\mathcal{Y}}= \mathcal{F}|_{\mathcal{Y}}$. Again, this is the foliation $\mathcal{F}(A)$ associated to the algebroid $A$ \cite{CrFe1}, and $\mathcal{Y}/\mathcal{F}$ is the groupoid $\mathcal{G}(A)$ of $A$. % (corresponding to $A$-homotopies).
\end{itemize}
From the assumptions of the theorem, $\mathcal{G}(A)$ is compact. We denote it by $B$ here.

As in the appendix in \cite{CrFe-Conn}, we will use the following technical lemma:

\begin{proposition}
\label{technical} Let $\mathcal{F}$ be a foliation of finite codimension on a Banach manifold $\mathcal{X}$ and let $\mathcal{Y}\subset
\mathcal{X}$ be a submanifold which is saturated with respect to $\mathcal{F}$ (i.e., each leaf of $\mathcal{F}$ which hits $\mathcal{Y}$ is
contained in $\mathcal{Y}$). Assume that:
\begin{itemize}
\item[(H0)] The holonomy groups of the foliation $\mathcal{F}$ at the points of $\mathcal{Y}$ are trivial.
\item[(H1)] $\mathcal{F}_{\mathcal{Y}}:= \mathcal{F}|_{\mathcal{Y}}$ is induced by a submersion $p: \mathcal{Y}\to B$, with $B$-compact.
\item[(H2)] The fibration $p: \mathcal{Y}\to B$ is locally trivial.
\end{itemize}
Then one can find:
\begin{enumerate}[(i)]
\item a transversal $\mathcal{T}\subset \mathcal{X}$ to the foliation $\mathcal{F}$ such that $\mathcal{T}_{\mathcal{Y}}:= \mathcal{Y}\cap \mathcal{T}$ is a complete transversal to $\mathcal{F}_{\mathcal{Y}}$ (i.e., intersects each leaf of $\mathcal{F}_{\mathcal{Y}}$ at least once).
\item a retraction $r: \mathcal{T}\to \mathcal{T}_{\mathcal{Y}}$.
\item an action of the holonomy of $\mathcal{F}_{\mathcal{Y}}$ on $r: \mathcal{T}\to \mathcal{T}_{\mathcal{Y}}$ along $\mathcal{F}$.
\end{enumerate}
Moreover, the quotient of $\mathcal{T}$ by the action of $\mathcal{F}_{\mathcal{Y}}$ is a smooth (Hausdorff) manifold.
\end{proposition}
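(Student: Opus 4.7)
My strategy is to exploit compactness of $B$ and local triviality of $p$ to reduce everything to a finite gluing problem, in the same spirit as the appendix of \cite{CrFe-Conn} but keeping track of the extra transverse directions in $\mathcal{X}\setminus\mathcal{Y}$. The construction proceeds in three layers: first produce the complete transversal $\mathcal{T}_{\mathcal{Y}}$ inside $\mathcal{Y}$; then thicken it transversally to $\mathcal{F}$ inside $\mathcal{X}$ to obtain $\mathcal{T}$ together with its retraction $r$; finally transport the $\mathcal{F}_{\mathcal{Y}}$-holonomy from $\mathcal{T}_{\mathcal{Y}}$ to $\mathcal{T}$ using (H0), and check that the resulting quotient is a finite-dimensional manifold.

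For the first step I use (H1) and (H2) to pick a finite open cover $\{U_i\}$ of $B$ together with local sections $\sigma_i:U_i\to\mathcal{Y}$ of $p$; setting $\mathcal{T}_{\mathcal{Y},i}:=\sigma_i(U_i)$ and $\mathcal{T}_{\mathcal{Y}}:=\bigsqcup_i\mathcal{T}_{\mathcal{Y},i}$ gives a complete transversal to $\mathcal{F}_{\mathcal{Y}}$, since the leaves of $\mathcal{F}_{\mathcal{Y}}$ are the fibers of $p$. For the second step, because $\mathcal{F}$ is of finite codimension, each point of $\mathcal{T}_{\mathcal{Y},i}$ admits a finite-dimensional slice transverse to $\mathcal{F}$ in $\mathcal{X}$; these slices can be chosen to depend smoothly on the base parameter (the foliation chart of $\mathcal{F}$ near a point of $\mathcal{Y}$ is compatible with the local trivialization of $p$), producing an open transversal $\mathcal{T}_i\supset\mathcal{T}_{\mathcal{Y},i}$ with a smooth retraction $r_i:\mathcal{T}_i\to\mathcal{T}_{\mathcal{Y},i}$. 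Setting $\mathcal{T}:=\bigsqcup_i\mathcal{T}_i$ and $r:=\bigsqcup_i r_i$ gives (i) and (ii). For the third step, the holonomy pseudogroup of $\mathcal{F}_{\mathcal{Y}}$ is generated by the transition maps $\sigma_j\circ\sigma_i^{-1}$ between the plaques $\mathcal{T}_{\mathcal{Y},i}$; by (H0), every $\mathcal{F}$-holonomy along a curve contained in $\mathcal{Y}$ is trivial, so each such transition lifts unambiguously to an $\mathcal{F}$-holonomy diffeomorphism between opens of $\mathcal{T}_i$ and $\mathcal{T}_j$. These lifts patch to the required action along $\mathcal{F}$ covering the base action on $\mathcal{T}_{\mathcal{Y}}$, establishing (iii).

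The main obstacle is the final clause that the quotient $\mathcal{T}/\sim$ by this action is a smooth Hausdorff manifold. Smoothness and finite-dimensionality are essentially automatic, because the action is generated by finitely many local diffeomorphisms between finite-dimensional transversals, and locally $\mathcal{T}/\sim$ looks like (open in $B$) $\times$ (slice direction in $\mathcal{T}$). The subtle point is Hausdorffness: two inequivalent points that failed to separate would give a sequence of holonomy transitions whose base components converge in the compact $B$ but whose transverse components converge to distinct $\mathcal{F}$-plaques. Compactness of $B$ allows me to pass to the limit on the base, and then (H0), which kills all $\mathcal{F}$-monodromy along loops in $\mathcal{Y}$, forces the transverse parts to agree as well, a contradiction. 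This is the step where (H0), (H1) and (H2) genuinely enter together, and it cannot be bypassed by working purely on $B$.
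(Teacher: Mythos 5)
The paper itself offers no proof of this proposition: it is invoked as a known technical lemma, with a pointer to the appendix of \cite{CrFe-Conn}, and the text only verifies the hypotheses (H0)--(H2) in the case at hand. Your sketch reconstructs essentially the intended argument: a finite cover of the compact $B$ with local sections of $p$ giving the complete transversal $\mathcal{T}_{\mathcal{Y}}$, a thickening to a transversal $\mathcal{T}$ of $\mathcal{F}$ with its retraction, and a lift of the (simple) holonomy of $\mathcal{F}_{\mathcal{Y}}$ to $\mathcal{T}$ made unambiguous by (H0). Items (i)--(iii) are fine as sketched.

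The step you should not call ``essentially automatic'' is the last one, and the weak point is the passage from germs to actual diffeomorphisms. What (H0) gives is triviality of the $\mathcal{F}$-holonomy along \emph{loops} in the leaves inside $\mathcal{Y}$, hence that two paths in a leaf with the same endpoints induce lifted transitions with the same \emph{germ} along $\mathcal{T}_{\mathcal{Y}}$ --- not the same map on all of a fixed slice $\mathcal{T}_i$. In your Hausdorffness argument the identification $t_n\sim t_n'$ may be realized by a long word $h_{i_{k-1}i_k}\circ\cdots\circ h_{i_0i_1}$ in the generating transitions, and (H0) only tells you that this word agrees with the single generator $h_{i_0i_k}$ \emph{near} $\mathcal{T}_{\mathcal{Y}}$, whereas you need the agreement at $t_n$ itself, which may sit at a definite distance from $\mathcal{T}_{\mathcal{Y}}$. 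The repair uses the finiteness of the cover before passing to the quotient: there are finitely many triangle identities $h_{jk}\circ h_{ij}=h_{ik}$, each valid on a neighborhood of the relevant portion of $\mathcal{T}_{\mathcal{Y}}$ because both sides have the same germ there; shrink $\mathcal{T}$ fiberwise over $\mathcal{T}_{\mathcal{Y}}$ so that all of them, together with the required domain compatibilities, hold on the nose. After this shrinking every word collapses to a generator, the equivalence relation has closed graph over $B$, and smoothness and Hausdorffness of the quotient follow exactly as you indicate. With that caveat made explicit, your route is the correct one.
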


In our case, once we make sure that the lemma can be applied, the resulting quotient $\Sigma$ of $\mathcal{T}$ will produce the desired
symplectic realization. This follows from Proposition \ref{PropTrnasversal} and the fact that, by construction, $\sigma^{-1}(S)=
\mathcal{G}(A)\subset \Sigma$.

(H1) is clear since $P_x$ is smooth and compact.
For (H2), we need the following:

\begin{lemma}\label{Localy-triv}
Let $M$ be a finite dimensional manifold, $x_0\in M$, and denote by $\mathrm{Path}(M, x_0)$ the Banach manifold of $C^2$ paths in $M$ starting
at $x_0$. Then
\[ \epsilon: \mathrm{Path}(M, x_0)\rmap M, \ \gamma\mapsto \gamma(1)\]
is a locally trivial fiber bundle.
\end{lemma}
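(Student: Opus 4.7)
The plan is to build, around any $y_0 \in M$, a smooth local trivialization $\Phi: U \times \epsilon^{-1}(y_0) \xrightarrow{\sim} \epsilon^{-1}(U)$ over a neighborhood $U$ of $y_0$. The strategy is to transport endpoints from $y_0$ to nearby $y$ via a smooth family of diffeomorphisms of $M$, combined with a time-cutoff that leaves a collar of $t=0$ untouched so that the basepoint $x_0$ is automatically preserved.

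First I will construct the diffeomorphism family. Choose a chart $\psi: V \to \mathbb{R}^n$ centered at $y_0$ and a bump function $\beta: M \to [0,1]$ with compact support in $V$ and equal to $1$ on some smaller open $V_0 \ni y_0$. For $y$ in a sufficiently small neighborhood $U \subset V_0$, define a vector field $X_y$ on $M$ by $X_y := \beta \cdot \psi(y)$ in the chart (with $\psi(y) \in \mathbb{R}^n$ regarded as a constant vector field) and $X_y \equiv 0$ outside $V$. Its time-$s$ flow $\phi_y^s: M \to M$ is a diffeomorphism for all $s \in [0,1]$ (by compact support), depends smoothly on $(y,s)$, satisfies $\phi_y^0 = \mathrm{id}$, and an explicit integration in the chart yields $\phi_y^1(y_0) = y$ provided $U$ is small enough for the trajectory $s \mapsto s\psi(y)$ to remain in $V_0$.

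Then I pick a smooth cutoff $\rho: [0,1] \to [0,1]$ with $\rho \equiv 0$ on $[0,1/3]$ and $\rho(1) = 1$, and define
\[
\Phi: U \times \epsilon^{-1}(y_0) \rmap \epsilon^{-1}(U), \qquad \Phi(y,\gamma)(t) := \phi_y^{\rho(t)}(\gamma(t)),
\]
with proposed inverse $\Psi(\delta) = \bigl(\delta(1),\, \phi_{\delta(1)}^{-\rho(\cdot)} \circ \delta\bigr)$. The endpoint conditions are automatic: $\Phi(y,\gamma)(0) = \phi_y^{0}(x_0) = x_0$ and $\Phi(y,\gamma)(1) = \phi_y^{1}(y_0) = y$, and similarly for $\Psi$. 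The cutoff $\rho$ is the essential device that makes the construction uniform in the position of $x_0$; in particular it handles the case $x_0 = y_0$, where the naive prescription $\gamma \mapsto \phi_y \circ \gamma$ would fail to preserve the basepoint.

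The only genuinely delicate point is verifying that $\Phi$ and $\Psi$ are smooth as maps between Banach manifolds of $C^2$-paths. This will reduce to the standard fact that if $F: U \times [0,1] \times M \to M$ is a $C^{\infty}$ map (here $F(y,t,x) = \phi_y^{\rho(t)}(x)$), then the induced map $(y,\gamma) \mapsto \bigl[t \mapsto F(y,t,\gamma(t))\bigr]$ is smooth between the corresponding $C^2$-path spaces; this is a routine application of the $\omega$-lemma / chain rule for manifolds of maps. I expect this technical check to be the main obstacle, but nothing more: once it is in hand $\Phi$ is a diffeomorphism with inverse $\Psi$, so $\epsilon$ is locally trivial.
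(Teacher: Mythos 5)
Your proposal is correct and is essentially the paper's own argument: both construct the flow $\phi_y^s$ of a compactly supported vector field $\beta\cdot\overrightarrow{\psi(y)}$ in a chart and trivialize by $\gamma\mapsto\bigl(t\mapsto\phi_y^{\,\cdot}(\gamma(t))\bigr)$ with the same explicit inverse. The only (harmless) difference is your time-cutoff $\rho$, which is not actually needed since $\phi_y^0=\mathrm{id}$ already guarantees that the basepoint $x_0$ is preserved when one uses $\phi_y^{t}(\gamma(t))$ directly, as the paper does.
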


\begin{proof}
Consider $x\in M$. For $U\subset V$ a small enough open neighborhoods of $x\in M$, we will construct a smooth family of diffeomorphisms
\[\phi_{y,t}:M\rmap M, \mathrm{ for }\ y\in U,\  t\in\mathbb{R},\]
such that $\phi_{y,t}$ is supported inside $V$, $\phi_{y,0}=id_M$ and $\phi_{y,1}(x)=y$. Then the required trivialization over $U$ is given by
\[\tau_U:\epsilon^{-1}(x)\times U\rmap \epsilon^{-1}(U),\quad \tau_U(\gamma, y)(t)=\phi_{y,t}(\gamma(t)),\]
with inverse
\[\tau^{-1}_{U}(\gamma)(t)=(\phi^{-1}_{\gamma(1),t}(\gamma(t)),
\gamma(1)).\] The construction of such diffeomorphisms is clearly a local issue, thus we may assume that $M=\mathbb{R}^m$, with $x=0$ and
$U=B_1(0)$, $V=B_2(0)$, the balls of radii 1 and 2 respectively. Consider $f\in C^{\infty}(\mathbb{R}^m)$, supported inside $B_2(0)$, with
$f_{|B_1(0)}=1$. Let $\phi_{y,t}$ be the flow at time $t$ of the compactly supported vector field $X_y:=f\overrightarrow y$, where
$\overrightarrow y$ represents the constant vector field on $\mathbb{R}^m$ corresponding to $y\in B_1(0)$. Then it is easy to see that
$\phi_{y,t}$ satisfies all requirements.
\end{proof}

Next, for a groupoid $\mathcal{G}$ over a manifold $S$, we denote by $\mathrm{Path}^{s}(\mathcal{G}, 1)$ the Banach manifold of $C^2$-paths
$\gamma$ in $\mathcal{G}$ starting at some unit $1_x$ and satisfying $s\circ\gamma=x$. For $\mathcal{G}=\mathcal{G}(A)$, Proposition 1.1 of
\cite{CrFe1} identifies our bundle $p: \mathcal{Y}\rmap B$ with the bundle
\begin{equation}\label{map_epsilon}
\widetilde{\epsilon}: \mathrm{Path}^s(\mathcal{G}, 1)\rmap \mathcal{G}, \ \gamma\mapsto \gamma(1).
\end{equation}
Hence the following  implies (H2).

\begin{lemma} For a source locally trivial Lie groupoid $\mathcal{G}$, the map $\widetilde{\epsilon}$ (\ref{map_epsilon}) is a locally trivial fiber bundle.
\end{lemma}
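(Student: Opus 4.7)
The plan is to reduce this statement to the previous lemma on paths in a single manifold, by using source local triviality to trivialize both the source fibration and the associated path space over a chart in $\mathcal{G}$.

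I would first fix $g_0\in\mathcal{G}$ and set $x_0:=s(g_0)$. Source local triviality supplies an open $U_0\subset S$ containing $x_0$ and a diffeomorphism $\phi_0:s^{-1}(U_0)\to U_0\times s^{-1}(x_0)$ with $s=\mathrm{pr}_1\circ\phi_0$. Under $\phi_0$ the unit section reads $y\mapsto(y,\sigma(y))$ for some smooth $\sigma:U_0\to s^{-1}(x_0)$, which in general is not constant. To straighten it I would shrink $U_0$ to $U$ and apply the bump-function/flow construction from the previous lemma inside the single manifold $s^{-1}(x_0)$ at the point $1_{x_0}$: this yields a smooth family $\psi_y\in\mathrm{Diff}(s^{-1}(x_0))$ parameterized by $y\in U$ with $\psi_y(1_{x_0})=\sigma(y)$. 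Post-composing $\phi_0$ fiberwise with $\psi_{s(\cdot)}^{-1}$ then produces a unit-preserving trivialization
\[\phi:s^{-1}(U)\to U\times s^{-1}(x_0),\qquad \phi(1_y)=(y,1_{x_0})\ \text{for all }y\in U.\]

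With this $\phi$ in hand, any $\gamma\in\widetilde{\epsilon}^{-1}(s^{-1}(U))$ has $s\circ\gamma$ constant equal to some $x\in U$, so the path lies entirely in $s^{-1}(x)\subset s^{-1}(U)$, and pointwise application of $\phi$ turns it into a pair $(x,\tilde\gamma)$ with $\tilde\gamma$ a $C^2$-path in $s^{-1}(x_0)$ satisfying $\tilde\gamma(0)=1_{x_0}$. This defines a diffeomorphism of Banach manifolds
\[\Phi:\widetilde{\epsilon}^{-1}(s^{-1}(U))\;\xrightarrow{\sim}\;U\times\mathrm{Path}(s^{-1}(x_0),1_{x_0}),\]
which, combined with $\phi$ on the base, intertwines $\widetilde{\epsilon}$ with the product map $\mathrm{id}_U\times\epsilon$, where $\epsilon$ is the endpoint evaluation of the previous lemma.

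Since $\epsilon$ is a locally trivial fiber bundle by that lemma, so is $\mathrm{id}_U\times\epsilon$, and transporting this back through $\phi$ and $\Phi$ exhibits $\widetilde{\epsilon}$ as locally trivial in a neighborhood of $g_0$. Since $g_0$ was arbitrary, the claim follows. The main obstacle is the unit-preserving modification of the source trivialization; once that is secured, the rest is a direct transfer of the previous lemma.
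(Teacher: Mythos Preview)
Your proposal is correct and follows essentially the same route as the paper: reduce to the single-manifold endpoint lemma by choosing a unit-preserving source trivialization, so that $\widetilde{\epsilon}$ over $s^{-1}(U)$ becomes $\mathrm{id}_U\times\epsilon$. The only difference is in how the unit-preserving condition is secured: the paper dispatches it in one line by invoking transversality of the unit section to $s$, whereas you construct the straightening diffeomorphisms $\psi_y$ explicitly by reusing the flow from the previous lemma; both arguments are valid and lead to the same conclusion.
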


\begin{proof}
Consider $g_0\in\mathcal{G}$, $x_0=s(g_0)$. Consider a local trivialization of $s$ over $U\ni x_0$, $\tau:s^{-1}(U)\cong U\times s^{-1}(x_0)$.
Since the unit map is transversal to $s$, we may assume that $\tau(1_x)=(x,1_{x_0})$ for all
$x\in U$. Left composing with $\tau$ induces a diffeomorphism $\tau_{*}:\widetilde{\epsilon}^{-1}(s^{-1}(U))\rmap U\times
\mathrm{Path}(s^{-1}(x_0),1_{x_0})$, under which $\widetilde{\epsilon}$ becomes
\[Id\times \epsilon: U\times\mathrm{Path}(s^{-1}(x_0),1_{x_0})\rmap U\times s^{-1}(x_0).\]
Since $\epsilon$ is trivial over $V\subset s^{-1}(x_0)$ around $g_0$, $\widetilde{\epsilon}$ is
trivial over $\tau^{-1}(U\times V)$.
\end{proof}

We still have to check (H0).

\begin{lemma} \label{lemma-Getzler1} For any leaf $\mathcal{L}$ of $\mathcal{F}$ inside $\mathcal{Y}$,
$\pi_1(\mathcal{L})\cong \pi_2(P_x)$.
\end{lemma}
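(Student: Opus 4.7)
My plan is to identify the leaf $\mathcal{L}$ concretely as a path space in $P_x$ and then apply the standard fact that $\pi_{n}(\Omega X) \cong \pi_{n+1}(X)$.

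First I would recall that, by construction, $\mathcal{Y} = \tilde{s}^{-1}(S)$ is the Banach manifold $P(A)$ of $A$-paths of the restricted algebroid $A = A_S = T^*M|_S$, the foliation $\mathcal{F}_{\mathcal{Y}}$ is $A$-homotopy, and the quotient $B = \mathcal{Y}/\mathcal{F}_{\mathcal{Y}}$ is the groupoid $\mathcal{G}(A)$, which by hypothesis is the smooth compact gauge groupoid associated to $P_x$. Hence each leaf $\mathcal{L}$ is a fiber $p^{-1}(g)$ of the submersion $p : \mathcal{Y} \rmap B$. Now I would invoke Proposition 1.1 of \cite{CrFe1}, which identifies this submersion with the map $\widetilde{\epsilon}$ of (\ref{map_epsilon}) for $\mathcal{G} = \mathcal{G}(A)$, i.e.\ the map sending a $C^2$ path $\gamma$ in $\mathcal{G}$ (with $\gamma(0) = 1_{s(g)}$ and $s\circ\gamma$ constant) to its endpoint $\gamma(1)$. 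Writing $x = s(g)$, the source fiber $s^{-1}(x)$ is precisely $P_x$, so
\[
  \mathcal{L} \;\cong\; \{\,\gamma \in C^2([0,1], P_x) \,:\, \gamma(0) = 1_x,\ \gamma(1) = g\,\}.
\]

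Next, since $P_x$ is smooth and $1$-connected (it is the 1-connected integration of the Atiyah sequence by Proposition \ref{smoothness-all-in-one}), I can choose any $C^2$ path $\eta$ in $P_x$ from $g$ back to $1_x$; concatenation with $\eta$ gives a homotopy equivalence from $\mathcal{L}$ to the based loop space $\Omega(P_x, 1_x)$. Equivalently, $\mathcal{L}$ is the homotopy fiber over $g$ of the path-space fibration $P(P_x, 1_x) \rmap P_x$, $\gamma \mapsto \gamma(1)$, whose total space is contractible; the long exact sequence then gives $\pi_1(\mathcal{L}) \cong \pi_1(\Omega P_x) \cong \pi_2(P_x)$ as desired. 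The only technical point worth a brief comment is the passage between $C^2$ paths and continuous paths, but this is harmless because the inclusion of $C^2$ paths into continuous paths (with fixed endpoints) is a homotopy equivalence (e.g.\ by mollification), so the weak homotopy type of $\mathcal{L}$ computed in either category is the same.

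I do not expect any real obstacle here; the content of the statement is essentially the identification of a leaf of the $A$-homotopy foliation with a based path space in the source fiber, combined with the loop-space shift in homotopy. The only thing one must be careful about is that the fixed-endpoint path space is being taken inside $P_x$ and not inside all of $\mathcal{G}(A)$, which is exactly why the hypotheses (smoothness of $P_x$, hence of $\mathcal{G}(A) = (P_x \times P_x)/G_x$, and $1$-connectedness of $P_x$) are used.
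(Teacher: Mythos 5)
Your proposal is correct and follows essentially the same route as the paper: identify the leaf with a fiber of the endpoint fibration $\mathrm{Path}(P_y,1_y)\rmap P_y$ over the source fiber (via Proposition 1.1 of \cite{CrFe1} and Lemma \ref{Localy-triv}), then use contractibility of the based path space and the long exact sequence to get the loop-space shift $\pi_1(\mathcal{L})\cong\pi_2(P_y)\cong\pi_2(P_x)$. The only cosmetic difference is that the paper keeps $y=s(g)$ distinct from the fixed base point $x$ and invokes transitivity of $\mathcal{G}(A)$ to identify $P_y$ with $P_x$ at the end, whereas you relabel from the start.
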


\begin{proof} The foliation $\mathcal{F}_{\mathcal{Y}}$ is given by the fibers of $p:\mathcal{Y}\rmap B$, which, as remarked before, is
isomorphic to the bundle $\widetilde{\epsilon}:\mathrm{Path}^s(\mathcal{G}, 1_y)\rmap \mathcal{G}(A)$. So, a leaf $\mathcal{L}$ will be
identified with $\widetilde{\epsilon}^{-1}(g)$, for some $g\in \mathcal{G}(A)$. Denote by $y:=s(g)$ and $P_y:=s^{-1}(y)$. Then $\mathcal{L}$ is
a fiber of $\epsilon:\mathrm{Path}(P_y,y)\rmap P_y$, which, by Lemma \ref{Localy-triv}, is a locally trivial fiber bundle. Since
$\mathrm{Path}(P_y, 1_y)$ is contractible, by a standard argument, we find that $\pi_1(\mathcal{L})\cong \pi_2(P_y)$. Since $\mathcal{G}(A)$ is
transitive, $P_y$ and $P_x$ are diffeomorphic.
\end{proof}

Of course, if $\pi_2(P_x)$ were assumed to be trivial, then condition (H0) follows automatically. Note that the hypothesis
that $H^2(P_x)= 0$ is equivalent to $\pi_2(P_x)$ being finite; we show that this is enough to ensure triviality of the holonomy groups.

\begin{lemma} \label{lemma-Getzler2} The holonomy group of the foliated manifold $(\mathcal{X}, \mathcal{F})$ is trivial at any point
$a\in \mathcal{Y}$.
\end{lemma}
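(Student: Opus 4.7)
My strategy would be to combine three ingredients: (i) compute $\pi_1(\mathcal{L}_a)$ and show it is finite using the hypothesis $H^2(P_x;\mathbb{R})=0$; (ii) promote the holonomy action to an action by symplectomorphisms on a transversal and use triviality of the holonomy of the sub-foliation $\mathcal{F}_{\mathcal{Y}}$ to see the action is the identity on a subtransversal; (iii) linearize via Bochner and conclude.

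First, I would apply Lemma \ref{lemma-Getzler1} to get $\pi_1(\mathcal{L}_a)\cong\pi_2(P_x)$. Since $P_x$ is compact and $1$-connected, the Hurewicz theorem identifies $\pi_2(P_x)\cong H_2(P_x;\mathbb{Z})$; by the universal coefficient theorem, the free rank of $H_2(P_x;\mathbb{Z})$ equals $\dim H^2(P_x;\mathbb{R})=0$, so the finitely generated abelian group $H_2(P_x;\mathbb{Z})$ is finite. The holonomy group $H:=\mathrm{Hol}(\mathcal{F},a)$, being a quotient of $\pi_1(\mathcal{L}_a)$, is therefore finite.

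Next, using Proposition \ref{technical} and the $\mathcal{F}$-saturatedness of $\mathcal{Y}$, I would pick a transversal $\mathcal{T}$ to $\mathcal{F}$ at $a$ such that $\mathcal{T}_{\mathcal{Y}}:=\mathcal{T}\cap\mathcal{Y}$ is a transversal to $\mathcal{F}_{\mathcal{Y}}$ at $a$ inside $\mathcal{Y}$. By Proposition \ref{PropTrnasversal}(a), $\Omega|_{\mathcal{T}}$ is symplectic and invariant under $\mathcal{F}$-holonomy, so $H$ acts on the germ $(\mathcal{T},a)$ by symplectomorphisms. Moreover, $\mathcal{F}_{\mathcal{Y}}$ is the fiber foliation of the submersion $p:\mathcal{Y}\to\mathcal{G}(A)$ onto the smooth Hausdorff groupoid $\mathcal{G}(A)$ (hypothesis (H1), which was verified above), so $\mathcal{F}_{\mathcal{Y}}$ has trivial holonomy. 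Consequently every element of $H$ fixes $\mathcal{T}_{\mathcal{Y}}$ pointwise near $a$.

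Finally, I would invoke Bochner's linearization theorem to replace the finite action on $\mathcal{T}$ by a linear symplectic action on $(T_a\mathcal{T},\Omega_a)$ that is the identity on $T_a\mathcal{T}_{\mathcal{Y}}$. The problem is then reduced to showing that a finite subgroup of the linear symplectic group of $(T_a\mathcal{T},\Omega_a)$ that fixes $T_a\mathcal{T}_{\mathcal{Y}}$ pointwise must be trivial. I plan to derive this from the symplectic orthogonality $(\ker d\sigma_a)^{\perp}=\ker d\tau_a$ of Proposition \ref{PropTrnasversal}(c) and the identification $T_a\mathcal{T}_{\mathcal{Y}}=(d\sigma_a)^{-1}(T_xS)$, which together show that $T_a\mathcal{T}_{\mathcal{Y}}$ is coisotropic. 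Using the explicit formula for $\Omega$ of Lemma \ref{Omega_Formula_simpla}, the symplectic reduction of $T_a\mathcal{T}$ along $T_a\mathcal{T}_{\mathcal{Y}}$ can be identified with a finite-dimensional symplectic space of ``transverse variations'' to $S$ in $M$, on which the induced $H$-action should be trivial because such variations can be realized by lifts of paths normal to $S$ that are manifestly invariant under cotangent homotopy.

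The main obstacle is precisely this last step: carrying out the symplectic reduction explicitly and verifying that the induced finite symplectic action on the reduced space is trivial. The delicate point is to exploit both the symplectic orthogonality relations and the fact that $H$ is generated by monodromy along loops in $\mathcal{L}_a$, which correspond to spheres in $P_x$ representing elements of the finite group $\pi_2(P_x)$; one must see that such classes cannot act nontrivially on the transverse symplectic structure.
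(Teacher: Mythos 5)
Your first two steps match the paper: finiteness of the holonomy group $\Gamma$ via Lemma \ref{lemma-Getzler1} and $H^2(P_x;\mathbb{R})=0\Leftrightarrow \pi_2(P_x)$ finite, the action of $\Gamma$ on a small transversal $\mathcal{T}$, its triviality on $\mathcal{T}_{\mathcal{Y}}$ (since $\mathcal{F}_{\mathcal{Y}}$ is a fibration foliation), and the reduction, via finiteness, to the linearized action on $T_a\mathcal{T}$. The gap is in your last step, and it is a real one: showing that the induced action on the symplectic reduction $W/W^{\perp}$ of $W=T_a\mathcal{T}_{\mathcal{Y}}$ is trivial cannot possibly close the argument, because that triviality is automatic (any map fixing $W$ pointwise and preserving $\Omega_a$ acts trivially on $W/W^{\perp}$) and says nothing about the action in the directions transverse to $W$ --- which is exactly what you need to control. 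As stated, your plan for the decisive step is vacuous rather than merely incomplete.

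Two ways to close it. The paper's way uses no symplectic geometry at the end: it observes that (C2) the source map $\sigma=\tilde{s}|_{\mathcal{T}}$ is $\Gamma$-invariant (holonomy moves a path within its cotangent-homotopy class, hence preserves its starting point), so $\Gamma$ acts trivially on the quotient $T_a\mathcal{T}/\ker(d\sigma)_a\cong T_xM$; and (C3) $\sigma^{-1}(\sigma(a))=\sigma^{-1}(x)\subset\tilde{s}^{-1}(S)\cap\mathcal{T}=\mathcal{T}_{\mathcal{Y}}$, so $\ker(d\sigma)_a\subset T_a\mathcal{T}_{\mathcal{Y}}$ and $\Gamma$ acts trivially on the sub. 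Averaging over the finite group gives an equivariant splitting of $\ker(d\sigma)_a\to T_a\mathcal{T}\to T_xM$, hence a trivial action on the middle. Alternatively, your own reduction can be salvaged: the statement ``a finite-order linear symplectomorphism fixing a coisotropic subspace $W$ pointwise is the identity'' is true, but by unipotency, not by reduction --- for $w\in W$ one has $\Omega_a(Av-v,w)=\Omega_a(Av,Aw)-\Omega_a(v,w)=0$, so $(A-I)(V)\subset W^{\perp}\subset W=\ker(A-I)$, whence $(A-I)^2=0$, $A^k=I+k(A-I)$, and finite order forces $A=I$. Either route completes the proof; the one you wrote down does not.
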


\begin{proof} Let $a\in \mathcal{Y}$ and let $\Gamma$ be the holonomy group at $a$.
Let $\mathcal{T}$ be a transversal of $(\mathcal{X}, \mathcal{F})$ through $a$. Since $\Gamma$ is finite, $\mathcal{T}$ can be chosen small
enough so that the holonomy transformations $\mathrm{hol}_{u}$ define an action of $\Gamma$ on $\mathcal{T}$. Denote by
$\mathcal{T}_{\mathcal{Y}}:= \mathcal{T}\cap \mathcal{Y}$. Since the holonomy of $(\mathcal{Y}, \mathcal{F}_{\mathcal{Y}})$ is trivial, by
making $\mathcal{T}$ smaller, we may assume:
\begin{enumerate}
\item[C1:] the action of $\Gamma$ on $\mathcal{T}_{\mathcal{Y}}$ is trivial.
\end{enumerate}
Note also that the submersion $\sigma: \mathcal{T}\rmap M$ satisfies:
\begin{enumerate}
\item[C2:] $\sigma$ is $\Gamma$-invariant.
\item[C3:] $\sigma^{-1}(\sigma(a))\subset \mathcal{T}_{\mathcal{Y}}$.
\end{enumerate}
C1 is clear. For C2: $a'$ and $\mathrm{hol}_{u}(a')$ are always in the same leaf, i.e. cotangent-homotopic, for $a'\in \mathcal{T}$, hence the starting point
is the same. We have to show that the action of $\Gamma$ is trivial in a neighborhood of $a$ in $\mathcal{T}$. Since
$\Gamma$ is finite, it suffices to show that the induced infinitesimal action of $\Gamma$ on $T_{a}\mathcal{T}$ is trivial. %(... by the local form for compact group actions).
At the infinitesimal level, we have a short exact sequence
\[ \ker(d\sigma)_a\stackrel{i}{\rmap} T_{a}\mathcal{T} \stackrel{(d\sigma)_a}{\rmap} T_xM.\]
This is a sequence of $\Gamma$-modules, where $\Gamma$ acts trivially on the first and the last term. For the last map, this follows from C2.
For the first map, C3 implies that $\ker(d\sigma)_a\subset T_a\mathcal{T}_{\mathcal{Y}}$ on which $\Gamma$ acts trivially by C1. Since $\Gamma$
is finite, the action on the middle term must be trivial as well (use e.g. an equivariant splitting).
\end{proof}

\bibliographystyle{amsplain}

\begin{thebibliography}{11}


\bibitem{Almeida} R.~Almeida, P.~Molino, Suites d'Atiyah et feuilletages transversalement complets,
\emph{C.~R.~Acad.~Sci.~Paris~S\'er.~I~Math.} \textbf{300} (1985), no. 1, 13--15.

%\bibitem{Atiyah} M.~F.~Atiyah, Complex analytic connections in fibre bundles,
%Trans. Amer. Math. Soc. \textbf{85} (1957), 181--207.

\bibitem{FernandesBrahic} O.~Brahic, R.~L.~Fernandes,
   Poisson fibrations and fibered symplectic groupoids,
   Poisson geometry in mathematics and physics,
   \emph{Contemp. Math.}, \textbf{450}  (2008), 41--59.


\bibitem{BR} H.~Bursztyn, O.~Radko,
   Gauge equivalence of Dirac structures and symplectic groupoids,
   \emph{Ann.~Inst.~Fourier} (Grenoble)  \textbf{53}  (2003),  no. 1, 309--337.



\bibitem{CaFe} A.~Cattaneo, G.~Felder,
   Poisson sigma models and symplectic groupoids,
   Quantization of singular symplectic quotients,
   \emph{Progr.~Math.}, \textbf{198}, 61--93, Birkhauser, Basel, 2001.


\bibitem{Conn} J.~Conn, Normal forms for smooth Poisson structures,
  \emph{Ann.~of Math.} \textbf{121} (1985), 565--593.



\bibitem{Cra} M.~Crainic,
   Differentiable and algebroid cohomology, van Est isomorphisms, and characteristic classes,
   \emph{Comment.~Math.~Helv.~}\textbf{78} (2003), no. 4, 681--721.


\bibitem{CrFe1} M.~Crainic, R.~L.~Fernandes,
   Integrability of Lie brackets,
   \emph{Ann.~of Math.} \textbf{157} (2003), 575--620.

\bibitem{CrFe2} M.~Crainic, R.~L.~Fernandes,
   Integrability of Poisson brackets,
   \emph{JDG}~\textbf{66} (2004), 71--137.

\bibitem{CrFe0} M.~Crainic, R.~L.~Fernandes,
   Rigidity and flexibility in Poisson geometry,
   \emph{Trav. Math.}~\textbf{XVI}, Univ. Luxemb. (2005), 53--68.


\bibitem{CrFe-stab} M.~Crainic, R.~L.~Fernandes,
   Stability of symplectic leaves,
     \emph{Inv.~Math.~}\textbf{180} (2010), 481--533.


\bibitem{CrFe-Conn} M.~Crainic, R.~L.~Fernandes,
   A geometric approach to Conn's Linearization Theorem,
  \emph{Ann.~of Math.~(2)} \textbf{173} (2011), no. 2, 1121--1139.

\bibitem{WadeLent} B.~L.~Davis, A.~Wade,
   Nonlinearizability of certain Poisson structures near a symplectic leaf,
   \emph{Trav. Math.}, \textbf{XVI} (2005), 69--85.






%\bibitem{Cruz} I.~Cruz, T.~Fardilha,
%   Linearity of the transverse {P}oisson structure to a coadjoint orbit,
 %  \emph{Lett.~Math.~Phys.}~\textbf{65} (2003), no. 3, 213--227.

%\bibitem{CuRo} R.~Cushman, M.~Roberts,
%   Poisson structures transverse to coadjoint orbits,
%   \emph{Bull.~Sci.~Math.}~\textbf{126} (2002), 525-534.

%\bibitem{Dazord} P.~Dazord,
%   Stabilit\'e et lin\'earisation dans les vari\'et\'es de Poisson,
%   \emph{in} Symplectic geometry and mechanics (Balaruc, 1983), 59--75,
%   Travaux en Cours, Hermann, Paris, 1985.


%\bibitem{Dazord-Hector} P.~Dazord, G.~Hector,
%   Int\'egration symplectique des vari\'et\'es de Poisson totalement asph\'eriques,
%   in ``Symplectic geometry, groupoids, and integrable systems'' (Berkeley, CA, 1989), 37--72,
%   Math. Sci. Res. Inst. Publ., 20, Springer, New York, 1991.

\bibitem{DH} J.~J.~Duistermaat, G.~J.~Heckman,
On the variation in the cohomology of the symplectic form of the reduced phase space,
\emph{Inv.~Math.~}\textbf{69} (1982), no.2, 259--268.


\bibitem{DK} J.~J.~Duistermaat, J.~Kolk,
   \emph{Lie groups},
   Universitext, Springer-Verlag Berlin, 2000.


\bibitem{Fernandes}R.~L.~Fernandes,
Connections in Poisson geometry. I. Holonomy and invariants.
\emph{JDG} \textbf{54} (2000), no. 2, 303--365.

\bibitem{FeOrRa}R.~L.~Fernandes, J.~P.~Ortega, T.~Ratiu,
   The momentum map in Poisson geometry,
   \emph{Amer.~J.~Math.} \textbf{131} (2009), no. 5, 1261--1310.

\bibitem{SymplecticFibrations} V.~Guillemin, E.~Lerman, S.~Sternberg,
   \emph{Symplectic fibrations and multiplicity diagrams},
  Cambridge University Press, 1996.

\bibitem{GS} V.~Guillemin, S.~Sternberg,
   \emph{Symplectic techniques in physics},
   Cambridge Univ. Press, 1984.




\bibitem{Kumpera-Spencer} A.~Kumpera, D.~Spencer,
\emph{Lie equations. {V}ol. {I}: {G}eneral theory},
 Annals of Mathematics Studies, No. 73.
 Princeton University Press, Princeton, N.J. (1972).

\bibitem{Mackenzie} K.~Mackenzie,
   Integrability obstructions for extensions of Lie algebroids,
   \emph{Cahiers Topologie G\'eom. Diff\'erentielle Cat\'eg.} \textbf{28} (1987), no. 1, 29--52.

\bibitem{MackenzieLL} K.~Mackenzie,
 \emph{Lie groupoids and {L}ie algebroids in differential geometry},
  London Math. Soc. Lecture Note Ser. 124, Cambridge Univ. Press (1987).

\bibitem{MackXu} K.~Mackenzie, P.~Xu,
  Integration of {L}ie bialgebroids,
  \emph{Topology} \textbf{39} (2000), no. 3, 445--467.

\bibitem{teza} I.~M\v arcu\c t,
   \emph{Normal forms in Poisson geometry},
    PhD Thesis, Utrecht University, in progress.

\bibitem{MW} K.~Mikami, A.~Weinstein,
  Moments and reduction for symplectic groupoids,
  \emph{Publ. Res. Inst. Math. Sci. } \textbf{24} (1988), no. 1, 121--140.

\bibitem{MM} I.~Moerdijk, J.~Mrcun,
   \emph{Introduction to foliations and Lie groupoids},
   Cambridge Studies in Advanced Mathematics, 91. Cambridge University Press, 2003.


\bibitem{Montgomery} R.~Montgomery,
   \emph{The Bundle Picture in Mechanics},
   PhD Thesis, UC Berkeley 1986, available at
http://count.ucsc.edu/$\sim$rmont/papers/thesis.pdf



%\bibitem{Ratiu} R.~Montgomery, J.E.~Marsden, T.S.~Ra\c tiu,
%Gauged {L}ie-{P}oisson structures, \emph{Cont. Math. AMS}
%\textbf{128} (1984), 101--114.


\bibitem{Vorobjev} Y.~Vorobjev,
   Coupling tensors and Poisson geometry near a single symplectic leaf,
   \emph{Banach Center Publ.} \textbf{54} (2001), 249--274.


\bibitem{Vorobjev2} Y.~Vorobjev,
  Poisson equivalence over a symplectic leaf,
   Quantum algebras and {P}oisson geometry in mathematical physics,
    \emph{Amer. Math. Soc. Transl. Ser. 2}, \textbf{216} (2005), 241--277.


\bibitem{Wade} A.~Wade,
   Poisson fiber bundles and coupling Dirac structures,
   \emph{Ann. Global Anal. Geom.} \textbf{33} (2008), 207--217.



\bibitem{Wein} A.~Weinstein,
   The local structure of Poisson manifolds,
   \emph{JDG} \textbf{18} (1983), 523--557. Errata and addenda: in
   \emph{JDG} \textbf{22} (1985), 255.

%\bibitem{WeinE} A.~Weinstein,
%   Errata and addenda: "The local structure of Poisson manifolds",
%   \emph{JDG} \textbf{22} (1985), 255.

%\bibitem{Wein2} A.~Weinstein,
%   Linearization problems for Lie algebroids and Lie groupoids,
%   \emph{Lett.~Math. Phys.~}\textbf{52} (2000), 93--102.


%\bibitem{Wein3} A.~Weinstein,
%   Linearization of regular proper groupoids,
%   \emph{J.~Inst.~Math.~Jussieu} \textbf{1} (2002), no.3, 493--511.

\bibitem{WeinXu} A.~Weinstein, P.~Xu,
   Extensions of symplectic groupoids and quantization,
   \emph{J. Reine Angew. Math.}, \textbf{417} (1991), 159--189.

\bibitem{Zung} N.T.~Zung,
   Proper Groupoids and Momentum Maps: Linearization, Affinity and Convexity,
   \emph{Ann. Sci. \'Ec. Norm. Sup\'er., S\'er. 4}, \textbf{39}  (2006), no.5, 841--869.

\end{thebibliography}
\def\lllll{}

\end{document}